\documentclass[12pt]{siamart190516}
\usepackage[margin = 3cm]{geometry}
\usepackage[utf8]{inputenc} 
\usepackage[T1]{fontenc}    
\usepackage{url}            
\usepackage{hyperref}
\usepackage{booktabs}       
\usepackage{amsfonts}       
\usepackage{epstopdf}
\usepackage{nicefrac}       
\usepackage{microtype}      
\usepackage{lipsum}
\usepackage{amsmath,amssymb,mathrsfs}
\usepackage{bbm}
\usepackage{tcolorbox}
\usepackage{enumitem}
\usepackage{mathtools}
\usepackage{latexsym}
\usepackage{multimedia}
\usepackage{media9}
\usepackage{relsize}
\usepackage{algorithm}
\usepackage{algorithmic}
\usepackage{lineno}
\usepackage{tikz}
\usetikzlibrary{arrows.meta}
\usepackage{comment}
\usepackage{multicol}
\usepackage{graphicx}
\usepackage{subfig}
\usepackage{xcolor}
\usepackage{cleveref}
\usepackage{todonotes}
\usepackage{setspace}


\DeclareMathOperator*{\argmin}{argmin}

\newcommand{\bq}{\begin{equation}}
\newcommand{\eq}{\end{equation}}
\newcommand{\R}{\mathbb{R}}

\newcommand{\Prob}{\mathbb{P}}

\newcommand{\red}[1]{{\color{red}{#1}}}

\newcommand*\Laplace{\mathop{}\!\mathbin\bigtriangleup}



\newcommand{\dlim}{\displaystyle\lim}
\newcommand{\dint}{\displaystyle\int}

\newcommand{\eps}{\varepsilon}





\newcommand{\bbP}{\mathbb P} 
\newcommand{\bbR}{\mathbb R} \newcommand{\bbS}{\mathbb S}


\newcommand{\bzeta}{\boldsymbol \zeta}

\newcommand{\be}{\mathbf e}

 \newcommand{\bx}{\mathbf x} 
  
\newcommand{\bA}{\mathbf A}


\newcommand{\cA}{\mathcal A} \newcommand{\cB}{\mathcal B}
\newcommand{\cC}{\mathcal C}

 \newcommand{\cL}{\mathcal L}





\newcommand{\wt}{\widetilde}
\newcommand{\wh}{\widehat}

\newcommand*\patchAmsMathEnvironmentForLineno[1]{%
\expandafter\let\csname old#1\expandafter\endcsname\csname #1\endcsname
\expandafter\let\csname oldend#1\expandafter\endcsname\csname end#1\endcsname
\renewenvironment{#1}%
{\linenomath\csname old#1\endcsname}%
{\csname oldend#1\endcsname\endlinenomath}}%
\newcommand*\patchBothAmsMathEnvironmentsForLineno[1]{%
\patchAmsMathEnvironmentForLineno{#1}%
\patchAmsMathEnvironmentForLineno{#1*}}%
\patchBothAmsMathEnvironmentsForLineno{equation}%
\patchBothAmsMathEnvironmentsForLineno{align}%
\patchBothAmsMathEnvironmentsForLineno{flalign}%
\patchBothAmsMathEnvironmentsForLineno{alignat}%
\patchBothAmsMathEnvironmentsForLineno{gather}%
\patchBothAmsMathEnvironmentsForLineno{multline}%

\newsiamremark{remark}{Remark}

\graphicspath{{Figures/}}

\ifpdf
  \DeclareGraphicsExtensions{.eps,.pdf,.png,.jpg}
\else
  \DeclareGraphicsExtensions{.eps}
\fi


\newcommand{\TheShortTitle}{AdaVar for Global Optimization}
\newcommand{\TheAuthors}{B.\ Engquist, K.\ Ren and Y.\ Yang}



\headers{\TheShortTitle}{\TheAuthors}


\title{Adaptive State-Dependent Diffusion for derivative-free optimization}
\author{
  Bj\"orn Engquist\thanks{Department of Mathematics and the Oden Institute, The University of Texas, Austin, TX 78712;
  \texttt{engquist@oden.utexas.edu}} 
   \and Kui Ren\thanks{Department of Applied Physics and Applied Mathematics, Columbia University, New York, NY 10027; \texttt{kr2002@columbia.edu}}
  \and Yunan Yang\thanks{Institute for Theoretical Studies, ETH Z\"urich, Z\"urich, Switzerland 8092; 
 \texttt{yunan.yang@eth-its.ethz.ch}} 
}
\ifpdf
\hypersetup{
  pdftitle={\TheShortTitle},
  pdfauthor={\TheAuthors}
}
\fi

\begin{document}
\maketitle

\tableofcontents

\begin{abstract}
This paper develops and analyzes a stochastic derivative-free optimization strategy. A key
feature is the state-dependent adaptive variance. We prove global convergence in probability with algebraic rate and give the quantitative results in numerical examples. A striking fact is that convergence is achieved without explicit information of the gradient and even without comparing different objective function values as in established methods such as the simplex method and simulated annealing. It can otherwise be compared to annealing with state-dependent temperature.
\end{abstract}

\begin{keywords}
derivative-free optimization, global optimization, adaptive diffusion, stationary distribution, Fokker--Planck theory
\end{keywords}

\begin{AMS}
90C26, 90C15, 65K05
\end{AMS}


\section{Introduction}

The idea of using randomness to achieve global convergence in numerical optimization algorithms has been extensively explored. Different stochastic mechanisms have been developed in the literature based on time-dependent diffusion~\cite{GeHw-SIAM86,chiang1987diffusion,henderson2003theory,chow2013global,FoKlRi-arXiv21,frederick2022collective}. In~\cite{EnReYa-arXiv22}, we introduced a stochastic gradient descent method for global optimization with a time- and state-dependent variance. Through rigorous analysis of the discrete algorithm and several numerical examples, we demonstrated the global convergence of the algorithm under mild assumptions on the objective function. In this paper, we improve the result in~\cite{EnReYa-arXiv22} by considering a derivative-free version of the algorithm. We will prove that the new algorithm can still achieve global convergence by using a single particle performing Brownian motion where the diffusion coefficient is monotone with respect to the objective function.

To describe the algorithm, let $\Omega\subset\bbR^d$ ($d\ge 1$) be a smooth bounded domain and $f(\bx): \Omega \mapsto \bbR$ a sufficiently regular objective function. We are interested in finding the global minima of $f$ using iterative schemes of the form
\begin{equation}\label{EQ:Adaptive A}
    X_{n+1}=X_n + \sqrt{\eta}\, \sigma\big(f(X_n)\big)\, \bzeta_n,\ \ \ n\ge 0
\end{equation}
where $\{\bzeta_n\}_{n\ge 0}$ are i.i.d.~standard normal random vectors, $\eta>0$ is the step size, and $\sigma$ controls the variance of the randomness.  Following our previous work~\cite{EnReYa-arXiv22}, we consider adaptive schemes for selecting function-dependent $\sigma$ values for the iteration. To mimic the classical diffusion setup~\cite{GeHw-SIAM86}, and also to regularize the degeneracy as done in the literature~\cite{Oleinik-MS66,StVa-Book97}, we introduce a regularization $\eps(t)>0$ with the property that $\eps(t)\to 0$ as $t\to\infty$, and define the regularized diffusion coefficient $\sigma = \sigma_\eps$ as
\begin{equation}\label{EQ:Sigma}
    \sigma_\eps (f)=\sqrt{2\Big[ \left(f(\bx)-f_{\min}^* \right)^{+}\Big]^\beta +\eps(t) }\,,
\end{equation}
where the exponent $\beta\ge d/2$ ($d$ being the dimension of the underlying space), $a^+:=\max(a, 0)$, and $f_{\min}^*$ is an approximation to $f_{\min}$, the minimum value of the function $f(\bx)$ on $\Omega$ defined by $f_{\min}:=\min_{\bx\in\overline\Omega} f(\bx)$. When $f_{\min}$ is known \emph{a priori}, we take $f_{\min}^*=f_{\min}$. When $f_{\min}$ is not known, we select $f_{\min}^*$ in other ways (which we will describe in more detail later in the section on numerical simulations). For instance, one choice that has been explored a little bit in~\cite{EnReYa-arXiv22} and will be investigated more later is the case when $f_{\min}^*$ is taken as the minimum value of $f$ in part of the history of the iteration. That is,
\begin{equation}
    f_{\min}^*:=\min_{n-1-m\le k\le n-1} f(X_k),\ \ 1\le m\le n-1\,.
\end{equation}
Without loss of generality, we assume that $\Omega$ is a $d$-dimensional cube with edge length $\ell_\Omega$, and consider the iteration with periodic boundary condition
\begin{equation}\label{EQ:Periodicity}
    X_{n}+\ell_\Omega \be_i=X_{n}, \ \ \ \ \forall n\ge 0,\ \ 1\le i\le d,
\end{equation}
with $\be_i$ being the unit vector in direction $i$. We assume that $f$ is periodically extended to $\bbR^d$ to satisfy $f(\bx+\ell_\Omega \be_i)=f(\bx)$, $\forall 1\le i\le d$.

The scheme~\eqref{EQ:Adaptive A} is a derivative-free stochastic iteration, as it does not explicitly involve the derivative of the objective function $f$. In the rest of this work, we will show that algorithm~\eqref{EQ:Adaptive A}, with appropriately selected $\eps(t)$, on a continuous level and under reasonable assumptions, can be globally convergent with an algebraic rate. To be more specific, we show a probability result of the form: 
\[
 \bbP \left(|X_t - \bx_*| > t^{-\nu} \right)   \lesssim t^{-\kappa'}, \quad \beta>d/2,
\]
for some $\nu,\kappa'>0$; see more details in Theorem~\ref{thm:main} and Corollary~\ref{coro:convergence in p}.
We will also provide some numerical examples in applications to show its practical relevance; see Sections~\ref{sec:numerics} and~\ref{SEC:Gen}. Moreover, while our primary focus is to study the derivative-free algorithm~\eqref{EQ:Adaptive A}, we will see that adding explicit gradient information to the algorithm will significantly accelerate its convergence; see Figure~\ref{fig:2D gradient comp}.

There are many effective derivative-free methods in the literature for optimization~\cite{CaRo-MP22,CoScVi-Book09,KoLeTo-SIAM03,LaMeWi-AN19}. While it is impossible to have an xhaustive list of successive methods in this direction, let us mention, as examples, the Nelder--Mead (NM) method~\cite{NeMe-CJ65,McKinnon-SIAM99,LaReWrWr-SIAM98}, which performs a direct search in the parameter space using function value comparison, the genetic algorithm (GA)~\cite{HaHa-Book04,Mitchell-Book98,Holland-SA92}, simulated annealing (SA)~\cite{chiang1987diffusion,GeHw-SIAM86,dekkers1991global,henderson2003theory,kirkpatrick1983optimization}, the particle swarm optimization (PSO) method~\cite{PoKeBl-SI07,ShEb-IEEE98}, and the consensus-based optimization method~\cite{FoKlRi-arXiv21,totzeck2021trends}. Different variations of such methods have been proposed to solve problems with different features. Interested readers are referred to~\cite{AlAuGhKoLe-EJCO21,CoScVi-Book09,KoLeTo-SIAM03} and references therein for an overview of some of the recent developments in the field. Let us emphasize that some of the methods mentioned above aim at local optimization, and most of them include gradient information implicitly, for instance, by utilizing the difference of objective function values at two different points of the parameter space in the design of the algorithms. However, the scheme~\eqref{EQ:Adaptive A} does not involve such gradient information in its form. It can be seen as a variant of the Brownian motion where the diffusion coefficient is chosen to depend on the current value of the objective function.



The rest of the paper is structured as follows. In~\Cref{SEC:Theory}, we present a convergence theory for the algorithm in the continuous limit. We use a regularized version of~\eqref{EQ:Sigma} and assume the value of the global minimum of the objective function is known. In~\Cref{sec:numerics}, we provide numerical simulations to validate this convergence result. We discuss the algorithm in more practical settings in~\Cref{SEC:Gen} for cases where the gradient information can be added and the objective function value at the global minimum is unknown a priori. We also point out in~\Cref{SEC:compare} the close connection as well as main differences of scheme~\eqref{EQ:Adaptive A} to our previous work of~\cite{EnReYa-arXiv22}. Concluding remarks are presented in~\Cref{SEC:Concl}.

\section{Asymptotic Behavior via the Fokker--Planck Equation}
\label{SEC:Theory}

We are interested in obtaining a systematic understanding of the algorithm~\eqref{EQ:Adaptive A}.  As a starting point, we will analyze this iterative scheme in the continuous limit (whose existence we formally assume, for instance, when $\eta\to 0$ at a proper rate). The iteration is described by the stochastic differential equation (SDE)
\begin{equation}\label{EQ:Adaptive A Cont}
    dX_t = \sigma(f)\, d W_t\,,
\end{equation}
where $\{W_t\}_{t\ge 0}$ is a standard $d$-dimensional Brownian motion. We formally introduce the generator $\cL$ of the process $\{X_t\}_{t\ge 0}$ as
\begin{equation}\label{eq:generator}
    \cL \rho :=\frac{1}{2}\sigma^2 \Delta\rho,\ \ \rho\in \cC_{\rm per}^2(\bbR^d)\,,
\end{equation}
where $\Delta$ is the  standard Laplacian operator in dimension $d$ and the subscript ``${\rm per}$'' in $\cC_{\rm per}^2(\bbR^d)$ is used to reflect the fact that functions in the space are $\Omega$-periodic. Then the Fokker--Planck equation for the distribution $u(x, t)$ of the process is of the form
\begin{equation}\label{EQ:Fokker--Planck}
    \partial_t u = \cL^* u:=\frac{1}{2}\Delta(\sigma^2 u\big),\quad \mbox{in}\ \ \bbR^d \times(0, +\infty),\ \ \ u(\bx, 0)=u_0,\quad \mbox{in}\ \ \bbR^d\,,
\end{equation}
assuming that the initial distribution we started the process with, $u_0$, is also $\Omega$-periodic.

The fact that $\sigma(f_{\min}^*)=0$ means that the SDE~\eqref{EQ:Adaptive A Cont}, as well as the PDE~\eqref{EQ:Fokker--Planck}, are \textit{degenerate}. Moreover, as we will see later, our assumption on $f(\bx)$ and our selection of $\beta\ge d/2$ allow singular measures to be admissible solutions to the Fokker--Planck equation~\eqref{EQ:Fokker--Planck}. These factors make it nontrivial to fully characterize the behavior of the process $\{X_t\}_{t\ge 0}$.

We denote by $\cL_\eps$ the generator associated with the process with $\sigma_\eps$ in~\eqref{EQ:Sigma}. That is,
\begin{equation}\label{EQ:Deps}
    \cL_\eps :=D_\eps \Delta, \qquad D_\eps:=\frac{1}{2} \sigma_\eps^2 = \Big((f(\bx)-f_{\min}^*)^+\Big)^\beta+\eps\,.
\end{equation}
We will see later through numerical simulations that the algorithm~\eqref{EQ:Adaptive A} with adaptive diffusion~\eqref{EQ:Sigma} can be quite efficient in general. 

While iteration~\eqref{EQ:Adaptive A} is derivative-free in nature as it does not explicitly have the gradient of the objective function involved, gradient information is indeed encoded in the algorithm. This can be seen on the heuristic level from the Fokker--Planck equation~\eqref{EQ:Fokker--Planck}. Indeed, after a little rearrangement, the equation can be written as
\begin{equation} \label{eq:grad-FK}
    \partial_t u = \nabla\cdot(u\, \nabla \sigma) +\frac{1}{2}\sigma(f)\, \Delta u-\frac{1}{2}(\Delta \sigma) u\,.  
\end{equation}
At a given function value $f$, the first three terms of this Fokker--Planck equation correspond to the stochastic differential equation~\eqref{EQ:Adaptive A Cont} with an additional drift term $-\nabla\sigma\, dt$ on the right-hand side. The drift term $\nabla\sigma=\sigma'(f)\nabla f$ (and $\sigma'(f)>0$ under the assumptions) clearly depends on the gradient of the objective function. The last term, $-\frac{1}{2}(\Delta \sigma) u$, adds an absorption/generation mechanism in the process at locations where $\sigma$ is convex/concave.

We first provide some theoretical investigations of our algorithm in the case where the value of the global minimum of $f$, denoted by $f_{\min}$, is known \emph{a priori}. In this case, we take $f_{\min}^*=f_{\min}$ in~\eqref{EQ:Sigma}. 
We make the following assumptions on the objective function $f$.
\begin{enumerate}[label=\textbf{A\arabic*}]
\item \label{itm:A1} The function $f(\bx)$ is at least $\cC^2$ and is $\Omega$-periodic with a unique global minimizer $\bx_*\in \Omega$ with $f_{\min}:=f(\bx_*)$. Moreover, there is a gap $\mathfrak g$ between the global minimum value $f_{\min}$ and other local minima of $f(\bx)$.
\item \label{itm:A2} There exists $r,\, a > 0$ such that $f(\bx)-f_{\min} \leq  a |\bx-\bx_*|^2$ on $\cB_r(\bx_*) := \{\bx \in \mathbb{R}^d: |\bx-\bx_*| <  r\}\subset \Omega$. 
\item \label{itm:A3} There exists $b>0$ such that $f(\bx)-f_{\min}  \geq  b |\bx-\bx_*|^2$ for all $\bx\in\Omega$.
\end{enumerate}
\begin{remark}
    The rationale for making some of the assumptions in~\ref{itm:A1}-\ref{itm:A3} is mainly to simplify the presentation, as it will be evident from the discussions in the rest of this section that these assumptions can be relaxed significantly for the main results to remain valid. For example, the theoretical result will hold if we replace~\ref{itm:A1}-\ref{itm:A3} with
    \begin{enumerate}[label=\textbf{B\arabic*}]
    \item \label{itm:B1} The function $f(\bx)$ is $\Omega$-periodic with $K<+\infty$ global minimizers $\{\bx_k\} \subset \Omega$ with $f_{\min}:=f(\bx_k)$ $\forall 1\le k\le K$. Moreover, there is a gap $\mathfrak g$ between the global minimum value $f_{\min}$ and other local minima of $f(\bx)$.
    \item \label{itm:B2} There exists $r,\, a > 0$ such that for each $1\le k\le K$, $\Big(f(\bx)-f_{\min}\Big)^\beta \leq  a |\bx-\bx_k|^{d_*}$ on $\cB_r(\bx_k) := \{\bx \in \mathbb{R}^d: |\bx-\bx_k| <  r\}\subset \Omega$ for some $d_*\ge d$.
    \item \label{itm:B3} There exists $b>0$ such that for each $1\le k\le K$, $\Big(f(\bx)-f_{\min}\Big)^\beta \geq  b |\bx-\bx_*|^{d_*}$ for all $\bx\in\Omega$ and some $d_*\ge d$.
    \end{enumerate}
    In particular, \ref{itm:B2} and~\ref{itm:B3} say that the behavior of  $(f-f_{\min})^\beta$ is an essential component of the analysis.
    This makes the theory work for a larger class of objective functions. We will provide numerical simulations in~\Cref{sec:numerics} to illustrate the case with multiple global minimizers. 
\end{remark}

Our main results will be based on the analysis of the Fokker--Planck equation associated with the generator $\cL_\eps$ with regularization of the form
\begin{equation}\label{EQ:Epst}
    \eps(t)=(1+t)^{-\alpha},\ \ t\ge 0\,,
\end{equation}
for some $\alpha>0$. That is,
\begin{equation} \label{eq:eqn_u}
\partial_t u = \Delta \left( D_\eps u\right), \ \ \mbox{in}\ \ \bbR^d\times(0, +\infty), \ \ \ u(\bx, 0)=u_0(\bx), \ \ \mbox{in}\ \ \bbR^d\,,
\end{equation}
where all quantities involved are $\Omega$-periodic, and $D_\eps$ is defined in~\eqref{EQ:Deps}. We are interested in  solutions representing probability distributions, so we additionally require the normalization condition
\[
    \int_{\Omega}u(\bx, t) d\bx =1,\ \ \forall t\ge 0\,.
\]



The main strategy for constructing a solution to~\eqref{eq:eqn_u} is based on the instantaneous equilibrium distribution of the problem with a fix $\eps(t^*)$ for some $t^*$. For that purpose, for any given $t>0$, we denote by $\bar u(\bx, t)$ an $\Omega$-periodic function that solves
\begin{equation} \label{eq:eqn_u_bar}
\Delta \left( D_\eps(\bx,t) \bar{u}(\bx,t) \right) = 0\,,
\end{equation}
with the normalization condition $\int_\Omega \bar{u}(\bx,t) d\bx = 1$.


With these assumptions, we can prove the following results. 
\begin{theorem}\label{thm:main}
Under assumptions~\ref{itm:A1}-\ref{itm:A3}, let $u$ and $\bar{u}$ be solutions to~\eqref{eq:eqn_u} and \eqref{eq:eqn_u_bar}, respectively, for $\eps(t)$ given in~\eqref{EQ:Epst}. Take $\beta \geq \frac{d}{2}$ and $\alpha \in \left(0,\frac{1}{2} \right] \cap \left(0,  \frac{2\beta }{d + 3\beta}\right)$. Then there exists $t_0>0$ such that for all $t>t_0$ we have
\begin{equation}\label{eq:gamma}
   \big \| u(x,t) - \bar{u}(x,t) \big \|_{L^2\left(\mu \right)}   \lesssim   t^{- \gamma},\qquad \gamma = 1 - \left(\frac{d}{2\beta} + \frac{3}{2} \right)\alpha  > 0\,,
\end{equation}
where $\|\cdot \|_{L^2(\mu)}$ denotes the weighted $L^2$ norm with measure $d \mu = D_\eps(\bx,t)\, d\bx $.
\end{theorem}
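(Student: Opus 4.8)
The plan is to treat $\bar u$ as a slowly drifting instantaneous equilibrium and to show that the true solution $u$ tracks it, via a weighted energy estimate whose dissipation is controlled by a degenerate Poincar\'e inequality. First I would identify $\bar u$ explicitly: since $D_\eps(\cdot,t)\,\bar u(\cdot,t)$ is $\Omega$-periodic and harmonic, a periodic (hence bounded) harmonic function is constant in $\bx$, so with the normalization one gets $\bar u(\bx,t)=\big(Z(t)\,D_\eps(\bx,t)\big)^{-1}$ with $Z(t)=\int_\Omega D_\eps^{-1}\,d\bx$. Assumptions~\ref{itm:A2}--\ref{itm:A3} (quadratic behavior of $f-f_{\min}$) together with $\beta\ge d/2$ then pin down the concentration scaling of this equilibrium near $\bx_*$: on the scale $|\bx-\bx_*|\sim\eps^{1/(2\beta)}$ one has $Z\sim\eps^{\,d/(2\beta)-1}$ and peak height $\bar u\sim\eps^{-d/(2\beta)}$ (with a logarithmic correction in the borderline case $\beta=d/2$). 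This is exactly the mechanism that concentrates $\bar u$ on the global minimizer as $\eps\to0$, and it is where the exponent $d/(2\beta)$ in $\gamma$ originates.

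Next I would derive the error equation. Writing $w:=u-\bar u$ and using $\Delta(D_\eps\bar u)=0$ gives $\partial_t w=\Delta(D_\eps w)-\partial_t\bar u$. Differentiating the target quantity $E(t):=\|w\|_{L^2(\mu)}^2=\int_\Omega w^2 D_\eps\,d\bx$ and integrating by parts (legitimate because the regularization keeps $D_\eps\ge\eps(t)>0$, so the frozen-time operator is uniformly elliptic and all quantities are smooth and $\Omega$-periodic) yields
\[
\tfrac12\dr{E}{t} = -\int_\Omega |\nabla(D_\eps w)|^2\,d\bx + \tfrac12\dot\eps\int_\Omega w^2\,d\bx - \langle w,\partial_t\bar u\rangle_{L^2(\mu)},
\]
where I have used $\partial_t D_\eps=\dot\eps$. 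The middle term has a favorable sign ($\dot\eps<0$) and may be discarded, while the last term is the forcing produced by the motion of the equilibrium.

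The two analytic inputs are then a coercivity bound and a forcing bound. For coercivity I would establish a weighted Poincar\'e inequality $\int_\Omega|\nabla(D_\eps w)|^2\,d\bx\ge\lambda(t)\,E(t)$, valid because $\int_\Omega w\,d\bx=0$ forces $D_\eps w$ to have zero average against $D_\eps^{-1}\,d\bx$; the constant $\lambda(t)$ degenerates as an explicit power of $\eps(t)$, again controlled through~\ref{itm:A2}--\ref{itm:A3} and $\beta\ge d/2$. For the forcing I would apply Cauchy--Schwarz, $\langle w,\partial_t\bar u\rangle_{L^2(\mu)}\le E^{1/2}\|\partial_t\bar u\|_{L^2(\mu)}$, and estimate $\|\partial_t\bar u\|_{L^2(\mu)}$ by differentiating the explicit formula for $\bar u$; each $t$-derivative produces a factor $\dot\eps$, and with $\dot\eps\sim-(1+t)^{-1}\eps$ one gets a bound $\|\partial_t\bar u\|_{L^2(\mu)}\lesssim(1+t)^{-1}\eps^{-p}$ for an explicit $p>0$ fixed by the same concentration scaling. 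Substituting into the energy identity and writing $G:=E^{1/2}$ gives the scalar differential inequality $\dot G\le-\lambda(t)\,G+(1+t)^{-1}\eps^{-p}$.

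Finally I would integrate this Gr\"onwall inequality. In the admissible range of $\alpha$ the cooling is slow enough that the system is quasi-static: the homogeneous transient decays past some $t_0$ and the forced response balances source against dissipation, so that $G(t)\lesssim A(t)/\lambda(t)$, in which the powers of $\eps$ coming from the forcing and from the degeneracy of $\lambda$ add up to $d/(2\beta)+\tfrac32$. Tracking these exponents yields $G(t)\lesssim t^{-\gamma}$ with $\gamma=1-\big(\tfrac{d}{2\beta}+\tfrac32\big)\alpha$, and one checks directly that $\gamma>0$ is \emph{equivalent} to $\alpha<\frac{2\beta}{d+3\beta}$, which is precisely one of the stated restrictions; the remaining condition $\alpha\le\tfrac12$ is the slow-cooling requirement that keeps the transient subdominant and the quasi-static balance valid. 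I expect the main obstacle to be the degenerate weighted Poincar\'e inequality and the exact tracking of its $\eps$-dependence: because $D_\eps^{-1}\,d\bx$ concentrates at $\bx_*$ while the Dirichlet energy carries no weight, the relevant constant is a Muckenhoupt/Hardy-type quantity whose scaling must be extracted from the quadratic profile of $f$ near $\bx_*$ under $\beta\ge d/2$. Controlling $\|\partial_t\bar u\|_{L^2(\mu)}$ is delicate for the same reason---it is a difference of two comparably large concentrating terms, so one must verify there is no leading-order cancellation loss---but this is less severe than the coercivity estimate.
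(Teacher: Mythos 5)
Your proposal follows essentially the same route as the paper: the explicit instantaneous equilibrium $\bar u = (Z\,D_\eps)^{-1}$, the weighted energy identity for $u-\bar u$ with the favorable $\dot\eps\int w^2$ term discarded, coercivity from a Muckenhoupt-weighted Poincar\'e inequality with constant scaling like $\eps^2$, a forcing term of size $t^{-1}\eps^{-p}$ controlled through the lower bound $Z\gtrsim \eps^{d/(2\beta)-1}$, and a quasi-static Gr\"onwall integration yielding $\gamma = 1-\big(\tfrac{d}{2\beta}+\tfrac32\big)\alpha$ under exactly the stated restrictions on $\alpha$. The only cosmetic difference is in the forcing estimate: the paper first uses $\int_\Omega (u-\bar u)\,d\bx = 0$ to annihilate the $\partial_t(Z^{-1})$ contribution to $\partial_t\bar u$ before bounding the remainder crudely via $D_{\min}=\eps$, which disposes of the cancellation issue you flag, while your direct Cauchy--Schwarz bound on $\|\partial_t\bar u\|_{L^2(\mu)}$ also works (and in fact gives a slightly sharper exponent).
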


Theorem~\ref{thm:main} yields the following corollary which states that the process $\{X_t\}_{t\ge 0}$ generated by~\eqref{EQ:Adaptive A Cont} with $\sigma_\eps$ given in~\eqref{EQ:Sigma} and $\eps$ given in~\eqref{EQ:Epst} converges in probability to the global minimizer $\bx_*$ of $f(\bx)$.
\begin{corollary}\label{coro:convergence in p}
Let $\beta > d/2$. Then, under the same setting as in Theorem~\ref{thm:main}, for any $\delta>0$, we have that
\begin{equation}\label{eq:kappa}
    \bbP \left(|X_t - \bx_*| > \delta \right)   \lesssim t^{-\kappa}, \quad  \kappa = \min \left(\gamma,  \left( 1 - \frac{d}{2\beta} \right)\alpha \right)\,,
\end{equation}
for all $t>t_0$. Moreover, if we take $\delta=t^{-\nu}$ with $\nu$ such that $0<\nu<\min(\gamma, (\frac{1}{2}-\frac{d}{4\beta})\alpha)$, then we have
\begin{equation}\label{eq:kappa2}
    \bbP \left(|X_t - \bx_*| > t^{-\nu} \right)   \lesssim t^{-\kappa'}, \quad  \kappa' = \min \left(\gamma-\nu,  \left( 1 - \frac{d}{2\beta} \right)\alpha-2\nu \right)\,,
\end{equation}
for all $t>t_0$.
\end{corollary}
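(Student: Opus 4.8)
The plan is to read the probability as the mass that the law $u(\cdot,t)$ of $X_t$ places outside a ball around $\bx_*$,
\[
\bbP(|X_t-\bx_*|>\delta)=\int_{|\bx-\bx_*|>\delta}u(\bx,t)\,d\bx,
\]
and to estimate this through the splitting $u=\bar u+(u-\bar u)$, with $\bar u$ the instantaneous equilibrium of~\eqref{eq:eqn_u_bar} and $u-\bar u$ the fluctuation already controlled in the weighted norm by Theorem~\ref{thm:main}. First I would make $\bar u$ explicit: since $\Delta(D_\eps\bar u)=0$ and $D_\eps\bar u$ is $\Omega$-periodic, a Liouville-type argument on the torus forces $D_\eps\bar u$ to be constant, so $\bar u=c(t)/D_\eps$ with $c(t)=\big(\int_\Omega D_\eps^{-1}\,d\bx\big)^{-1}$ fixed by $\int_\Omega\bar u=1$. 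Everything then reduces to the asymptotics of singular integrals of $D_\eps^{-1}$ as $\eps\to0$.

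Next I would carry out a Laplace/Watson-type analysis using~\ref{itm:A2}--\ref{itm:A3}, which give $(f-f_{\min})^\beta\simeq|\bx-\bx_*|^{2\beta}$ near $\bx_*$. Rescaling $\bx-\bx_*=\eps^{1/(2\beta)}\by$, and using $\beta>d/2$ so that the model integral $\int_{\bbR^d}(|\by|^{2\beta}+1)^{-1}\,d\by$ converges, yields $\int_\Omega D_\eps^{-1}\,d\bx\simeq\eps^{\,d/(2\beta)-1}$, hence $c(t)\simeq\eps^{\,1-d/(2\beta)}=(1+t)^{-\alpha(1-d/(2\beta))}$; this is the origin of the exponent $(1-\tfrac{d}{2\beta})\alpha$ in~\eqref{eq:kappa}. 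For the equilibrium mass outside the ball I would then bound, again via~\ref{itm:A3},
\[
\int_{|\bx-\bx_*|>\delta}\bar u\,d\bx=c(t)\int_{|\bx-\bx_*|>\delta}\frac{d\bx}{(f-f_{\min})^\beta+\eps}\lesssim c(t)\,\delta^{-(2\beta-d)},
\]
the last integral being dominated by its lower endpoint $|\bx-\bx_*|=\delta$ because $2\beta-d>0$.

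For the fluctuation I would apply Cauchy--Schwarz against the measure $d\mu=D_\eps\,d\bx$ supplied by Theorem~\ref{thm:main}:
\[
\int_{|\bx-\bx_*|>\delta}|u-\bar u|\,d\bx\le\|u-\bar u\|_{L^2(\mu)}\Big(\int_{|\bx-\bx_*|>\delta}D_\eps^{-1}\,d\bx\Big)^{1/2}\lesssim t^{-\gamma}\,\delta^{-(\beta-d/2)}.
\]
Summing the two contributions gives, for fixed $\delta$, the rate $\kappa=\min\!\big(\gamma,(1-\tfrac{d}{2\beta})\alpha\big)$ of~\eqref{eq:kappa}, since the $\delta$-factors are then harmless constants. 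For $\delta=t^{-\nu}$ I would substitute $\delta^{-(2\beta-d)}=t^{(2\beta-d)\nu}$ and $\delta^{-(\beta-d/2)}=t^{(\beta-d/2)\nu}$ and collect powers of $t$; the constraints $\alpha\le\tfrac12$ and $0<\nu<\min\!\big(\gamma,(\tfrac12-\tfrac{d}{4\beta})\alpha\big)$ are exactly what let the near-field and far-field regimes separate ($\delta\gg\eps^{1/(2\beta)}$) and keep both resulting exponents positive, producing a bound of the form~\eqref{eq:kappa2}.

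The main obstacle I anticipate is the uniform, quantitative control of these singular integrals as $\eps\to0$ jointly with the shrinking radius $\delta=t^{-\nu}$: one must verify that the near-$\bx_*$ (concentration) and tail regimes separate cleanly throughout the stated ranges of $\alpha$ and $\nu$, and that the model-integral asymptotics survive the replacement of $(f-f_{\min})^\beta$ by its quadratic comparison from~\ref{itm:A2}--\ref{itm:A3} with $\eps$-independent constants. A secondary subtlety is that $u-\bar u$ need not be sign-definite, so the passage from the $L^2(\mu)$ estimate to a one-sided mass bound must go through $|u-\bar u|$ and the weight $D_\eps^{-1}$, whose own $\eps$- and $\delta$-dependence has to be tracked carefully to land on the clean exponents $\gamma-\nu$ and $(1-\tfrac{d}{2\beta})\alpha-2\nu$ in~\eqref{eq:kappa2}.
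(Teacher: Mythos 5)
Your proposal follows the same architecture as the paper's proof: write $\bbP(|X_t-\bx_*|>\delta)=\int_{\Omega\cap\cB_\delta(\bx_*)^c}u\,d\bx$, split $u=\bar u+(u-\bar u)$, control the equilibrium tail through~\ref{itm:A3} and the lower bound on $Z_{\bar u}$ from Lemma~\ref{LMMA:Well}, and control the fluctuation by Cauchy--Schwarz against Theorem~\ref{thm:main}; for fixed $\delta$ this correctly yields~\eqref{eq:kappa}. The one real divergence is in the $\delta$-bookkeeping for the second claim. The paper bounds $D_\eps\gtrsim b\delta^2$ pointwise on $\cB_\delta(\bx_*)^c$ and applies Cauchy--Schwarz with unweighted Lebesgue measure, which produces the factors $(b\delta^2)^{-1/2}$ and $(b\delta^2)^{-1}$ and hence the coefficients $\nu$ and $2\nu$ in $\kappa'$. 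You instead integrate the tails radially, obtaining $\int_{\cB_\delta^c}D_\eps^{-1}\,d\bx\lesssim\delta^{d-2\beta}$, so your two contributions scale as $t^{-\gamma}\delta^{-(\beta-d/2)}$ and $Z_{\bar u}^{-1}\delta^{-(2\beta-d)}$; with $\delta=t^{-\nu}$ these give exponents $\gamma-(\beta-\tfrac d2)\nu$ and $(1-\tfrac{d}{2\beta})\alpha-2(\beta-\tfrac d2)\nu$, which coincide with the stated $\kappa'$ only when $\beta=\tfrac d2+1$. Your closing phrase ``producing a bound of the form~\eqref{eq:kappa2}'' glosses over this mismatch: to land on the exponents as written you must use the paper's pointwise lower bound on $D_\eps$ over the complement of the ball rather than the integrated tail estimate, or else restate $\kappa'$ and the admissible range of $\nu$ with your $(\beta-\tfrac d2)$-scaled coefficients. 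This is a bookkeeping discrepancy rather than a conceptual gap, but it needs to be resolved explicitly before the second half of the corollary is proved.
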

\begin{remark}
When $\beta = d/2$, we obtain the standard  logarithmic convergence as $ \Prob \left(|X_t - \bx_*| > \delta \right)   \lesssim  (\log t)^{-1}$ after applying Theorem~\ref{thm:main}.
\end{remark}
The rest of this section is devoted to the proof of these results.

\subsection{Preliminaries in the Case of Fixed \texorpdfstring{$\eps$}{}}

The solution $\bar u$ of~\eqref{eq:eqn_u_bar}, which we refer to as the instantaneous equilibrium distribution, is the equilibrium solution for the problem~\eqref{eq:eqn_u} with a fixed $\eps>0$. 
It is straightforward to verify that, when $D_\eps^{-1} \in L^1(\Omega)$, $\bar u$ is given as
\begin{equation}\label{EQ:Stationary Dist}
    \bar u(\bx) =  Z_{\bar u}^{-1} D_\eps^{-1}=Z_{\bar u}^{-1}\frac{1}{\Big(f(\bx)-f_{\min}\Big)^\beta+\eps},\qquad Z_{\bar u}:=\|D_\eps^{-1}\|_{L^1(\Omega)}\,.
\end{equation}
Note that periodicity and non-negativity of $\bar u(\bx)$ force out solutions of the form $D_\eps^{-1}(\bA\cdot \bx+B)$ for some vector $\bA$ and constant $B$. 

We first show that $\bar u(\bx)$ is well-defined for any fixed $\eps>0$. This requires us to show that $Z_{\bar u}$ is finite, in which case $\bar u(\bx)\ge 0$ and $\dint_{\Omega} \bar u(\bx) d\bx=1$. We have the following lemma.
\begin{lemma}\label{LMMA:Well}
    Under assumptions~\ref{itm:A1}-\ref{itm:A3}, for any given $\eps>0$, we have that
    \begin{equation*}
        0< Z_{\bar u} \le \frac{5}{2}V_{\Omega}\, \eps^{-1}
    \end{equation*}
   with $V_\Omega$ the volume of $\Omega$. Moreover, when $\eps$ is sufficiently small, we have that
    \begin{equation}\label{EQ:Lower Bounds}
        Z_{\bar u}\ge \left\{
        \begin{matrix}
            C_1\, \eps^{-\frac{2\beta-d}{2\beta}},& \beta>d/2\\
            C_2\, \log \left( 1/\eps \right), & \beta=d/2
        \end{matrix}
        \right.
    \end{equation}
    for some positive constants $C_1$ and $C_2$ independent of $\eps$.
\end{lemma}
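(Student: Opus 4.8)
The formula~\eqref{EQ:Stationary Dist} is the right starting point. Since assumption~\ref{itm:A1} gives $f(\bx)\ge f_{\min}$ throughout $\Omega$, the positive part in~\eqref{EQ:Deps} is inactive, so $D_\eps=(f-f_{\min})^\beta+\eps$ and the whole lemma reduces to estimating
\[
  Z_{\bar u}=\int_\Omega \frac{d\bx}{\big(f(\bx)-f_{\min}\big)^\beta+\eps}
\]
from both sides. Positivity and the stated upper bound are essentially free: the integrand is strictly positive, and since $(f-f_{\min})^\beta\ge 0$ it is bounded pointwise by $\eps^{-1}$, so $0<Z_{\bar u}\le V_\Omega\,\eps^{-1}\le \tfrac52 V_\Omega\,\eps^{-1}$. (A sharper upper bound matching the lower-bound order is available from the same splitting used below, but the crude one is all that is claimed.)

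The real content is the lower bound, and the plan is to discard all of $\Omega$ except the ball $\cB_r(\bx_*)$ on which the quadratic control~\ref{itm:A2} holds. There $(f-f_{\min})^\beta\le a^\beta|\bx-\bx_*|^{2\beta}$, so
\[
  Z_{\bar u}\ \ge\ \int_{\cB_r(\bx_*)}\frac{d\bx}{a^\beta|\bx-\bx_*|^{2\beta}+\eps}
  \ =\ \omega_{d-1}\int_0^r \frac{s^{d-1}}{a^\beta s^{2\beta}+\eps}\,ds,
\]
where $\omega_{d-1}$ is the surface area of the unit sphere in $\bbR^d$ and the second step is integration in polar coordinates centered at $\bx_*$. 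The purpose of this form is that the integral is scale-covariant: the substitution $s=(\eps/a^\beta)^{1/(2\beta)}\tau$ pulls out exactly the advertised power of $\eps$, giving
\[
  Z_{\bar u}\ \ge\ \omega_{d-1}\,a^{-d/2}\,\eps^{\frac{d}{2\beta}-1}\,I(R_\eps),
  \qquad
  I(R):=\int_0^{R}\frac{\tau^{d-1}}{\tau^{2\beta}+1}\,d\tau,
  \quad
  R_\eps:=r\,(a^\beta/\eps)^{1/(2\beta)}.
\]
Note $R_\eps\to\infty$ as $\eps\to 0$ and that $\eps^{d/(2\beta)-1}=\eps^{-(2\beta-d)/(2\beta)}$ is precisely the rate claimed for $\beta>d/2$.

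Everything then comes down to the large-$R_\eps$ behavior of the dimensionless integral $I$, whose integrand decays like $\tau^{d-1-2\beta}$. When $\beta>d/2$ the exponent $d-1-2\beta<-1$, so $I(R_\eps)$ increases to a finite positive limit; bounding it below by $I(1)>0$ once $\eps$ is small enough that $R_\eps\ge1$ yields $Z_{\bar u}\ge C_1\,\eps^{-(2\beta-d)/(2\beta)}$ with $C_1>0$ independent of $\eps$. At the borderline $\beta=d/2$ the prefactor $\eps^{d/(2\beta)-1}=1$ and $I(R_\eps)=\tfrac1d\log(R_\eps^{d}+1)\sim \tfrac1{2\beta}\log(1/\eps)$ diverges logarithmically, giving $Z_{\bar u}\ge C_2\log(1/\eps)$. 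I expect the only genuine work to be this dichotomy together with the bookkeeping that keeps $C_1,C_2$ positive and $\eps$-independent; the rescaling is what makes the $\eps$-dependence fall out automatically, and the convergence-versus-divergence threshold of $I$ at $\beta=d/2$ is exactly where the logarithm appears.
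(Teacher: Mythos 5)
Your proof is correct. For the lower bound you use the same core idea as the paper --- restrict to $\cB_r(\bx_*)$, invoke~\ref{itm:A2} to dominate $(f-f_{\min})^\beta$ by $a^\beta|\bx-\bx_*|^{2\beta}$, and let the length scale $\eps^{1/(2\beta)}$ produce the rate --- but you execute it more cleanly: the paper splits $\cB_r(\bx_*)$ into the sublevel set $\phi_{\le\eps}$ (where the integrand is $\ge\tfrac{1}{2\eps}$) and its complement (handled by a polar integral of $s^{d-1-2\beta}$ from $r_\eps$ to $r$), and adds the two contributions, whereas your single rescaled integral $I(R_\eps)$ absorbs both pieces at once and makes the $\beta>d/2$ versus $\beta=d/2$ dichotomy fall out of the convergence threshold of $\int\tau^{d-1-2\beta}\,d\tau$. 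Your upper bound is genuinely simpler than the paper's: the paper performs a three-way decomposition of $\Omega$ and bounds each piece separately to arrive at $\tfrac52 V_\Omega\eps^{-1}$, while you just observe that the integrand is pointwise at most $\eps^{-1}$ since $\phi\ge 0$, which already gives $V_\Omega\eps^{-1}$; the paper's extra work buys nothing here since only the order in $\eps$ matters downstream. The one point worth stating explicitly in your write-up is that $I$ is increasing in $R$, which is what justifies $I(R_\eps)\ge I(1)$ once $R_\eps\ge 1$; with that noted, the constants $C_1=\omega_{d-1}a^{-d/2}I(1)$ and $C_2$ are manifestly positive and independent of $\eps$, as required.
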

\begin{proof}
    Let $r$, $\mathfrak g$, $a$ and $b$ be defined as in the assumptions~\ref{itm:A1}-\ref{itm:A3}, and define $\phi:=(f-f_{\min})^\beta$. We first derive the upper bound to show that $\bar u(\bx)$ is well-defined. 
    We observe first, using the notation $\phi_{\le \eps}:=\{\bx : \phi(\bx)\le \eps\}$ and $\phi^c_{\le \eps}:=\left(\{\bx : \phi(\bx)\le \eps\}\right)^c = \{\bx : \phi(\bx)>\eps\} $, that
    \begin{align}
       Z_{\bar u}=\int_{\Omega} \frac{1}{\phi+\eps} d\bx &=\int_{\cB_r(\bx_*)} \frac{1}{\phi+\eps} d\bx +\int_{\cB_r(\bx_*)^c} \frac{1}{\phi+\eps} d\bx \nonumber \\ 
       &=\int_{\cB_r(\bx_*)} \frac{1}{\phi+\eps} d\bx +\int_{\cB_r(\bx_*)^c\cap \phi_{\le \eps}} \frac{1}{\phi+\eps} d\bx  +\int_{\cB_r(\bx_*)^c\cap \phi_{\le \eps}^c} \frac{1}{\phi+\eps} d\bx\,. \label{EQ:Decomp}
    \end{align}
    The first term is bounded by $\eps^{-1} V_{\cB_r(\bx_*)}$ with $V_{\cB_r(\bx_*)}$ the volume of the ball $\cB_r(\bx_*)$. By the assumption on $f(\bx)$, the set $\phi_{\le \eps}$ is compact. Therefore, the second term is bounded by $\eps^{-1} V_{\cB_r(\bx_*)^c\cap \phi_{\le \eps}}\le \eps^{-1}V_{\Omega}$. To bound the last term, we use the assumptions on $f$ to get
    \[
    \int_{\cB_r(\bx_*)^c\cap \phi_{\le \eps}^c} \frac{1}{\phi+\eps} d\bx\le \int_{\cB_r(\bx_*)^c\cap \phi_{\le \eps}^c} \frac{1}{2\eps} d\bx\le \frac{V_{\Omega}}{2\eps}\,.
    \]
    We can now combine the three terms to get the upper bound of $Z_{\bar u}$. 
    
    To derive the lower bounds in~\eqref{EQ:Lower Bounds}, we assume that $\eps<f_{\min}+\mathfrak g$. We observe from~\eqref{EQ:Decomp} that
    \begin{align*}
        Z_{\bar u} &\ge \int_{\cB_r(\bx_*)} \frac{1}{\phi+\eps} d\bx \\
        & =\int_{\cB_r(\bx_*)\cap \phi_{\le\eps}} \frac{1}{\phi+\eps} d\bx+\int_{\cB_r(\bx_*)\cap \phi_{\le\eps}^c} \frac{1}{\phi+\eps} d\bx 
         \ge \frac{1}{2\eps}\int_{\cB_r(\bx_*)(r)\cap \phi_{\le\eps}} d\bx +\frac{1}{2}\int_{\cB_r(\bx_*)\cap \phi_{\le\eps}^c} \frac{1}{\phi} d\bx\,.
    \end{align*}
    Let $r_\eps$ be such that 
    \[
        a^\beta r_\eps^{2\beta}=\eps,\ \ \ \mbox{or equivalently},\ \ \ r_\eps=a^{-1/2}\eps^{1/2\beta}\,.
    \]
    Then, we have,  by denoting $c_d=\frac{\pi^{d/2}}{\Gamma(\frac{d}{2}+1)}$ (the volume of the unit ball in $\bbR^d$), that 
    \begin{equation*}
        \int_{\cB_r(\bx_*)\cap \phi_{\le\eps}} d\bx=\left\{
        \begin{matrix}
            V_{\cB_r(\bx_*)} &=&c_d\, r^d, && & r_\eps\ge r,\\
            V_{\phi_{\le\eps}}&=&c_d\, r_\eps^d &=&c_d\, a^{-d/2}\eps^{d/2\beta}, &\, r_\eps< r\,.\\
        \end{matrix}
        \right.
    \end{equation*}
    Moreover, when $r_\eps<r$, we have that
    \begin{multline*}
        \int_{\cB_r(\bx_*)\cap \phi_{\le\eps}^c} \frac{1}{\phi} d\bx = \int_{\cB_r(\bx_*)\cap  \cB_{r_\eps}(\bx_*)^c} \frac{1}{\phi} d\bx \ge \int_{\cB_r(\bx_*)\cap \cB_{r_\eps}(\bx_*)^c} \frac{1}{a^\beta|\bx-\bx_*|^{2\beta}} d\bx\\
        =\frac{\cA(\bbS^{d-1})}{a^\beta} \int_{r_\eps}^r s^{d-1-2\beta} ds =\frac{\cA(\bbS^{d-1})}{a^\beta} \left\{
        \begin{matrix}
            \frac{1}{d-2\beta}(r^{d-2\beta}-r_{\eps}^{d-2\beta}), & \mbox{when}\ \beta>d/2\\
            \log r/r_{\eps}, & \,\mbox{when}\ \beta=d/2\,.
        \end{matrix}
        \right.
    \end{multline*}
    Here, $\cA(\bbS^{d-1})$ denotes the area of the sphere $\bbS^{d-1}$.
    We can now put these bounds together and utilize the fact that $\dint_{\cB_r(\bx_*)\cap \phi_{\le\eps}^c} \frac{1}{\phi} d\bx>0$ when $r_\eps\ge r$, to finish the proof.
\end{proof}
The above calculation shows that $Z_{\bar u}$, the integral of $D_\eps^{-1}$ over $\Omega$, blows up as $\eps\to 0$. This is a key feature needed for the distribution $\bar u$ to concentrate on the global minimizer $\bx_*$ for sufficiently small $\eps$, as we prove in the next lemma.
\begin{lemma}\label{LMMA:Concentration}
    Under assumptions~\ref{itm:A1}-\ref{itm:A3}, for any given function value $\mathfrak f>f_{\min}$ and $\delta>0$, there exists $\eps_0>0$ such that for any $\eps\le \eps_0$,
    \[
        \int_{\{\bx: f(\bx)\le \mathfrak f\}} \bar u(\bx)\, d\bx \ge 1-\delta\,,
    \]
    where $\bar u$, depending on $\eps$, is defined in~\eqref{EQ:Stationary Dist}.
\end{lemma}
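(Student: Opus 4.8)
The plan is to prove the complementary statement, namely that the mass of $\bar u$ on the superlevel set $\{\bx : f(\bx) > \mathfrak f\}$ becomes arbitrarily small as $\eps \to 0$. Since $\bar u$ is a probability density, $\int_\Omega \bar u\, d\bx = 1$, and splitting this integral across $\{f \le \mathfrak f\}$ and its complement shows that it suffices to establish $\int_{\{f > \mathfrak f\}} \bar u(\bx)\, d\bx \to 0$. Note that assumptions~\ref{itm:A1}--\ref{itm:A3} will enter only through Lemma~\ref{LMMA:Well}; the splitting itself requires nothing beyond $\mathfrak f > f_{\min}$.

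First I would control the numerator. On $\{f > \mathfrak f\}$ we have $f(\bx) - f_{\min} > \mathfrak f - f_{\min} =: c > 0$, so $(f-f_{\min})^\beta + \eps > c^\beta$ pointwise there, and hence
\[
\int_{\{f > \mathfrak f\}} D_\eps^{-1}\, d\bx = \int_{\{f > \mathfrak f\}} \frac{d\bx}{(f-f_{\min})^\beta + \eps} \le \frac{V_\Omega}{c^\beta}\,,
\]
a bound independent of $\eps$. Dividing by the normalizing constant $Z_{\bar u} = \|D_\eps^{-1}\|_{L^1(\Omega)}$ and recalling the form~\eqref{EQ:Stationary Dist} of $\bar u$ gives
\[
\int_{\{f > \mathfrak f\}} \bar u(\bx)\, d\bx \le \frac{V_\Omega}{c^\beta\, Z_{\bar u}}\,.
\]

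The decisive step is to invoke the lower bound of Lemma~\ref{LMMA:Well}, which shows $Z_{\bar u} \to \infty$ as $\eps \to 0$ (at the rate $\eps^{-(2\beta-d)/(2\beta)}$ when $\beta > d/2$, and $\log(1/\eps)$ when $\beta = d/2$). Consequently the right-hand side above tends to $0$, and choosing $\eps_0$ small enough that $V_\Omega/(c^\beta Z_{\bar u}) \le \delta$ for every $\eps \le \eps_0$ completes the proof. There is no serious obstacle here: the whole content is a competition between a numerator that stays bounded on $\{f > \mathfrak f\}$ (because $f - f_{\min}$ is bounded away from zero there) and a normalizer $Z_{\bar u}$ that diverges. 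The only point deserving care is that this divergence is the genuinely nontrivial input, supplied precisely by Lemma~\ref{LMMA:Well} for the full admissible range $\beta \ge d/2$; since even the logarithmic case $\beta = d/2$ diverges, the conclusion holds uniformly across all such $\beta$.
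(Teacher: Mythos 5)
Your proposal is correct and follows essentially the same route as the paper: rewrite the mass on $\{f\le\mathfrak f\}$ as one minus the mass on $\{f>\mathfrak f\}$, bound the latter numerator by $V_\Omega/(\mathfrak f-f_{\min})^\beta$ using that $f-f_{\min}$ is bounded away from zero there, and then let the divergence of $Z_{\bar u}$ from Lemma~\ref{LMMA:Well} (in both the $\beta>d/2$ and $\beta=d/2$ cases) drive the ratio to zero. The paper's only additional remark is that the final bound is monotone in $\eps$, which justifies the ``for all $\eps\le\eps_0$'' conclusion exactly as you use it.
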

\begin{proof}
With the same notation $\phi(\bx):=(f(\bx)-f_{\min})^\beta$, we observe that
\[
    \int_{\{\bx: f(\bx)\le \mathfrak f\}} \bar u(\bx)\, d\bx =\frac{1}{Z_{\bar u}} \int_{\{\bx: f(\bx) \le \mathfrak f\}} \frac{1}{\phi +\eps} d\bx=1-\frac{1}{Z_{\bar u}} \int_{\{\bx: f(\bx) > \mathfrak f\}} \frac{1}{\phi+\eps} d\bx\,.
\]
Meanwhile, it is straightforward to see that 
\begin{align*}
    \frac{1}{Z_{\bar u}} \int_{\{\bx: f(\bx) > \mathfrak f\}} \frac{1}{\phi+\eps} d\bx   & \le \frac{1}{Z_{\bar u}} \int_{\{\bx: f(\bx) > \mathfrak f\}} \frac{1}{\phi(\bx)} d\bx \\ & \le \frac{1}{Z_{\bar u}} \int_{\{\bx: f(\bx) > \mathfrak f\}} \frac{1}{(\mathfrak f-f_{\min})^\beta} d\bx  \le \frac{1}{Z_{\bar u}} \frac{V_\Omega}{(\mathfrak f-f_{\min})^\beta}\,.
\end{align*}
By the result of Lemma~\ref{LMMA:Well}, we have that
\begin{equation}\label{EQ:Probability Bound}
    \frac{1}{Z_{\bar u}} \frac{V_\Omega}{(\mathfrak f-f_{\min})^\beta} \le \frac{V_\Omega}{(\mathfrak f-f_{\min})^\beta}  \left\{
    \begin{matrix}
        \frac{1}{C_1}\, \eps^{\frac{2\beta-d}{2\beta}},& \beta>d/2\\
        \frac{1}{C_2}\, \Big(\log \frac{1}{\eps}\Big)^{-1}, & \beta=d/2
    \end{matrix}
    \right.
\end{equation}
with $C_1$ and $C_2$ given as in~\eqref{EQ:Lower Bounds}. It is then clear that we can select $\eps=\eps_0$ small enough to make this term smaller than $\delta$. The rest then follows from the monotonicity of the bound in~\eqref{EQ:Probability Bound} with respect to $\eps$.
\end{proof}
As we can see, \Cref{coro:convergence in p}, based on a finely-tuned time-dependent $\eps$, provides a more precise characterization of this result. It says that when we tune $\eps(t)$ at the rate of $t^{-\alpha}$, we get that the rate of concentration is $\lesssim t^{-\gamma}$ for some $\gamma>0$ depending on $\alpha$.

The calculation in~\Cref{LMMA:Concentration} also suggests that the distribution $\bar u$ converges to the delta-measure at $\bx_*$. This is indeed the case as we see from the following lemma.
\begin{lemma}
    Under assumptions~\ref{itm:A1}-\ref{itm:A3}, we have that $\bar u(\bx) \to \delta(\bx-\bx_*)$ weakly as $\eps\to 0$.
\end{lemma}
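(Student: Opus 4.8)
The goal is to prove that $\bar u(\bx) \to \delta(\bx - \bx_*)$ weakly as $\eps \to 0$. Weak convergence to a delta measure means that for every $g \in \cC(\Omega)$ (equivalently, every bounded continuous $\Omega$-periodic test function), we have $\int_\Omega g(\bx)\, \bar u(\bx)\, d\bx \to g(\bx_*)$ as $\eps \to 0$. Since Lemma~\ref{LMMA:Well} already establishes that $\bar u$ is a genuine probability density (nonnegative, integrating to $1$), the family $\{\bar u\}_{\eps>0}$ is a family of probability measures, and it suffices to show concentration near $\bx_*$ together with the normalization.

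The plan is to reduce the statement to the concentration result already proved in Lemma~\ref{LMMA:Concentration}. Fix $g \in \cC(\Omega)$ and let $\omega_g$ denote its modulus of continuity at $\bx_*$. Given $\delta' > 0$, by continuity of $g$ there is a radius $\rho$ such that $|g(\bx) - g(\bx_*)| < \delta'$ whenever $|\bx - \bx_*| < \rho$. I would then split the integral as
\begin{equation*}
    \int_\Omega g(\bx)\,\bar u(\bx)\, d\bx - g(\bx_*) = \int_\Omega \big(g(\bx) - g(\bx_*)\big)\,\bar u(\bx)\, d\bx,
\end{equation*}
using $\int_\Omega \bar u\, d\bx = 1$. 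Splitting the right-hand side over the ball $\cB_\rho(\bx_*)$ and its complement, the near-field part is bounded in absolute value by $\delta' \int_{\cB_\rho(\bx_*)} \bar u\, d\bx \le \delta'$, while the far-field part is bounded by $2\|g\|_{\infty} \int_{\cB_\rho(\bx_*)^c} \bar u\, d\bx$.

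The remaining task is to control the far-field mass $\int_{\cB_\rho(\bx_*)^c} \bar u\, d\bx$. Here assumption~\ref{itm:A3} is the key tool: it gives $f(\bx) - f_{\min} \ge b|\bx - \bx_*|^2 \ge b\rho^2$ on the complement of $\cB_\rho(\bx_*)$, so with $\mathfrak f := f_{\min} + b\rho^2 > f_{\min}$ we have $\{\bx : |\bx - \bx_*| \ge \rho\} \subseteq \{\bx : f(\bx) \ge \mathfrak f\}$. Applying Lemma~\ref{LMMA:Concentration} with this $\mathfrak f$ and the tolerance $\delta'/(2\|g\|_\infty)$ produces an $\eps_0$ such that for all $\eps \le \eps_0$ the far-field mass is at most $\delta'/(2\|g\|_\infty)$, making the far-field contribution at most $\delta'$. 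Combining the two estimates gives $|\int_\Omega g\, \bar u\, d\bx - g(\bx_*)| \le 2\delta'$ for all $\eps \le \eps_0$; since $\delta'$ was arbitrary, this establishes the claim.

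I do not anticipate a serious obstacle, since Lemma~\ref{LMMA:Concentration} does essentially all the analytic work and only the quadratic lower bound~\ref{itm:A3} is needed to convert a sublevel-set statement into a ball statement. The one point requiring a little care is the reduction to continuous test functions: weak convergence of probability measures (convergence in distribution) is tested against bounded continuous functions, and on the compact periodic domain $\overline\Omega$ every such function is bounded, so the argument above covers the full class. If one instead wishes to prove convergence against $\cC^\infty_{\rm per}$ test functions in the distributional sense, the identical splitting works verbatim, so the choice of test-function class does not affect the argument.
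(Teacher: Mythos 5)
Your argument is correct, and it reaches the conclusion by a genuinely more economical route than the paper's own proof. The paper tests against smooth $\psi$, splits the integral over the $\eps$-dependent sublevel set $\phi_{\le \eps^\eta}$ (with $\phi=(f-f_{\min})^\beta$ and $0<\eta<\tfrac{2\beta-d}{2\beta}$) and its complement, and re-derives the far-field mass estimate directly from the lower bound on $Z_{\bar u}$ in Lemma~\ref{LMMA:Well}; this buys explicit rates of the form $\eps^{\eta/(2\beta)}+\eps^{\frac{2\beta-d}{2\beta}-\eta}$, at the price of using the Lipschitz bound $|\psi(\bx)-\psi(\bx_*)|\lesssim|\bx-\bx_*|$ and of a separate logarithmic treatment of the case $\beta=d/2$. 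You instead fix a ball $\cB_\rho(\bx_*)$ whose radius depends only on the test function's modulus of continuity, use~\ref{itm:A3} to embed its complement in a superlevel set of $f$, and invoke Lemma~\ref{LMMA:Concentration} as a black box; this is shorter, needs only continuity of the test function, and handles $\beta>d/2$ and $\beta=d/2$ uniformly, but yields no quantitative rate. One cosmetic point: Lemma~\ref{LMMA:Concentration} bounds the mass of $\{f>\mathfrak f\}$, whereas your inclusion places the far field in $\{f\ge \mathfrak f\}$; taking $\mathfrak f=f_{\min}+b\rho^2/2$ (or noting that the lemma's proof bounds the closed superlevel set just as well) removes the mismatch. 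This does not affect the validity of the argument.
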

\begin{proof}
Let $\psi(\bx)\in\cC^\infty(\overline\Omega)$ be a given function. We first assume $\beta>d/2$. Define $\zeta:=\frac{2\beta-d}{2\beta}$ and take $\eta\in(0, \zeta)$. We then have, using the same decomposition as in the proof of the previous lemma, that
\begin{equation}\label{EQ:Decom Delta Proof}
   \int_\Omega \psi(\bx) \bar u(\bx) d\bx-\psi(\bx_*) = \frac{1}{Z_{\bar u}} \int_{\phi_{\le \eps^\eta}} \frac{\psi(\bx)-\psi(\bx_*)}{\phi(\bx)+\eps} d\bx+\frac{1}{Z_{\bar u}} \int_{\phi_{\le \eps^\eta}^c} \frac{\psi(\bx)-\psi(\bx_*)}{\phi(\bx)+\eps} d\bx\,.
\end{equation}
By assumption~\ref{itm:A3}, when $\eps$ is sufficiently small, $\phi(\bx) \le \eps^\eta$ implies that $b|\bx-\bx_*|^2\le \eps^{\eta/\beta}$, that is, $|\bx-\bx_*|\le b^{-1/2}\eps^{\eta/2\beta}$. This then implies that $|\psi(\bx)-\psi(\bx_*)|\le C |\bx-\bx_*| \le \wt C \eps^{\eta/2\beta}$ for some positive constants $C$ and $\wt C$. Therefore, the first term on the right-hand side can be bounded as
\begin{multline*}
     \left|\frac{1}{Z_{\bar u}} \int_{\phi_{\le \eps^\eta}} \frac{\psi(\bx)-\psi(\bx_*)}{\phi(\bx)+\eps} d\bx\right| \le \frac{1}{Z_{\bar u}} \int_{\phi_{\le \eps^\eta}} \frac{|\psi(\bx)-\psi(\bx_*)|}{\phi(\bx)+\eps} d\bx \\ \le \|\psi(\bx)-\psi(\bx_*)\|_{L^\infty(\phi_{\le \eps^\eta})}\frac{1}{Z_{\bar u}} \int_{\phi_{\le \eps^\eta}} \frac{1}{\phi(\bx)+\eps} d\bx \le \|\psi(\bx)-\psi(\bx_*)\|_{L^\infty(\phi_{\le \eps^\eta})}\le \wt C\eps^{\frac{\eta}{2\beta}}\,.
\end{multline*}

The second term on the right-hand side can be bounded as follows
\begin{multline*}
    \left|\frac{1}{Z_{\bar u}} \int_{\phi_{\le \eps^\eta}^c} \frac{\psi(\bx)-\psi(\bx_*)}{\phi(\bx)+\eps} d\bx\right| \le \frac{2\|\psi\|_{L^\infty(\Omega)}}{Z_{\bar u}} \int_{\phi_{\le \eps^\eta}^c} \frac{1}{\phi(\bx)+\eps} d\bx\\
    \le \frac{2\|\psi\|_{L^\infty(\Omega)}}{Z_{\bar u}} \int_{\phi_{\le \eps^\eta}^c} \frac{1}{\eps^\eta} d\bx\le \frac{2\|\psi\|_{L^\infty(\Omega)}}{Z_{\bar u}} \frac{V_\Omega}{\eps^\eta} \le \bar C \eps^{\frac{2\beta-d}{2\beta}-\eta}\,,
\end{multline*}
where we have used in lower bound~\eqref{EQ:Lower Bounds} of $Z_{\bar u}$ in the last step. Both terms go to $0$ when $\eps\to 0$. Therefore we have shown that  $\dint_\Omega \psi(\bx) \bar u(\bx) d\bx-\psi(\bx_*)\to 0$ as $\eps\to 0$.   This shows that $\bar u(\bx) \to \delta(\bx-\bx_*)$ weakly. The case of $\beta=d/2$ can be proved in exactly the same way if we replace the set $\phi_{\le \eps^\eta}$ by the set $\phi_{\le 1/ \sqrt{\log(1/\eps)}}$ in the above calculations. Indeed, when $\eps$ is sufficiently small, $\phi(\bx) \le (\log(1/\eps))^{-\frac{1}{2}}$ implies that $|\bx-\bx_*|\le b^{-1/2}(\log(1/\eps))^{-\frac{1}{2\beta}}$ which then implies that $|\psi(\bx)-\psi(\bx_*)|\le \wt C (\log(1/\eps))^{-\frac{1}{2\beta}}$ (with $\wt C$ the same as that in the case of $\beta>d/2$). Therefore, we can bound the two terms in the decomposition~\eqref{EQ:Decom Delta Proof} in this case, respectively as
\[
     \left|\frac{1}{Z_{\bar u}} \int_{\phi_{\le 1/ \sqrt{\log(1/\eps)}}} \frac{\psi(\bx)-\psi(\bx_*)}{\phi(\bx)+\eps} d\bx\right| \le  \|\psi(\bx)-\psi(\bx_*)\|_{L^\infty(\phi_{\le 1/ \sqrt{\log(1/\eps)}}}\le \wt C(\log(1/\eps))^{-\frac{1}{2\beta}}\,.
\]
and
\[
    \left|\frac{1}{Z_{\bar u}} \int_{\phi_{\le 1/ \sqrt{\log(1/\eps)}}^c} \frac{\psi(\bx)-\psi(\bx_*)}{\phi(\bx)+\eps} d\bx\right|
    \le \frac{2\|\Psi\|_{L^\infty(\Omega)}}{Z_{\bar u}} \frac{V_\Omega}{(\log(1/\eps))^{-\frac{1}{2}}} \le \bar C (\log(1/\eps))^{-\frac{1}{2}}\,.
\]
Both terms go to $0$ when $\eps\to 0$. The proof is now complete.
\end{proof}

\subsection{Proofs of Theorem~\ref{thm:main} and Corollary~\ref{coro:convergence in p}}

We now prove Theorem~\ref{thm:main}. We split the proof into a few steps.

\begin{lemma}\label{lem:energy bound}
    Let $u$ and $\bar{u}$ be solutions to~\eqref{eq:eqn_u} and \eqref{eq:eqn_u_bar} respectively, and 
    $s(t) := \|u-\bar{u}\|_{L^2(\mu)}$. 
    Then there exists $C>0$ such that 
    \begin{equation}\label{eq:st_1}
        \frac{ds}{dt} \leq - C\eps^2 s - \sqrt{V_\Omega} \, \dfrac{d\eps}{dt}\,  Z_{\bar u}^{-1}\, \eps^{-\frac{3}{2}}\,.
    \end{equation}
\end{lemma}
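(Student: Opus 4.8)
The plan is to work with the squared quantity $s^2=\int_\Omega w^2 D_\eps\,d\bx$, where $w:=u-\bar u$, and to differentiate it in time, recovering~\eqref{eq:st_1} at the end through $\frac{ds}{dt}=\frac{1}{2s}\frac{d(s^2)}{dt}$. Since $D_\eps=\phi+\eps(t)$ depends on $t$ only through $\eps$, so that $\partial_t D_\eps=\frac{d\eps}{dt}$, differentiating under the integral gives
\[
\frac{d}{dt}s^2 = 2\int_\Omega w\,(\partial_t u-\partial_t\bar u)\,D_\eps\,d\bx + \frac{d\eps}{dt}\int_\Omega w^2\,d\bx .
\]
Into the first integral I substitute the equation $\partial_t u=\Delta(D_\eps u)$ from~\eqref{eq:eqn_u} and use the defining property $D_\eps\bar u=Z_{\bar u}^{-1}$ from~\eqref{EQ:Stationary Dist}, which is a \emph{spatial} constant; hence $D_\eps u=Z_{\bar u}^{-1}+D_\eps w$ and $\Delta(D_\eps u)=\Delta(D_\eps w)$. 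Writing $h:=D_\eps w$ and integrating by parts on the torus (no boundary terms by periodicity), the leading contribution becomes the dissipation $-2\int_\Omega|\nabla h|^2\,d\bx$. This splits $\frac{d}{dt}s^2$ into this dissipative term, a cross term $-2\int_\Omega w\,D_\eps\,\partial_t\bar u\,d\bx$ produced by the motion of the instantaneous equilibrium, and the manifestly nonpositive term $\frac{d\eps}{dt}\int_\Omega w^2\,d\bx$ (recall $\frac{d\eps}{dt}<0$), which I simply discard.

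To convert the dissipation into the form $-C\eps^2 s^2$ I would establish a weighted Poincaré estimate. The crucial observation is the constraint $\int_\Omega h\,D_\eps^{-1}\,d\bx=\int_\Omega w\,d\bx=0$, valid because both $u$ and $\bar u$ are probability densities, so $h$ has zero mean against $D_\eps^{-1}d\bx$. Completing the square against this measure yields $\int_\Omega h^2 D_\eps^{-1}\,d\bx\le\int_\Omega(h-\aver{h})^2 D_\eps^{-1}\,d\bx$ for the Lebesgue mean $\aver{h}:=V_\Omega^{-1}\int_\Omega h\,d\bx$; bounding $D_\eps^{-1}\le\eps^{-1}$ and applying the standard periodic Poincaré inequality $\int_\Omega(h-\aver{h})^2\,d\bx\le\lambda_1^{-1}\int_\Omega|\nabla h|^2\,d\bx$ (with $\lambda_1$ the first nonzero eigenvalue of $-\Delta$ on $\Omega$) gives $\int_\Omega|\nabla h|^2\,d\bx\ge\lambda_1\eps\,s^2\ge\lambda_1\eps^2 s^2$, the last step using $\eps(t)\le 1$ from~\eqref{EQ:Epst}. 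Thus the dissipation is at most $-2\lambda_1\eps^2 s^2$, which is precisely the $-C\eps^2 s^2$ term with $C=\lambda_1$.

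For the cross term I would apply Cauchy--Schwarz in $L^2(\mu)$, obtaining $\bigl|2\int_\Omega w\,D_\eps\,\partial_t\bar u\,d\bx\bigr|\le 2s\bigl(\int_\Omega(\partial_t\bar u)^2 D_\eps\,d\bx\bigr)^{1/2}$, and then reduce to estimating $\int_\Omega(\partial_\eps\bar u)^2 D_\eps\,d\bx$ since $\partial_t\bar u=\frac{d\eps}{dt}\,\partial_\eps\bar u$. Differentiating $\bar u=Z_{\bar u}^{-1}D_\eps^{-1}$ \emph{including} the normalization $Z_{\bar u}=\int_\Omega D_\eps^{-1}\,d\bx$ gives $\partial_\eps\bar u=Z_{\bar u}^{-2}\bigl(\int_\Omega D_\eps^{-2}\bigr)D_\eps^{-1}-Z_{\bar u}^{-1}D_\eps^{-2}$; squaring, weighting by $D_\eps$, and integrating produces the clean identity
\[
\int_\Omega (\partial_\eps\bar u)^2 D_\eps\,d\bx = Z_{\bar u}^{-2}\int_\Omega D_\eps^{-3}\,d\bx - Z_{\bar u}^{-3}\Big(\int_\Omega D_\eps^{-2}\,d\bx\Big)^2 ,
\]
whose subtracted term is nonnegative. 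Dropping it and using $D_\eps^{-3}\le\eps^{-3}$ gives $\int_\Omega(\partial_\eps\bar u)^2 D_\eps\,d\bx\le V_\Omega Z_{\bar u}^{-2}\eps^{-3}$, so the cross term is bounded by $-2s\,\frac{d\eps}{dt}\sqrt{V_\Omega}\,Z_{\bar u}^{-1}\eps^{-3/2}$. Collecting the three contributions and dividing by $2s$ then reproduces~\eqref{eq:st_1} exactly, with $C=\lambda_1$.

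The main obstacle I anticipate is the bookkeeping around the moving equilibrium: because both the weight $D_\eps$ and the target $\bar u$ evolve through $\eps(t)$, the time derivative manufactures the extra cross term, and it is the exact cancellation in $\int_\Omega(\partial_\eps\bar u)^2 D_\eps\,d\bx$ that pins down the sharp coefficient $\sqrt{V_\Omega}\,Z_{\bar u}^{-1}\eps^{-3/2}$ rather than a larger one. A secondary technical point is regularity: differentiating under the integral and integrating by parts must be justified, which is legitimate because for each fixed $t$ the operator is uniformly parabolic ($D_\eps\ge\eps>0$), so the relevant solutions are smooth; one should also handle the set $\{s=0\}$ separately when passing from the $s^2$ estimate back to the stated bound on $\frac{ds}{dt}$.
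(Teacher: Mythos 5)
Your proof is correct and follows the same energy-estimate skeleton as the paper: differentiate $s^2=\int_\Omega (u-\bar u)^2 D_\eps\,d\bx$, integrate by parts to produce the dissipation $-\int_\Omega|\nabla(D_\eps v)|^2\,d\bx$, discard the sign-definite term coming from $\partial_t D_\eps=\eps'(t)\le 0$, and control the equilibrium-drift term by Cauchy--Schwarz. The two places where you genuinely diverge are both worth noting. First, for the dissipation the paper invokes the weighted Poincar\'e inequality for Muckenhoupt $A_2$ weights (Theorem~\ref{thm:wpi}, with $[D_\eps^{-1}]_2\lesssim \eps^{-1}$), which it treats as a key external ingredient; you replace this by the elementary chain $\int h^2D_\eps^{-1}\le\int(h-\langle h\rangle)^2D_\eps^{-1}\le\eps^{-1}\int(h-\langle h\rangle)^2\le\eps^{-1}\lambda_1^{-1}\int|\nabla h|^2$, using only the zero-mean condition $\int hD_\eps^{-1}=\int v=0$, the pointwise bound $D_\eps^{-1}\le\eps^{-1}$, and the ordinary periodic Poincar\'e inequality. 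This is not only more elementary but yields the sharper intermediate bound $T_3\le-\lambda_1\eps\,s^2$, which you then deliberately weaken to $-C\eps^2s^2$ to match the statement; carried through, the extra factor of $\eps$ would improve the exponent $\gamma$ in Theorem~\ref{thm:main}. Second, for the drift term the paper first uses $\int v\,d\bx=0$ to kill the $\partial_t(Z_{\bar u}^{-1})$ contribution and then bounds the remaining integral via $D_{\min}^{-3/2}\le\eps^{-3/2}$ and Cauchy--Schwarz, whereas you keep the full $\partial_\eps\bar u$ (including the normalization derivative) and exploit the exact variance identity $\int(\partial_\eps\bar u)^2D_\eps=Z_{\bar u}^{-2}\int D_\eps^{-3}-Z_{\bar u}^{-3}(\int D_\eps^{-2})^2$, dropping the nonnegative subtracted piece. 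Both routes land on the same coefficient $\sqrt{V_\Omega}\,Z_{\bar u}^{-1}\eps^{-3/2}$, so this difference is cosmetic; the Poincar\'e step is the substantive one.
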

\begin{proof}
We define $v = u-\bar{u}$ and thus $s(t) =  \|v\|_{L^2(\mu)}:=\Big(\int_\Omega v^2 D_\eps d\bx \Big)^{1/2}$. It is easy to see that $v$ is $\Omega$-periodic and we have, from~\eqref{eq:eqn_u} and~\eqref{eq:eqn_u_bar}, that $v$ solves
\begin{equation}\label{eq:eqn_v}
 \partial_t v(\bx,t)=  \Delta \Big( D_\eps(\bx,t) v(\bx,t) \Big)-\partial_t \bar{u}(\bx,t)\,.
\end{equation}
Multiplying both sides by $D_\eps(\bx,t) v(\bx,t)$, and integrating over the spatial domain $\Omega$ using the periodic boundary condition then leads to the identity
\begin{eqnarray}
\underbrace{ \frac{1}{2} \partial_t \left(  \int_\Omega D_\eps \, |v|^2 d\bx \right)}_{T_1}  +  \underbrace{ \frac{1}{2}\int_\Omega  \left( -\frac{\partial D_\eps}{\partial t} \right) \, |v|^2 d\bx}_{T_2}  &= & \underbrace{ -\int_\Omega | \nabla \left( D_\eps v\right)|^2 d\bx}_{T_3}-\underbrace{ \int_\Omega D_\eps v \,  \bar{u}_t d\bx }_{T_4}  .\label{eq:1234}
\end{eqnarray}

We first observe that term $T_1$ is simply
\begin{equation}
    T_1  = \frac{1}{2} \partial_t \left(\| v \|_{L^2\left(\mu \right)}^2 \right) = s(t)  \,  \frac{d s}{dt}.
\end{equation}
For the term $T_2$, we observe from~\eqref{EQ:Deps} and~\eqref{EQ:Epst} that $\dfrac{\partial D_\eps}{\partial t}=\dfrac{d\eps(t)}{dt}=-\alpha\, (1+t)^{-\alpha-1}\le 0$. Therefore, $T_2 \geq 0$.

To estimate the term $T_3$, we will apply the weighted Poincar\'e inequality in Theorem~\ref{thm:wpi}. In our case, we consider the weight 
$$
    w(x) = D_\eps(\bx,t)^{-1} = \frac{1}{(f(\bx)-f_{\min})^\beta + \eps(t)},
$$ 
in the $A_2$ class (see~\Cref{DEF:Ap}). We can find an upper bound of its $A_2$ constant $[w]_2$ as
\begin{multline}\label{eq:constant_our_setting}
    [w]_2 = \sup_{Q\subset \R^d} \left({\frac {1}{V_Q }}\int _{Q}w (\bx)\,d\bx\right)\left({\frac {1}{V_Q}}\int _{Q}w (\bx)^{-1}\,d\bx\right) \\ \leq \frac{\max_{\bx\in \overline\Omega} w(\bx) }{\min_{\bx\in \overline \Omega} w(\bx)  }  
    = \frac {(f_{\max}-f_{\min})^\beta + \eps(t)}{\eps(t)} \leq \frac {(f_{\max}-f_{\min})^\beta + \eps(0)}{\eps(t)}  = C_\beta\, \eps^{-1},
\end{multline}
where $f_{\max} = \max_{\bx\in\overline\Omega} f(\bx)$, and the constant 
\begin{equation}\label{eq:max D}
    C_\beta = (f_{\max}-f_{\min})^\beta + \eps(0) =(f_{\max}-f_{\min})^\beta + 1= \max_{\bx\in\Omega,\,t\in [0,\infty) } D_\eps(\bx,t)
\end{equation} 
is independent of time $t$. Based on~\eqref{eq:wpi}, for the hypercube $\Omega$ and a Lipschitz function $v$ satisfying $\int_\Omega v \, D_\eps^{-1}  d\bx = 0$, we have
\[
\int_\Omega |v|^2 D_\eps^{-1}  d\bx  \leq \frac{1}{C \eps} \int_\Omega |\nabla v|^2 D_\eps^{-1}   d\bx,
\]
where the constant $C=\left(C_d^2\, \ell_\Omega^2\, C_\beta \right)^{-1}$ with $\ell_\Omega$ the edge length of the hypercube $\Omega$ and $C_d$ the constant introduced in~\eqref{eq:wpi}. We therefore have,
\begin{multline*}
T_3\leq
  -\int_\Omega  | \nabla \left( D_\eps v \right)|^2 \, \frac{\eps}{D_\eps}  d\bx  = -\eps \int_\Omega  | \nabla \left( D_\eps v \right)|^2\, D_\eps^{-1}\, d\bx 
  \leq  - C \eps^2  \int_\Omega  | D_\eps v|^2\, D_\eps^{-1}\, d\bx = -  C\eps^2 s(t)^2.
\end{multline*}

The last term $T_4$ can be bounded from below as follows. We first rewrite the term using $\bar{u} =  Z_{\bar u} D_\eps^{-1}$ from~\eqref{EQ:Stationary Dist}:
\begin{eqnarray*}
T_4 &=& \int_\Omega D_\eps v\, \partial_t \left( Z_{\bar u}^{-1}  D_\eps ^{-1}  \right) d\bx\\
&=& \int_\Omega  v \, \partial_t  \left( Z_{\bar u}^{-1} \right)  d\bx - \int_\Omega v Z_{\bar u}^{-1} \, \frac{dD_\eps}{dt} D_\eps^{-1} d\bx\\
&=& \partial_t  \left(Z_{\bar u}^{-1} \right)\,  \int_\Omega  v \,d\bx  - Z_{\bar u}^{-1}\, \frac{d\eps}{dt}\, \int_\Omega D_\eps^{\frac{1}{2}}v\, D_\eps^{-\frac{3}{2}} d\bx \\
&=& - Z_{\bar u}^{-1}\, \frac{d\eps}{dt}\, \int_\Omega D_\eps^{\frac{1}{2}}v\, D_\eps^{-\frac{3}{2}} d\bx\,,
\end{eqnarray*}
where we have used the facts that $\dfrac{dD_\eps}{dt}=\dfrac{d\eps}{dt}$ and $\dint_\Omega v\, d\bx = 0$ $\forall t>0$.

Using the decomposition $v = v^+ - v^ -$, where $v^+ = \max(v,0)$ and $v^{-} = - \min(v,0)$, we have that $|D_\eps^{\frac{1}{2}}v| = D_\eps^{\frac{1}{2}}v^+ +  D_\eps^{\frac{1}{2}}v^- $. Moreover, it is easy to check that
\begin{align*}%
\int_\Omega D_\eps^{\frac{1}{2}}v\, D_\eps^{-\frac{3}{2}} d\bx
 & \geq D_{\max}^{-\frac{3}{2}} \int_\Omega D_\eps^{\frac{1}{2}}v^+\, d\bx - D_{\min} ^{-\frac{3}{2}} \int_\Omega D_\eps^{\frac{1}{2}}v^-\, d\bx \\
& \geq - D_{\min} ^{-\frac{3}{2}} \|D_\eps^{\frac{1}{2}}v \|_{L^1(\Omega)} \geq - 
\frac{\sqrt{V_\Omega}}{\eps^{\frac{3}{2}}} \| v \|_{L^2\left(\mu \right)}\,,
\end{align*}
where $D_{\max}:=\max_{\bx\in\overline\Omega} D_\eps$, $D_{\min}:=\min_{\bx\in\overline\Omega} D_\eps$, and $d \mu = D_\eps\, d\bx$. This leads to
\begin{equation}
    T_4 \geq  Z_{\bar u}^{-1}\, \dfrac{d\eps}{dt} \,  \eps^{-\frac{3}{2}} \, \sqrt{V_\Omega}\, s(t)\,.
\end{equation}

Finally, we combine all four terms in~\eqref{eq:1234} to have $T_1 = T_3 - T_4 - T_2  \leq T_3 - T_4$. The inequality~\eqref{eq:st_1} then follows. 
\end{proof}

We are now ready to prove the main result~\Cref{thm:main}.

\begin{proof}[Proof of Theorem~\ref{thm:main}]
Using the lower bound on $Z_{\bar u}$ given in Lemma~\ref{LMMA:Well}, as well as the assumption that $\eps(t) = (1+t)^{-\alpha}$, we can further relax~\eqref{eq:st_1} to obtain, after a change of variable $1+t\to t$,
\begin{equation}\label{eq:st_2}
    s_t \leq -C\, t^{-2\alpha} s + C_2\, t^{\ell\alpha - 1},\ \ \ t\ge 1, \qquad \ell = \frac{d}{2\beta} - \frac{1}{2},
\end{equation}
where  $C_2$ is a positive constant and $\beta \geq d/2$. 

Next, we find an upper bound for $s(t)$. First, we discuss the case $\alpha \neq 1/2$.  Define $C_\alpha = \frac{C}{1-2\alpha}$, and 
\begin{eqnarray*}
y(t) &:=& \frac{1}{{2 \alpha -1}} \exp\left(-\frac{C t^{1-2 \alpha }}{1-2 \alpha }\right) = \frac{1}{{2 \alpha -1}} \exp\left(- C_\alpha t^{1-2 \alpha }\right) \, ,  \\
h(t) &:= & \Gamma \left(\frac{\ell \alpha }{1-2 \alpha }, - C_\alpha \, t^{1-2\alpha} \right) = \int_{ - C_\alpha \, t^{1-2\alpha}}^\infty \tau^{\frac{\ell \alpha }{1-2 \alpha } -1 } e^{-\tau} d\tau\, , \\
C_5 &:=& C_2\left(\frac{C}{2 \alpha -1}\right)^{\frac{\ell \alpha }{2 \alpha -1}}  = C_2 \left( - C_\alpha \right)^{\frac{\ell \alpha }{2 \alpha -1}}\, ,\\
C_3 &:=& C_2\left(\frac{C}{2 \alpha -1}\right)^{\frac{\ell \alpha }{2 \alpha -1}} \Gamma \left(\frac{\ell \alpha }{1-2 \alpha },\frac{C}{2 \alpha -1}\right) = C_5\, \Gamma \left(\frac{\ell  \alpha }{1-2 \alpha }, - C_\alpha \right) \, ,\\
C_4 &:=& ( 2 \alpha -1)s(1)  \exp \left(\frac{C}{1-2 \alpha }\right) =( 2 \alpha -1)s(1)  \exp \left(C_\alpha\right)\, ,
\end{eqnarray*}
where $\Gamma (s,x) = \int_{x}^\infty t^{s-1} e^{-t} dt $ is the upper incomplete gamma function. Note that $\Gamma (s,0) = \Gamma(s)$.
The solution to the ODE $\bar{s}_t =  -C t^{-2\alpha} \bar{s} + C_2 t^{\ell \alpha - 1} $ with the initial condition $s(1) = \bar{s}(1)$ is
\begin{equation} \label{eq:true ode sol}
      \bar{s}(t) = (C_4  -C_3) y(t)   + C_5\, y(t) \, h(t) \geq s(t) ,
\end{equation}

If $1-2\alpha <  0$, (i.e., $\alpha > 1/2$), we have 
$$ y(t) \xrightarrow{t\rightarrow \infty}\frac{1}{{2 \alpha -1}},\qquad h(t) \xrightarrow{t\rightarrow \infty}  \Gamma \left(\frac{\ell \alpha }{1-2 \alpha }\right),
$$
both converging to constants. From~\eqref{eq:true ode sol},  $\bar{s}(t)$ converges to a constant and we do not have an upper bound decay for $s(t)$ in this analysis framework.

If $1-2\alpha > 0$, (i.e., $0< \alpha <  1/2$), we have  $y(t) \xrightarrow{t\rightarrow \infty} 0$ exponentially. Since $-C_\alpha \, t^{1-2\alpha} < 0$ for $t\geq 1$, $h(t)$ is a complex-valued scalar with both the real and  imaginary parts  going to $-\infty$ as $t\rightarrow +\infty$. It is worth noting that  $e^{-x} \Gamma(s,-x) \approx x^{s-1}$ when $x$ is sufficiently large, so when $t$ is large, we have
\[
y(t) \, h(t) \approx \frac{1}{2\alpha-1} \left( C_\alpha \, t^{1-2\alpha} \right)^{\frac{\ell \alpha }{1-2 \alpha } -1 } 
= C_6\, t^{(\ell+2)\alpha - 1},
\]
while $y(t) = \frac{1}{2\alpha-1} e^{- C_\alpha t^{1-2 \alpha }} < 0 $ based on its definition and $C_6$ is some positive constant. Thus, $s(t) \leq \bar{s}(t) \lesssim  t^{(\ell+2)\alpha - 1}$.
In order for the upper bound to decay to zero, we need $(\ell+2)\alpha - 1  < 0$, i.e., 
\[
0 < \alpha < \min \left(\frac{1}{2},\frac{1}{\ell + 2} \right) =  \min \left(\frac{1}{2},  \frac{2\beta}{d + 3\beta}\right).
\]

When $\alpha = 1/2$, we need to consider the ODE 
$$
\bar{s}_t =  -C\, t^{-1} \bar{s} + C_2\, t^{\ell/2 - 1} \,,
$$ 
where $s(1) = \bar{s}(1)$  as the initial condition. It has an analytical solution. We then have
\[
s(t) \leq \bar{s}(t) =  \frac{2C_2}{2C + \ell}  t^{\ell/2} + \frac{ s(1)(2C+\ell) - 2C_2}{2C + \ell} t^{-C} \lesssim t^{\ell/2}.
\]
If $\ell = \frac{d}{2\beta} -\frac{1}{2} < 0$, i.e., $\beta > d$, we will have an energy decay when $\alpha = 1/2$. 

To sum up, when $\alpha$, $\beta$ and $d$ are chosen to satisfy
\begin{equation}\label{eq:condition}
   \alpha \in \left(0,\frac{1}{2} \right] \cap \left(0,  \frac{2\beta }{d + 3\beta}\right),
\end{equation}
we have
\[
    s(t) \lesssim t^{(\ell+2)\alpha - 1} =  t^{-\gamma},\qquad \gamma =  1- (\ell+2)\alpha.
\]
This completes the proof.
\end{proof}

The energy estimates in~\Cref{thm:main} allows us to refine the result of~\Cref{LMMA:Concentration}. This is the result of~\Cref{coro:convergence in p}. We now prove this corollary.
\begin{proof}[Proof of~\Cref{coro:convergence in p}]
For a given $\delta > 0$, we have, based on Assumption~\ref{itm:A3} and the lower bound estimations for $Z_{\bar u}$ in~\Cref{LMMA:Well}, and after taking into account that $\eps(t)=(1+t)^{-\alpha} \sim t^{-\alpha}$ for large $t$, that
\[
    \int_{\Omega\cap \cB_\delta(\bx_*)^c} \bar{u}\, d\bx   = Z_{\bar u}^{-1} \int_{\Omega\cap \cB_\delta(\bx_*)^c} \frac{1}{D_\eps(\bx,t)} d\bx \leq Z_{\bar u}^{-1} \frac{V_\Omega}{b\delta^2}  \lesssim   (b\delta^2)^{-1} t^{\alpha(\frac{d}{2\beta}-1) }.
\]
On the other hand, we have
\begin{align*}
\int_{\Omega\cap \cB_\delta(\bx_*)^c} \left( u-\bar{u} \right) d\bx 
    \leq  \int_{\Omega\cap \cB_\delta(\bx_*)^c} |v|\,d\bx
    & \leq  \frac{1}{\sqrt{b\delta ^2 }}\int_{\Omega\cap \cB_\delta(\bx_*)^c} D_\eps^{\frac{1}{2}}\, |v|\, d\bx\\
    & \leq  \frac{\sqrt{V_\Omega}}{\sqrt{b\delta ^2 }} \|D_\eps^{\frac{1}{2}} v\|_{L^2(\Omega)} =  \frac{\sqrt{V_\Omega}}{\sqrt{b\delta ^2 }}\, s(t).
\end{align*}
Therefore, based on the upper bound estimate for $s(t)$ in~\Cref{thm:main}, we have
\begin{multline}\label{EQ:Prob Bound}
    \Prob \left(X_t \not \in \cB_\delta(\bx_*) \right) =  \int_{\Omega\cap \cB_\delta(\bx_*)^c} u\, d\bx= \int_{\Omega\cap \cB_\delta(\bx_*)^c} (u-\bar u)\, d\bx+\int_{\Omega\cap \cB_\delta(\bx_*)^c} \bar u\, d\bx\\
    \leq   C_1 (b\delta^2)^{-\frac{1}{2}} t^{(\frac{d}{2\beta} + \frac{3}{2})\alpha - 1} + \bar{C}_1 (b\delta^2)^{-1} t^{\alpha(\frac{d}{2\beta}-1) } \lesssim 
  t^{-\kappa} ,
\end{multline}
where $C_1, \bar{C}_1$ are positive constants, and $\kappa$ is defined in~\eqref{eq:kappa}. Now if we take $\delta=t^{-\nu}$ with $0<\nu<\min(\gamma, (1-\frac{d}{2\beta})\alpha/2)$, then~\eqref{EQ:Prob Bound} simplifies to
\[
    \Prob \left(X_t \not \in \cB_\delta(\bx_*) \right) \le C_1 b^{-\frac{1}{2}} t^{-(\gamma-\nu)} + \bar{C}_1 b^{-1} t^{-[(1-\frac{d}{2\beta})\alpha-2\nu]}
    \lesssim 
  t^{-\kappa'} ,
\]
where $\kappa'$ defined in~\eqref{eq:kappa2}.
\end{proof}

\subsection{Numerical Experiments}
\label{sec:numerics}

Next, we show a few numerical examples of global optimization to demonstrate the effectiveness of our proposed derivative-free algorithm. 

We will consider minimizing the following objective function $f(\bx)$ on the domain $\Omega = [0,4]^d$ where
\begin{equation}\label{eq:J}
    f(\bx)  = \frac{1}{d\, \bar{f}} \left( 0.3|\bx-2|^2 - \sum_{i=1}^d  \cos (4x_i -8) + d \right),\quad \bx = [x_1,\ldots,x_d]^\top,
\end{equation}
where $\bar{f} = 2.3455$ so that $f_{\max} =  \max_{\bx\in[0,4]^d} f(\bx) = 1$ for any dimension $d$. There is a unique global minimum of $f(\bx)$ at $\bx_* = [2,\ldots, 2]^\top \in \Omega$ with function value $f_{\min}=f(\bx_*)=0$. The shapes of the objective function in dimension $d=1$ and $d=2$ are illustrated in~\Cref{fig:obj_J}.
\begin{figure}[!htb]
    \centering
    \includegraphics[height = 4.5cm]{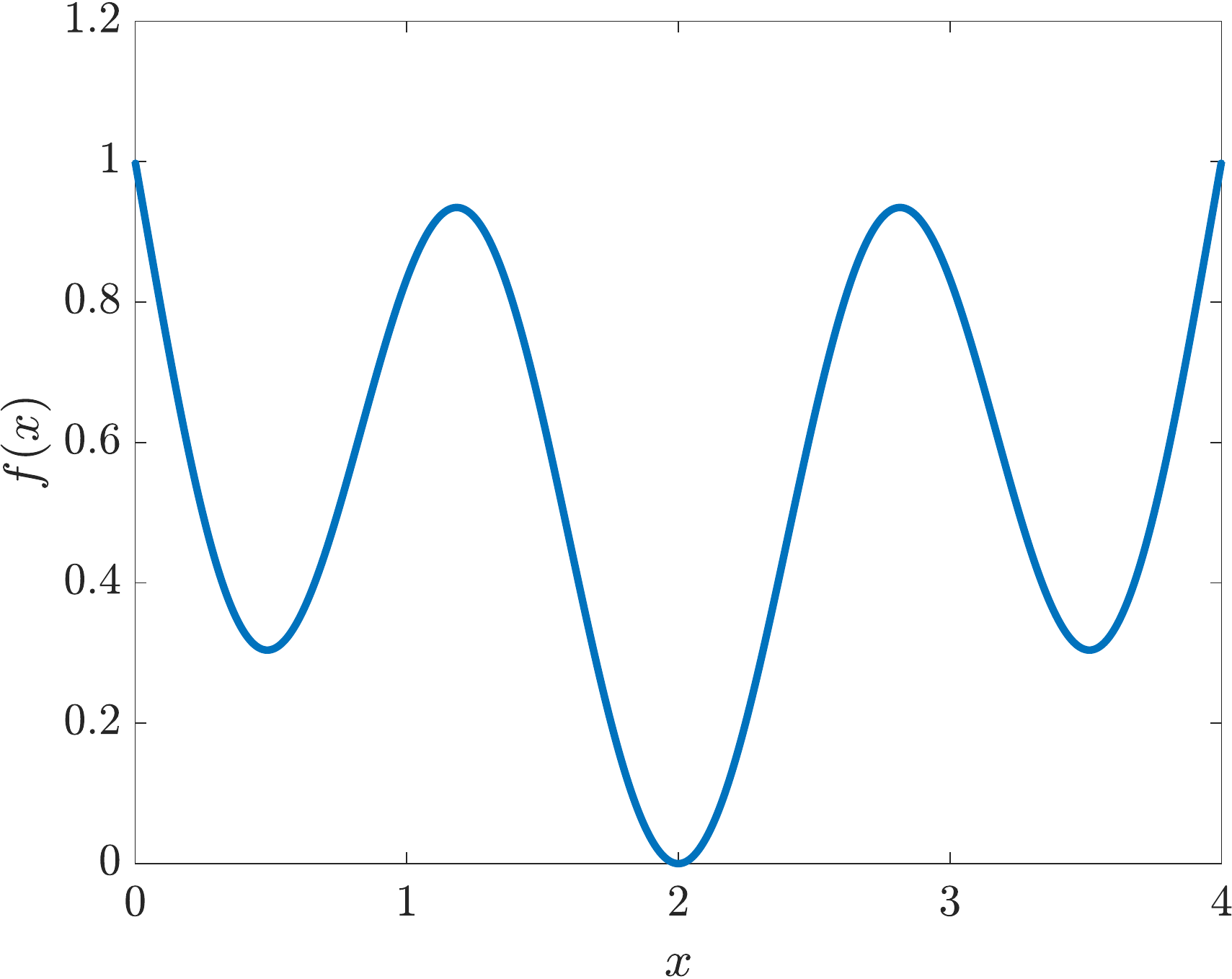} \hspace{1cm}
    \includegraphics[height = 4.5cm]{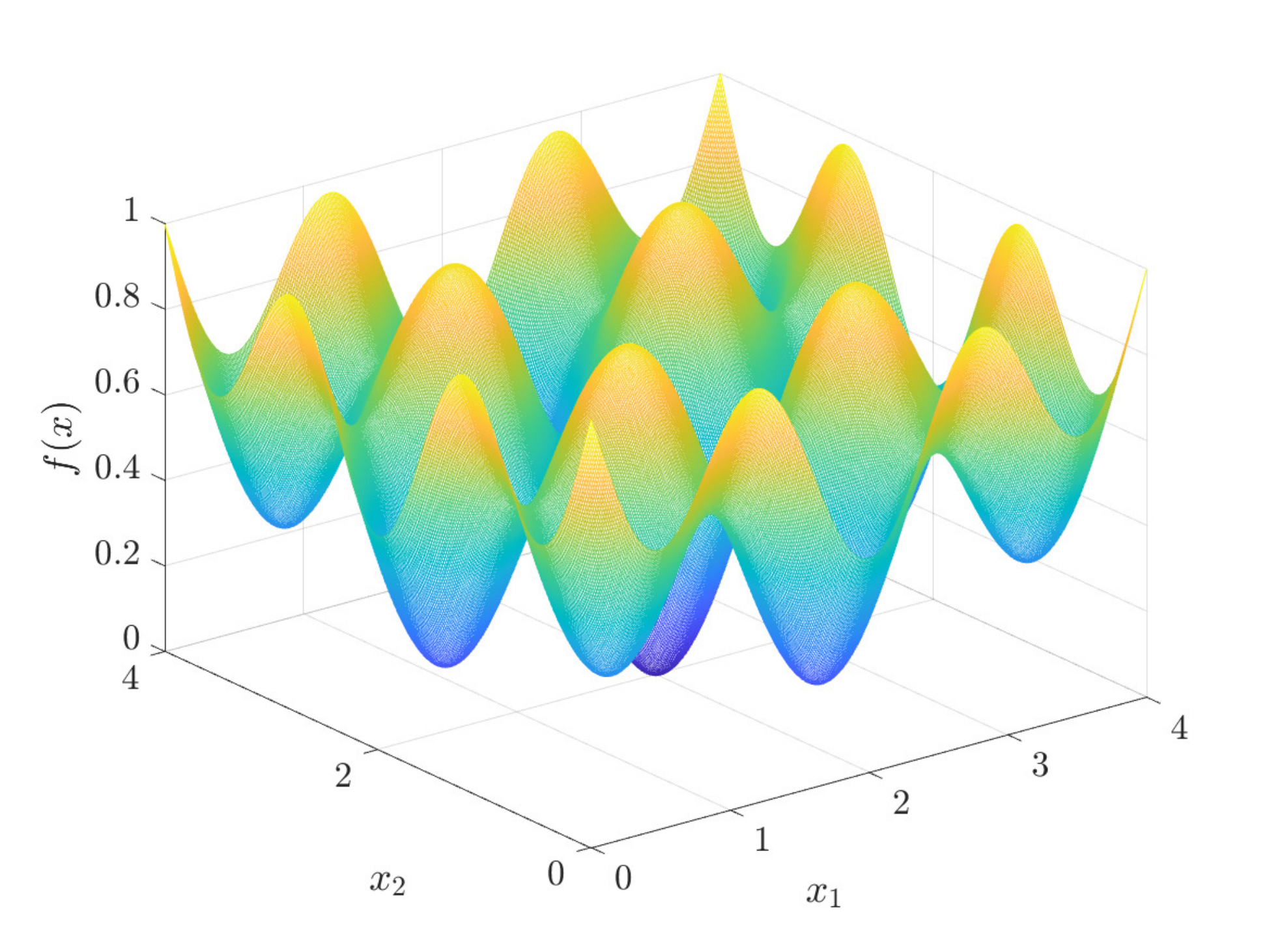}
    \caption{Optimization landscapes of the  objective function $f(\bx)$ in~\eqref{eq:J} in dimension $d=1$ (left) and $d=2$ (right).}
    \label{fig:obj_J}
\end{figure}

Our numerical simulations are based on the discrete algorithm~\eqref{EQ:Adaptive A}, where we fix the step size $\eta$ to be a constant. The standard deviation for the noise is taken as the discrete equivalence of~\eqref{EQ:Sigma}, i.e.,
\begin{equation}\label{EQ:Disc Sigma}
\sigma_n =\sqrt{2\Big[\Big((f(X_n)-f_{\min})^+\Big)^\beta+\eps\Big]}\,,\quad \eps = c n^{-\alpha}\,,
\end{equation}
where $c = 10^{-3}$ is a fixed scalar. This setup corresponds to the main results of the paper proved in~\Cref{SEC:Theory}. The update rule for the iterate is
\begin{equation}\label{eq:discrete_update}
    X_{n+1} = X_n + \sigma_n \xi_n,
\end{equation}
where $\xi_n\sim \mathcal{N}(0,I_d)$, the standard normal distribution on $\mathbb{R}^d$ with an enforced periodic boundary condition.
The standard deviation $\sigma_n$, or equivalently, the diffusion coefficient, is both state- and time-dependent.

\begin{figure}[!htb]
    \centering
    \includegraphics[width = 1.0\textwidth]{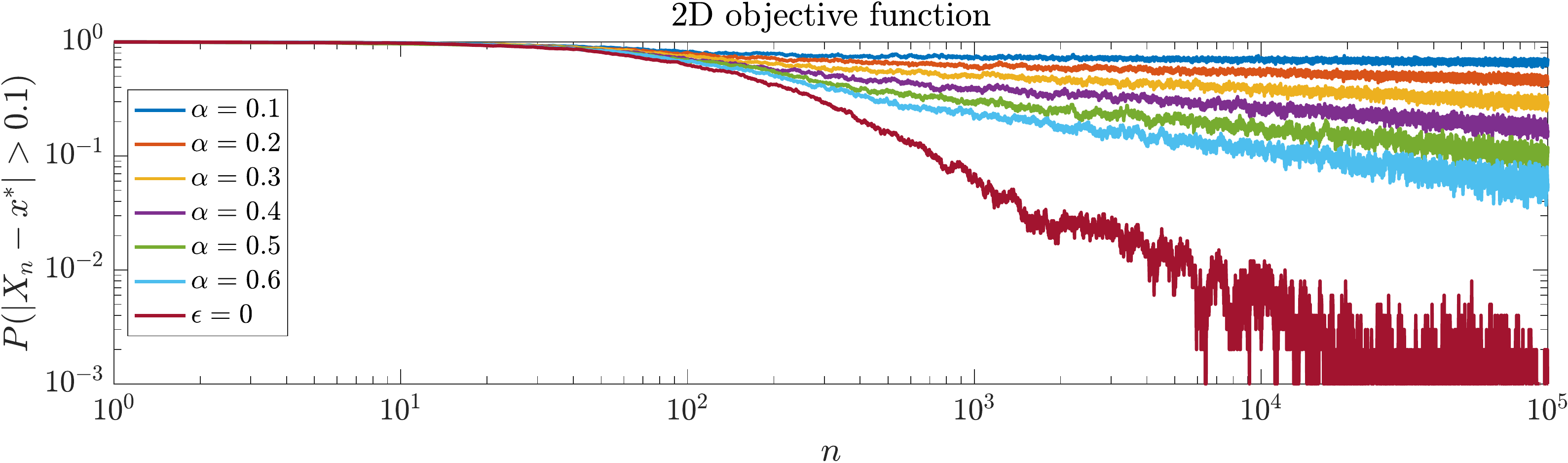}
    \caption{Convergence history of iteration~\eqref{EQ:Adaptive A} with $\sigma$ given in~\eqref{EQ:Disc Sigma} in the case of minimizing $f(\bx)$ of~\eqref{eq:J} in dimension $d=2$ with $\beta = 2$. Shown are results for different values of $\alpha$ after $10^5$ iterations.}
    \label{fig:2D alpha}
\end{figure}
The convergence histories are shown in~\Cref{fig:2D alpha}, for the case $d=2$ with $\beta=2$ where we vary the value of the parameter $\alpha$. Based on the log-log plots, we see that the choice of $\alpha$ directly affects the convergence speed in the discrete algorithm as the bigger the $\alpha$, the faster the convergence. 
While we will discuss more on the case of $\eps=0$ in~\Cref{SUBSEC:eps=0}, for the purpose of comparison, we include in~\Cref{fig:2D alpha} a plot for the case where the term $\eps = 0$ as the limit of $\alpha\rightarrow \infty$.

\section{Practical Generalizations}
\label{SEC:Gen}

The numerical results we presented in the previous section verified our theoretical analysis in \Cref{SEC:Theory}, where we assumed that the value of the global minimum of the objective function, $f_{\min}$,  is known and demonstrated that the algorithm could perform well in more complex situations. In this section, we provide further discussions on practical situations under which our algorithm performs almost as well as in the ideal case. 

We start with a numerical illustration for various cases regarding $f_{\min}$ and the value of $\eps$, which will be further discussed in~\Cref{SUBSEC:Unknown fmin,SUBSEC:eps=0} respectively. The left plots of \Cref{fig:2D epsilon compare} show single-run trajectories of the cases $\eps=0$ (top) and $\eps\neq 0$ (bottom) respectively, under the setting that $f_{\min}$ is known (and $f(\bx_*) = 0$). The case of $\eps=0$ is superior in stabilizing the iterates around the global minimum.  The right plots of~\Cref{fig:2D epsilon compare} are results on the trajectory of one single run with $\eps= 0$ when $f_{\min}$ is unknown (and estimated with the method in~\Cref{SUBSEC:Unknown fmin}). The iterates get stuck at a wrong position very quickly, showing that we cannot set $\eps=0$ when $f_{\min}$ is unknown, in contrast to the two left plots. We will elaborate on these observations more in this section.

\begin{figure}[!htb]
\centering
\subfloat[history of the iterates from a single run]{
\includegraphics[width = 0.43\textwidth]{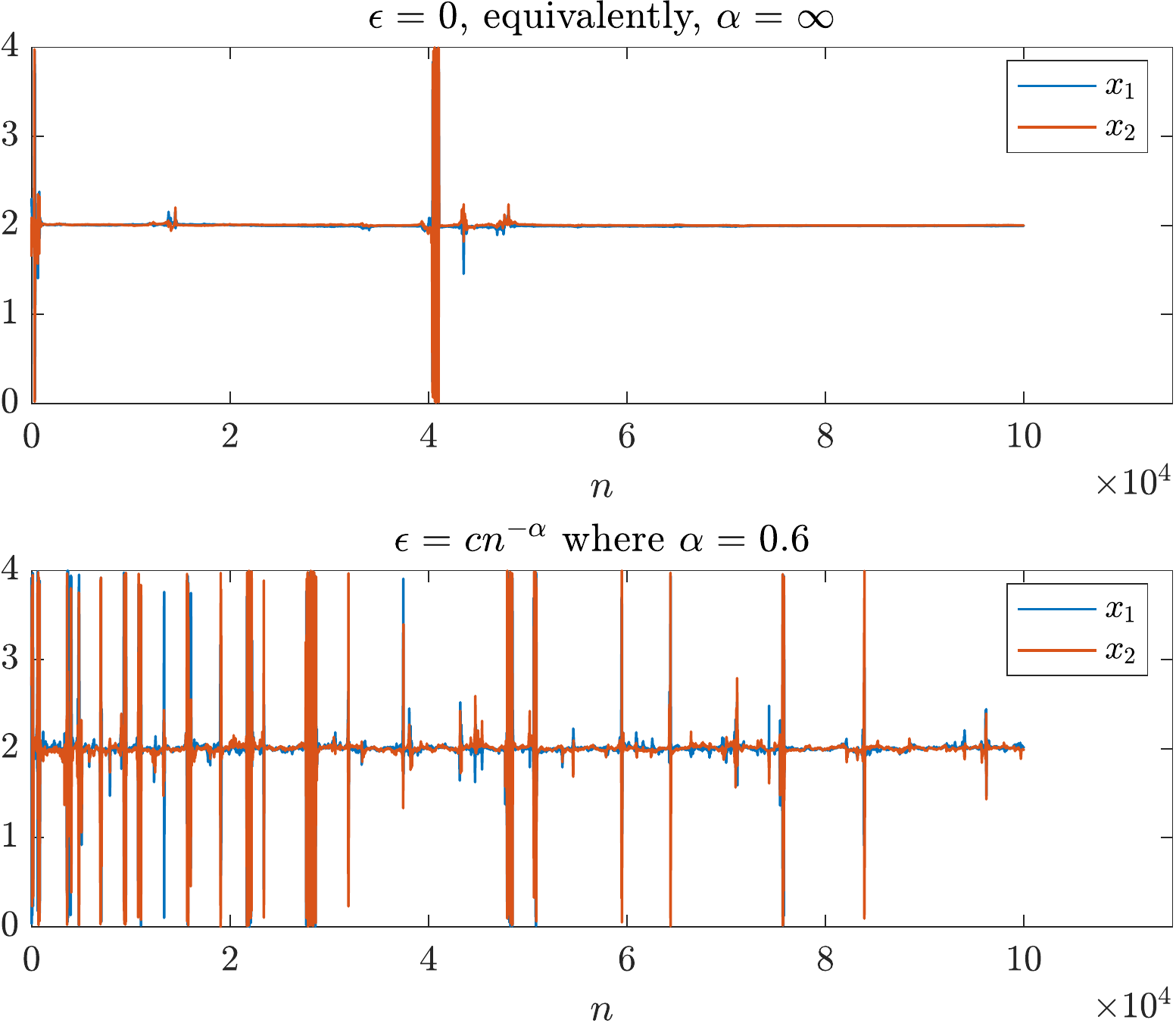}\label{fig:2D epsilon compare 1}}
\subfloat[trajectory of $X_n$, $f_{\min}^n$ and $D_n(X_n) = (\sigma_n)^2/2$]{\includegraphics[width  = 0.49\textwidth]{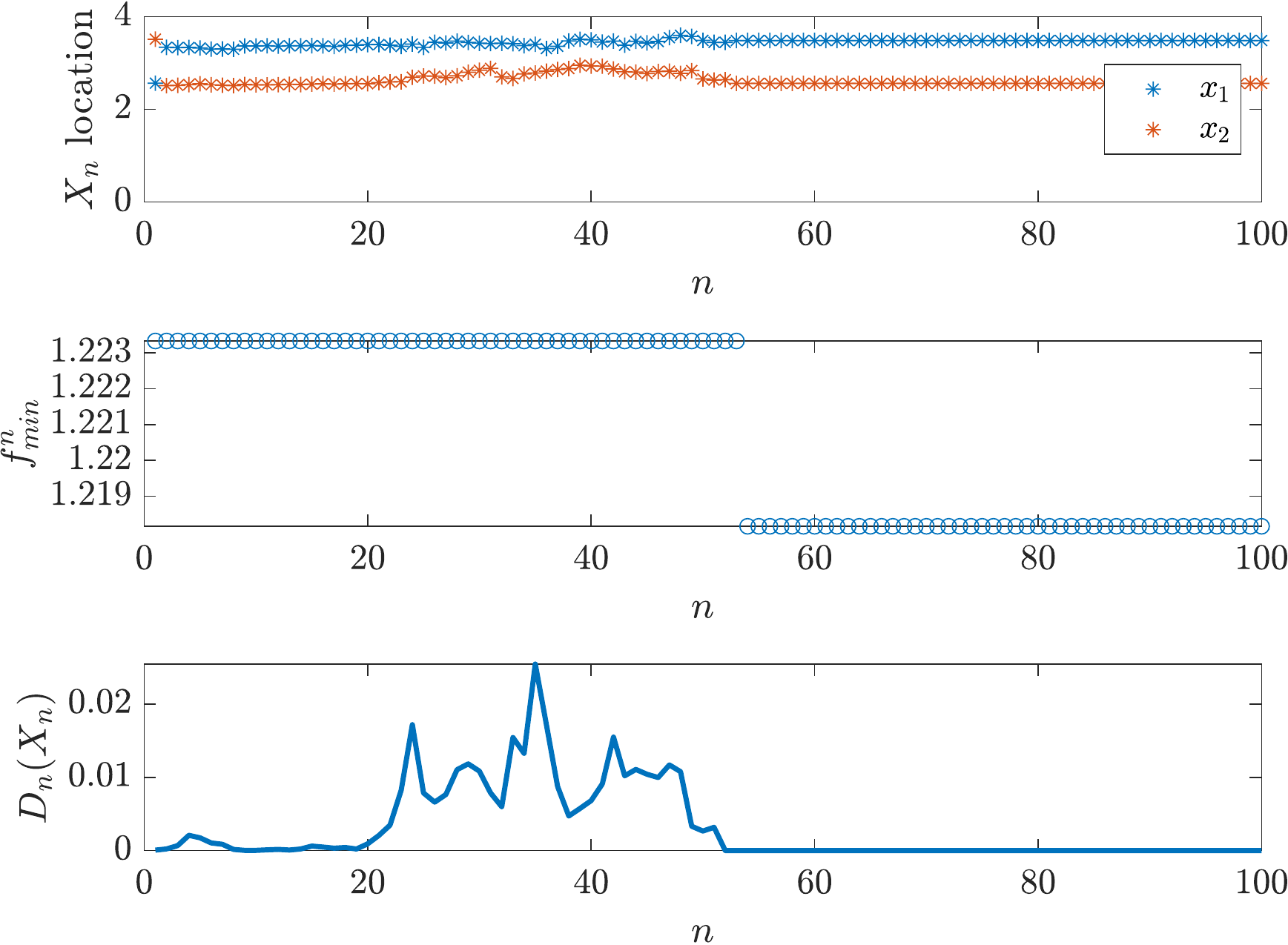}\label{fig:2D epsilon compare 2}}
\caption{(a): the history of the iterates $\{X_n\}$ from a single run when $\eps=0$ (top) and $\eps = cn^{-\alpha}$ (bottom) respectively, with $f_{\min}$ given. (b): the trajectory of the iterates $X_n$, the estimated minimum value $f_{\min}^n$ and the effective diffusion coefficient $D_n(X_n) = (\sigma_n)^2/2$ from one single run with $\eps= 0$ and $f_{\min}$ unknown. The global minimum is $[2,2]^\top$ in both cases.}
    \label{fig:2D epsilon compare}
\end{figure}


\subsection{Estimating optimal objective function value}
\label{SUBSEC:Unknown fmin}

Previously, and also in the main theoretical results, we have used the assumption that the value $f_{\min}:=f(\bx_*)$ (but not the location $\bx_*$) is known \emph{a priori}. This is often true in many applications (for instance, data matching) where $f_{\min}=0$. When $f_{\min}$ is unknown, developing a convergence theory for the algorithm is much more challenging. The difficulty is that the above analysis is in the continuum and the estimation on $f(\bx_*)$ is inherently discrete. However, with a little more effort in estimating $f_{\min}$ during the iteration, we can make our algorithm efficient under such a situation.

It is important to have the state-dependent term in the algorithm, which is the only component that encodes any information regarding the objective function $f(\bx)$. We may consider a different variant of~\eqref{EQ:Sigma} and~\eqref{EQ:Disc Sigma}:
\begin{equation}\label{eq:no-Jmin-diff}
    \sigma_n=\sqrt{2\Big[\Big(f(X_n)-f_{\min}^n\Big)^\beta+ \eps \Big]},\ \ \ \mbox{where}\ \ \ f_{\min}^n:=\min \left\{f(X_n), f_{\min}^{n-1} \right\}\,,\, \eps  = c n^{-\alpha}\,.
\end{equation}
The role of $f_{\min}^n$ here is to approximate $f(\bx_*)$ through the history minimum of the objective function values from the past iterates. We also need to have $\eps \neq 0$ not only to avoid $\{X_n\}$ stagnating at any history minimum rather than the global minimum but also to visit everywhere of the domain $\Omega$; see for a counterexample in~\Cref{fig:2D epsilon compare 2}.

\begin{figure}[!htb]
    \centering
    \includegraphics[width = 1.0\textwidth]{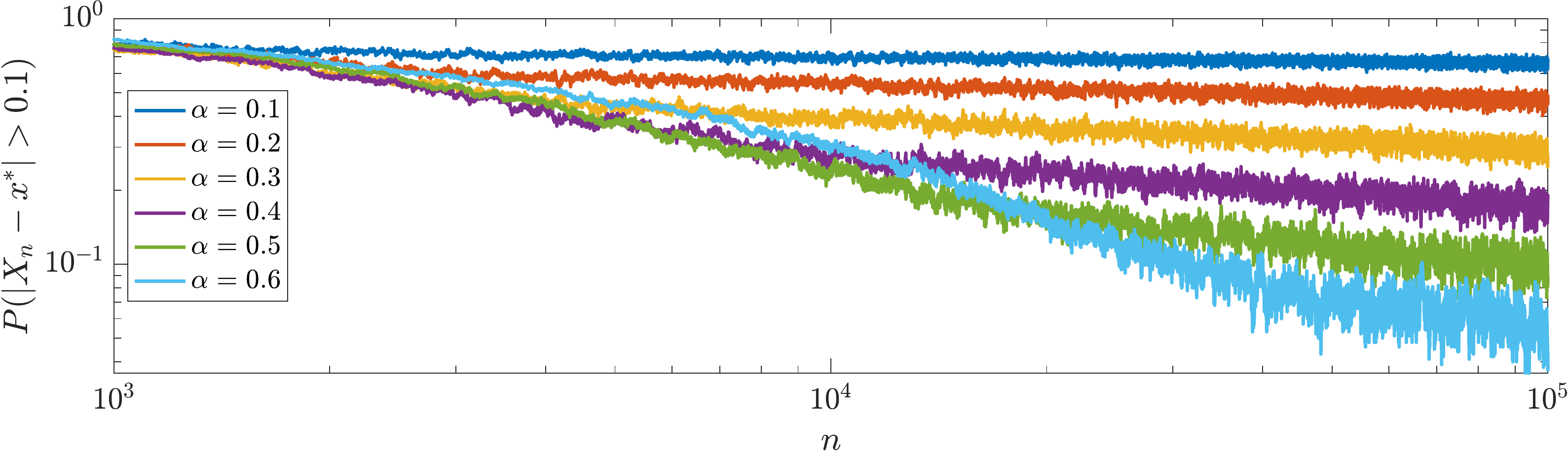}
    \caption{The convergence performance of~\eqref{EQ:Adaptive A} with the diffusion coefficient defined in~\eqref{eq:no-Jmin-diff} and $\eps = 10^{-3} n^{-\alpha}$. The minimum objective function value $f(\bx_*)$ is estimated by $f_{\min}^n$.}
    \label{fig:2D estimate Jmin}
\end{figure}

On the other hand, in the case of $f(\bx_*)$ unknown, when we set $\eps\neq 0$ but monotonically decaying as $n$ becomes large, we observe in~\Cref{fig:2D estimate Jmin} the decay of $\mathbb{P}(|X_n - \bx_*| > 0.1)$ as $n$ increases, but much slower than the cases shown in~\Cref{fig:2D alpha} in which we assume to know $f(\bx_*) = 0$ \emph{a priori}. In~\Cref{fig:2D Jmin comp 1}, we present the history of $f_{\min}^n$ and $X_n$ from a single run when $\eps = 10^{-3} n^{-0.6}$. In~\Cref{fig:2D Jmin comp 2}, we show a comparison regarding whether $f(\bx_*)$ is known \emph{a priori} or not where the convergence performances are estimated from $10^3$ i.i.d.~runs and $\eps = 10^{-3} n^{-0.6}$ in both cases.
\begin{figure}[!htb]
\centering
\subfloat[the history of the iterates $\{X_n\}$ and $\{f_{\min}^n\}$]{\includegraphics[width = 0.46\textwidth]{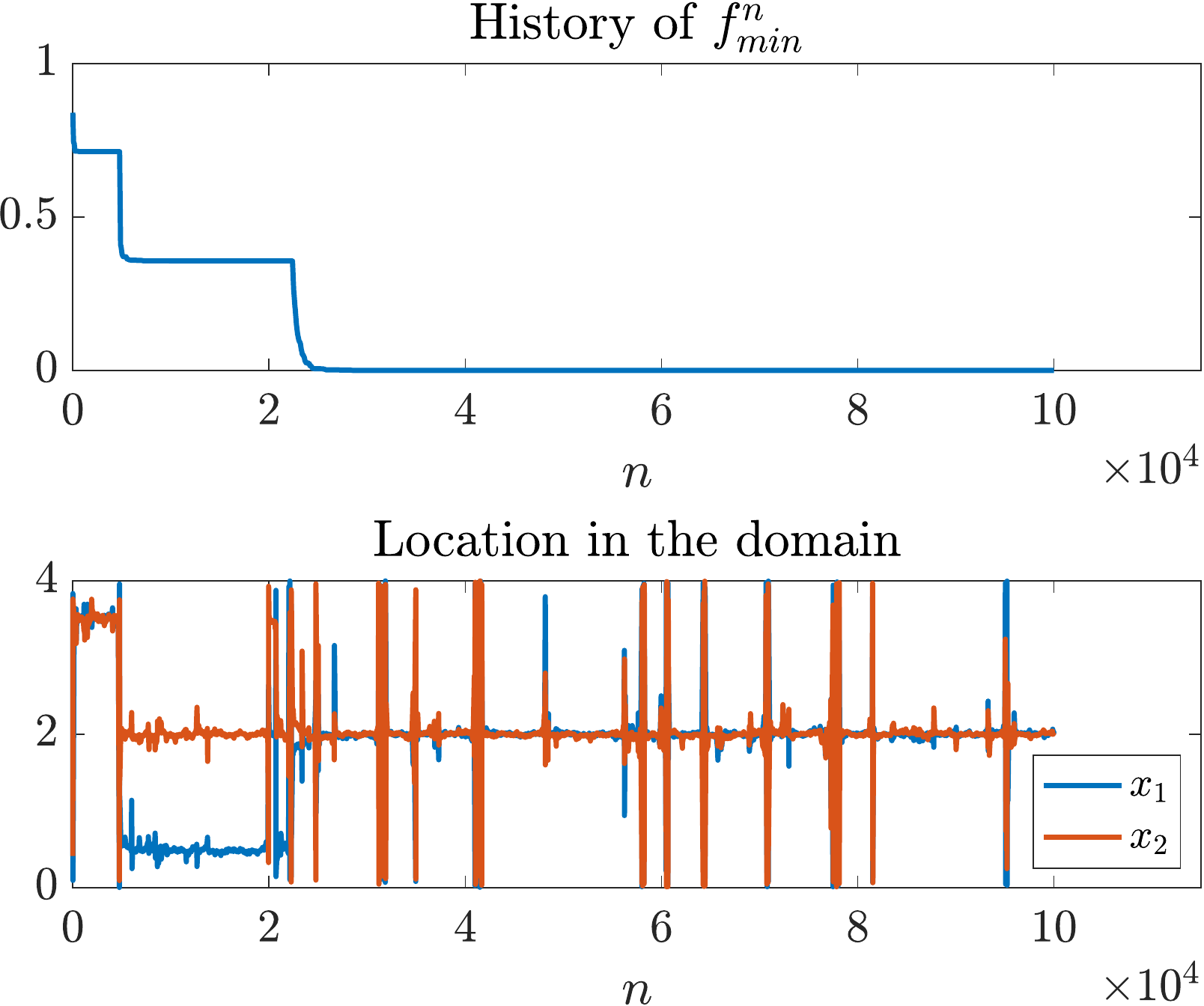}\label{fig:2D Jmin comp 1}}
\subfloat[comparison of convergence regarding $f_{\min}$]{\includegraphics[width = 0.46\textwidth]{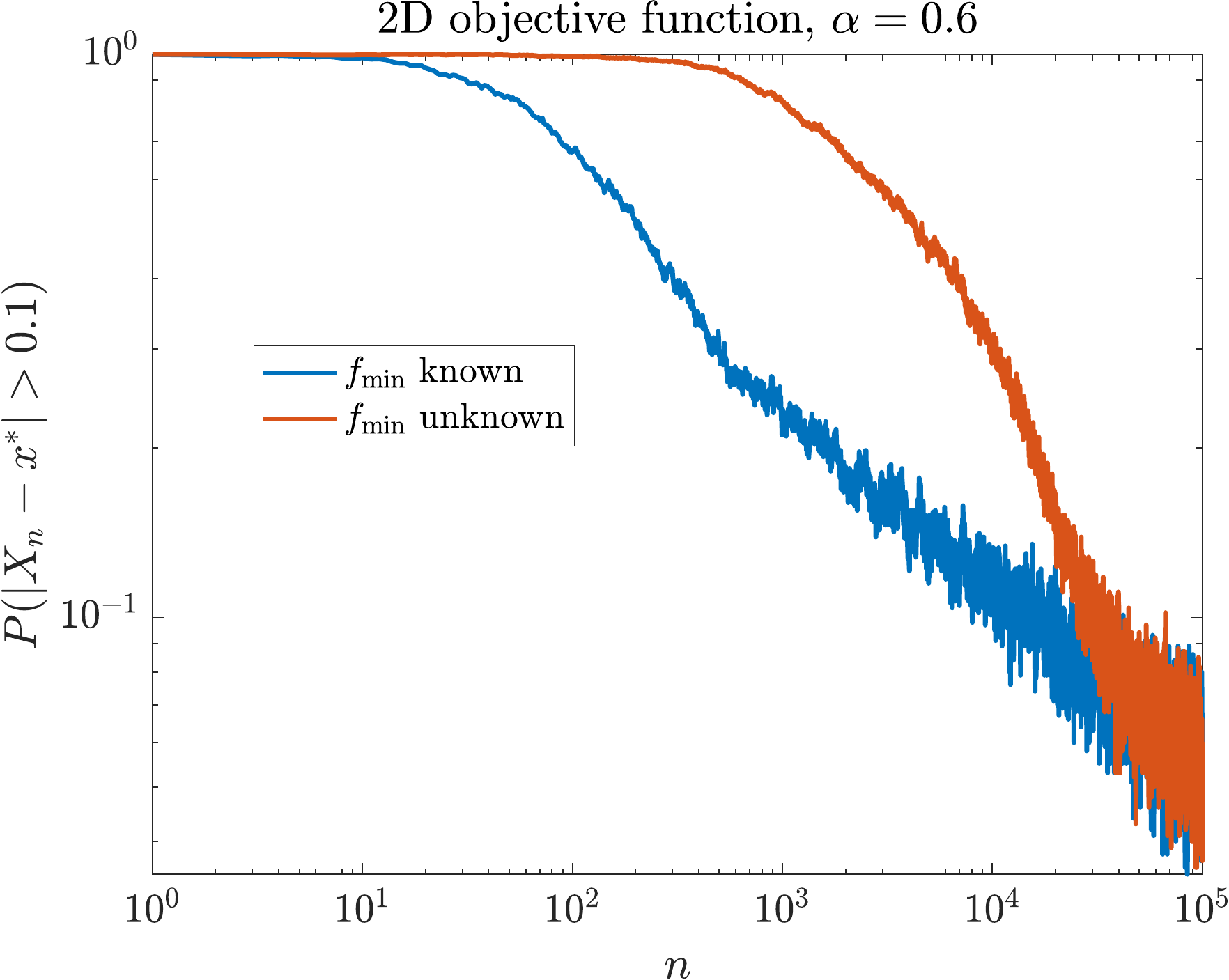}\label{fig:2D Jmin comp 2}}
\caption{(a): the history of the iterates $\{X_n\}$ and $\{f_{\min}^n\}$ from a single run when $\eps = 10^{-3} n^{-0.6}$ in~\eqref{eq:no-Jmin-diff}. (b): the comparison of convergence performances in terms of whether $f_{\min} = f(\bx_*)$ is known or needs to be estimated using $f_{\min}^n$ following~\eqref{eq:no-Jmin-diff}. In both cases, we set $\eps = 10^{-3} n^{-0.6}$.}
\label{fig:2D Jmin comp}
\end{figure}

The proposed algorithm~\eqref{eq:no-Jmin-diff} for estimating the optimal value of the objective function $f_{\min}$ is based on discrete $f(X_n)$ values and does not fit well into the continuum style convergence proof. With increasing values of $n$, it is possible to approximate $f_{\min} = f(\bx_*)$ with increasing accuracy. In the numerical example related to~\Cref{fig:2D Jmin comp 1}, the simple estimate~\eqref{eq:no-Jmin-diff}  of $f(\bx_*)$ was used. The figure shows the convergence to $\bx_*$ and the optimum estimate to $f(\bx_*)$.
There are abnormal cases where the estimate~\eqref{eq:no-Jmin-diff}  would require very slow decay of $\eps(t)$,
and for a rigorous convergence result, we adopt the same strategy, which we used in~\cite{EnReYa-arXiv22}, of
basing the hyperparameter estimates on extra sampling. If in the sequence in~\eqref{eq:no-Jmin-diff}  we add
uniformly sampled values $\{Y_n\}$ from the domain $\Omega$, we can guarantee almost-sure convergence of  $f(Y_n)$ to the optimal value $f({\bf x_*})$; see~\Cref{prop:visit_ae} below.

\begin{proposition}\label{prop:visit_ae}
Assume that there is a subset $\Omega_{sc} \subseteq \Omega$ on which the objective function $f(x)$ is strongly convex and ${\bx_*} \in \Omega_{sc}$. Define the monotone-decreasing sequence 
$$f_{\min}^n:=\min \left\{f(X_n), f(Y_n), f_{\min}^{n-1} \right\},\quad f_{\min}^0 = \min \left\{f(X_0), f(Y_0)\right\},$$
where $\{X_n\}$ are iterates from~\eqref{eq:discrete_update} with $\sigma_n = \sqrt{2(f(X_n)-f_{\min}^n)^\beta+ 2\eps }$ and $\{Y_n\}$ are uniform samples drawn from the domain $\Omega$. Then we have $f_{\min}^n\xrightarrow{n\rightarrow \infty} f({\bf x_*})$ almost surely.
\end{proposition}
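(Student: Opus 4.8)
The plan is to pin down the limit of $\{f_{\min}^n\}$ by monotonicity and then force it down to $f(\bx_*)$ using a second Borel--Cantelli argument driven \emph{entirely} by the uniform samples $\{Y_n\}$; the iterates $\{X_n\}$ and the regularization $\eps$ play no role beyond possibly making $f_{\min}^n$ smaller, which never hurts the bound. First I would record the two elementary facts that fix the limit. By construction $\{f_{\min}^n\}$ is non-increasing, and since $f(\bx_*)$ is the global minimum of $f$ we have $f_{\min}^n\ge f(\bx_*)$ for all $n$. Hence for every realization the sequence converges to some limit $L\ge f(\bx_*)$, and it remains only to prove $L\le f(\bx_*)$ almost surely.

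Next I would fix a tolerance $\theta>0$ and show $L\le f(\bx_*)+\theta$ almost surely. Since $f\in\cC^2$ by~\ref{itm:A1} (strong convexity on $\Omega_{sc}$ likewise gives continuity at $\bx_*$), there is an $r_\theta>0$ with $\cB_{r_\theta}(\bx_*)\subseteq U_\theta:=\{\bx\in\Omega: f(\bx)<f(\bx_*)+\theta\}$, so $U_\theta$ has positive Lebesgue measure and a single uniform draw lands in it with probability $p_\theta:=|U_\theta|/V_\Omega>0$. Because the $\{Y_n\}$ are i.i.d., the events $\{Y_n\in U_\theta\}$ are independent with $\sum_n \prob{Y_n\in U_\theta}=\infty$, so the second Borel--Cantelli lemma gives $Y_n\in U_\theta$ infinitely often, almost surely. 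On that full-probability event infinitely many $n$ satisfy $f_{\min}^n\le f(Y_n)<f(\bx_*)+\theta$, and monotonicity upgrades this to $L\le f(\bx_*)+\theta$.

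Finally I would remove $\theta$ by applying the previous paragraph along the countable sequence $\theta_k=1/k$ and intersecting the countably many full-probability events: on the intersection $L\le f(\bx_*)+1/k$ for every $k$, hence $L\le f(\bx_*)$, and combined with $L\ge f(\bx_*)$ this yields $f_{\min}^n\to f(\bx_*)$ almost surely. I do not anticipate a genuine obstacle; the only points needing care are that $U_\theta$ has positive measure — this is exactly where continuity of $f$ at the (interior, under the periodic identification) minimizer enters — and that the $\{Y_n\}$ are mutually independent, which is what licenses the \emph{second} Borel--Cantelli lemma rather than only the first. Were the $\{Y_n\}$ merely adapted uniform draws instead of i.i.d., one would instead invoke the conditional (L\'evy) form of Borel--Cantelli, but the stated hypothesis makes that unnecessary.
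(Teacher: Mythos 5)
Your proof is correct and follows essentially the same route as the paper's: both discard the iterates $X_n$ entirely, use the i.i.d.\ uniform samples $Y_n$ to show that the probability of forever missing a positive-measure sublevel set $\{f < f(\bx_*)+\theta\}$ is a product tending to zero (your explicit second Borel--Cantelli invocation versus the paper's direct computation $\prod_n \Prob(Y_n \notin \Omega_\delta) = 0$ are the same fact), and then intersect over a countable sequence of levels. The only cosmetic difference is that the paper uses the strong-convexity hypothesis to get a quantitative bound $|\Omega_\delta| \gtrsim \delta^{d/2}$ on the sublevel set's measure, whereas you only need positive measure from continuity, which suffices for the qualitative almost-sure statement.
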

\begin{proof}
Let $\delta$ be the largest positive constant such that $\Omega_\delta := \{x : f(x) - f({\bx_*})  \leq \delta \} \subseteq \Omega_{sc}$. Note that $\Omega_{\delta}$ is nested between two ellipsoids centered at $\bx_*$ and the ratio $|\Omega_{\delta}|/|\Omega| \leq C \delta^{d/2}$ for some positive constant $C$~\cite[Eqn.~(3.22)]{EnReYa-arXiv22}.  Also, it is easy to see that
\[
f_{\min}^{n} \leq \min \left\{f(Y_1),f(Y_2), \ldots, f(Y_n) \right\} := M^{(n)}.
\]
Therefore, we have
\begin{eqnarray}
\Prob\left( \lim\limits_{n\rightarrow \infty} f_{\min}^{n}  - f({\bf x_*}) > \delta  \right)   \leq \Prob\left( \lim\limits_{n\rightarrow \infty}M^{(n)}  - f({\bf x_*}) > \delta  \right) \nonumber 
&=&  \Prob\left( \cap_{n=0}^\infty \{ Y_n \not\in \Omega_\delta  \} \right)   \nonumber \\
&=& \prod_{n=0}^\infty \Prob (Y_n\not \in \Omega_{\delta}) \leq  \lim_{n\rightarrow \infty} \left(1 - C\delta^{d/2}   \right)^n = 0.  \label{eq:as_1}
\end{eqnarray}
Since~\eqref{eq:as_1} holds for any $0 < \delta' \leq \delta$, we conclude that $f_{\min}^{n} $ converges to $f({\bf x_*})$ almost surely.
\end{proof}

\subsection{Regularization-free algorithm}
\label{SUBSEC:eps=0}

In this section, we discuss the case when $\eps = 0$ in~\eqref{EQ:Sigma}. 
Based on the definition in~\eqref{EQ:Sigma}, $\sigma(f)$ is not integrable for $\beta \geq d/2$. This means that, mathematically, the process $\{X_t\}_{t\ge 0}$ can get arbitrarily close to the global minimizer but will never reach it unless $X_0 = {\bf x_*}$. However, from a practical point of view, being arbitrarily close is sufficient. 

Numerically, we still observe a rapid convergence of the discrete algorithm~\eqref{EQ:Adaptive A} to the global minimizer in this case, even though this ``convergence'' might not be in a strict mathematical sense since we have finite spatial resolution when computing the distributions. To be more precise, we set
\[
    \sigma_n(X_n) = \sqrt{2\Big[(f(X_n)-f_{\min}^*)^+\Big]^\beta}\,,
\]
which is~\eqref{EQ:Sigma}. In~\Cref{fig:2D eps=0}, we plot the convergence histories for $d=2$ and $\beta$ ranging from $0.5$ to $2$, and we assume $f_{\min}^* = f_{\min} = 0$ is known. The probabilities in the $y$-axis are estimated using $10^3$ i.i.d.~runs while the $x$ axis is the number of iterations. The initial guess is uniformly sampled from the domain $\Omega$. It is worth noting that when $\beta < 1 = \frac{d}{2}$, there is no guarantee for convergence in probability since $\dlim_{\eps \rightarrow 0} Z_{\bar u} <\infty$ as defined in~\eqref{EQ:Stationary Dist}.

Next, we consider another optimization problem with four different global minima, whose optimization landscape is seen in~\Cref{fig:2D eps=0 dw 1}. We denote the global minimizers by $x_1^* = [2,2]^\top$,  $x_2^* = [-2,-2]^\top$,  $x_3^* = [2,-2]^\top$ and  $x_4^* = [-2,2]^\top$. We implement the same algorithm with $\eps =0$ and $\beta = 4$, assuming again $f_{\min} = 0$ is known. The convergence behavior is shown in~\Cref{fig:2D eps=0 dw 2}. We can see that there are equal probabilities of roughly $25\%$ for the iterate $X_n$ to be in a close neighborhood of any of the four global minima for $n$ large enough.

\begin{figure}[!htb]
    \centering
    \includegraphics[width = 0.9\textwidth]{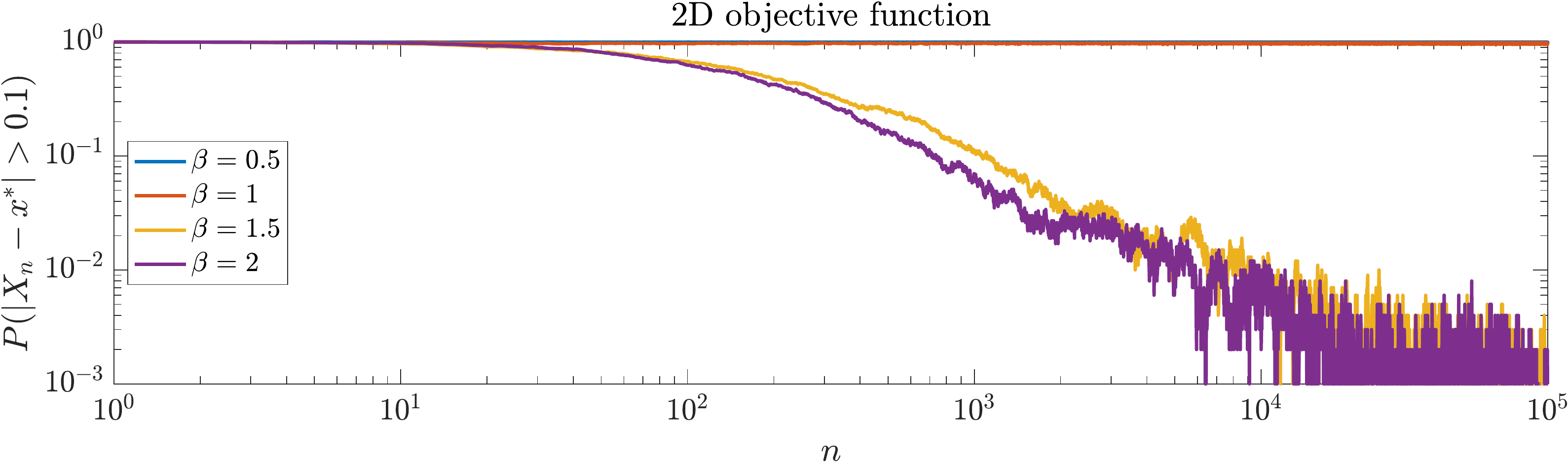}
    \caption{Convergence history for minimizing~\eqref{eq:J} with $\eps = 0$ and $d=2$ after $10^5$ number of iterations.}
    \label{fig:2D eps=0}
\end{figure}

\begin{figure}[!htb]
    \centering
    \subfloat[optimization landscape with 
    $4$ global minimizers]{\includegraphics[width = 0.49\textwidth]{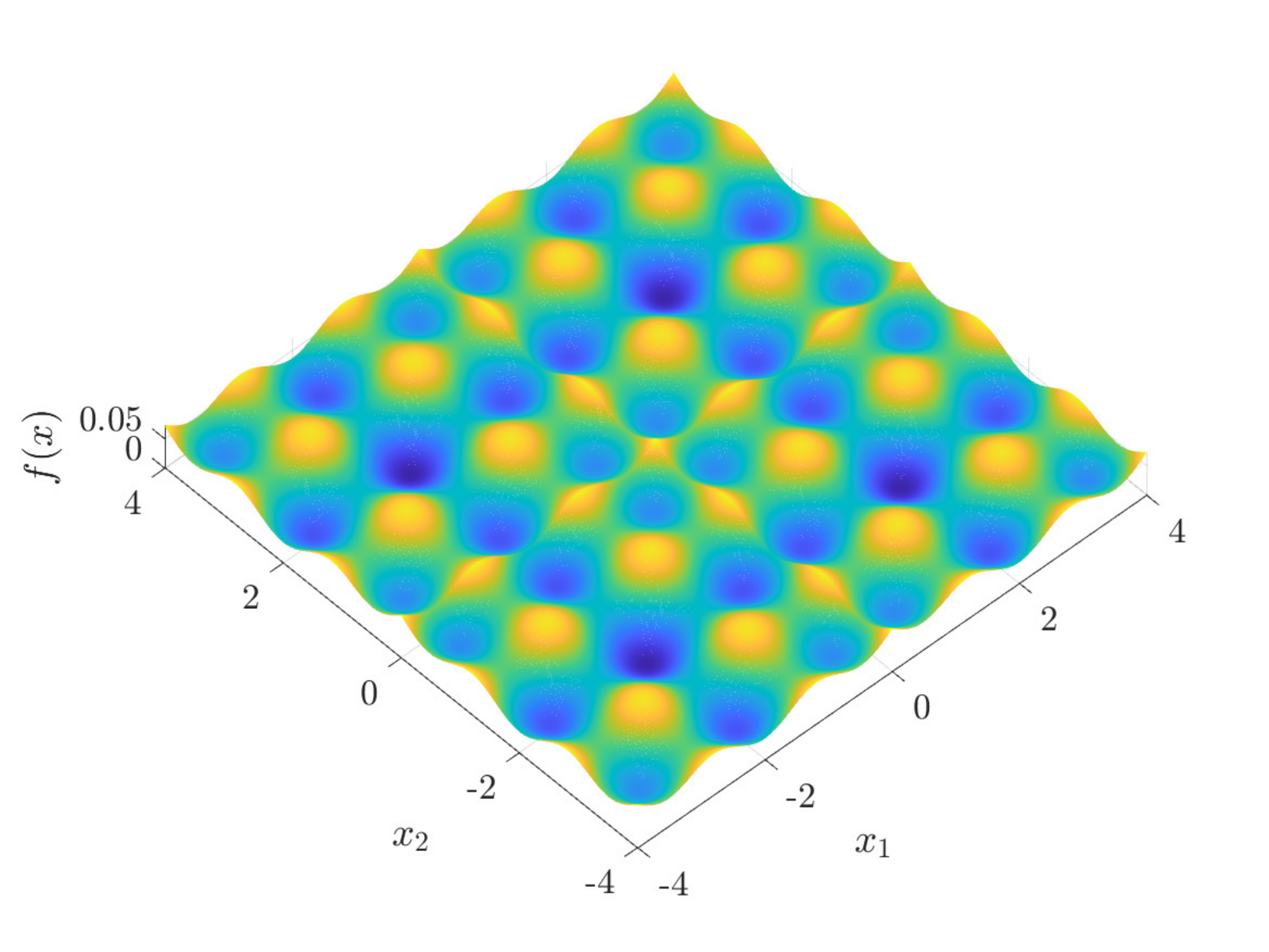}\label{fig:2D eps=0 dw 1}}
\subfloat[convergence performance with $\eps = 0$]{\includegraphics[width = 0.49\textwidth]{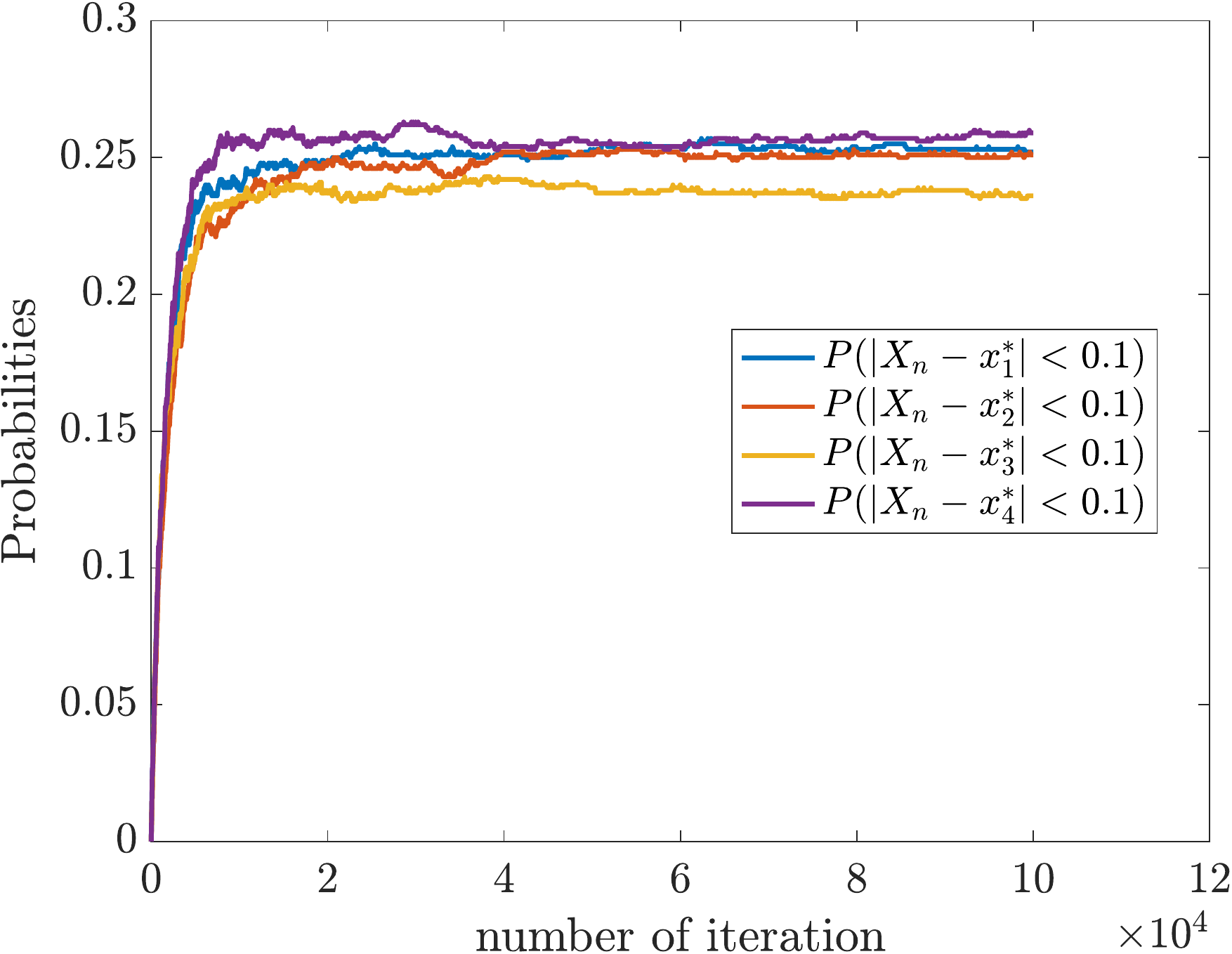}\label{fig:2D eps=0 dw 2}}
\caption{(a): an objective function landscape with four global minimizers labeled as $x_1^* = [2,2]^\top$,  $x_2^* = [-2,-2]^\top$,  $x_3^* = [2,-2]^\top$ and  $x_4^* = [-2,2]^\top$; (b): convergence performance (from $10^3$ i.i.d.~runs) of the proposed algorithm with $\eps = 0$, $\beta = 4$, and $d=2$ after $10^5$ number of iterations.}
\label{fig:2D eps=0 dw}
\end{figure}

The numerical experiment in~\cref{fig:2D alpha} shows that eliminating the regularization term $\eps > 0$ in the algorithm~\eqref{EQ:Disc Sigma} gives a faster convergence rate than that with the regularization term, at least for that particular objective function $f$. This requires that $f_{\min} = f(\bx_*)$ is known. With an unknown optimal objective function value, there is a clear risk of having the algorithm trapped in local minima; see~\Cref{fig:2D epsilon compare 2}. The regularization-free method, i.e., $\eps=0$, works very well when the optimization landscape is convex or when $f(\bx_*)$ is known.

{\bf Lack of theoretical understanding for the case of $\eps=0$.} 
We currently have a minimal theoretical understanding of the $\eps=0$ algorithm due to the strong degeneracy of the diffusion coefficient $D$ in this case. The proof from~\Cref{SEC:Theory} does not apply here because of the lack of appropriate Poincar\'e inequality in the strongly degenerate case, i.e., $\beta \geq d/2$. What we observe in the simulations might be an effect of discretization in the computational algorithm. 

The degenerate elliptic operator in~\eqref{eq:generator} is a challenge discussed extensively in the PDE and SDE literature; see, for example~\cite{ChSt-SIAM10,EpMa-SIAM10,FrGoGoRo-EJDE12,FrMu-Book2016,heinonen2018nonlinear} and references therein. The classical way of handling degeneracy is to regularize the problem with a parameter $\eps$ and then take $\eps\to 0$~\cite{Oleinik-MS66,StVa-Book97}. This allows one to establish the existence, and sometimes uniqueness, of the solution in a finite time interval $(0, T]$ but does not generalize to the limiting case of $T\to \infty$. There are recent results based on weighted estimates for the problem in the absence of the regularization parameter $\eps$, mainly for the case of weak degeneracy, that is, when the exponent $\beta$ is sufficiently small (see, for instance, reference~\cite{FrGoGoRo-EJDE12} for a more precise definition of weak and strong degeneracy)~\cite{FrGoGoRo-EJDE12,FrMu-Book2016,heinonen2018nonlinear}. In most cases, the existence of solutions to the Fokker--Planck equation~\eqref{EQ:Fokker--Planck} can only be established in the one-dimensional case (again in specific weighted function spaces) for a finite time interval $(0, T)$. 

{\bf Existing results in simplified settings.} There are indeed some precise characterizations of the singular behavior of such degenerate problems in simplified (yet still difficult) scenarios where the particular forms of diffusion coefficients (such as $D=x(1-x)$ on $(0, 1)$) are assumed, for instance, in the case where the point of degeneracy (that is, the global minimizer in our case) is on the boundary of the domain and appropriate boundary conditions are prescribed at the point of degeneracy; see for instance~\cite{ChSt-SIAM10,EpMa-SIAM10} for the detailed analysis of the Wright--Fisher equation. To demonstrate how the specific structure of the problem plays a role in the theory, let us consider the one-dimensional case of $f(x)=x^2$ and $\beta=1$. We further simplify the problem by taking $\Omega=\bbR$. With all these simplifications, we have that $D=x^2$, and the Fokker--Planck equation~\eqref{EQ:Fokker--Planck} simplifies to
\begin{equation}
    u_t =(x^2 u)_{xx},\ \ -\infty<x<+\infty\,.
\end{equation}
If we introduce the new variable $v=x^2 u$, we can check that $v$ solves
\begin{equation*}
    v_t =x^2 v_{xx},\ \ -\infty<x<+\infty\,.
\end{equation*}
Due to the degeneracy at $x=0$, we have that $v(0, t)=0$. Therefore, we can focus only on the positive axis. The equation for $v$ can be written as
\begin{equation}
    v_t =x^2 v_{xx},\ \ 0<x<+\infty, \ \ \ v(0, t)=0\,.
\end{equation}
Let us perform the change of variable $x=e^y$, that is, $y=\log x$. Then it is easy to check that the interval $(0,+\infty)$ is mapped to $(-\infty, +\infty)$. The Fokker--Planck equation is now mapped into the following constant-coefficient form:
\begin{equation}
    \wt v_t=\wt v_{yy} - \wt v_y,\ \ \ \wt v(-\infty, t)=0\,.
\end{equation}
With the boundary conditions, this system has a non-localized stationary distribution. This leads to the fact that $\int_{y_L}^{y_R} \wt v dy\nrightarrow 0$ as $t\to\infty$ for any finite interval $(y_L, y_R)$. Using the fact that $v=x^2 u$, we conclude that $\int_{x_L}^{x_R} u dx \le c\,x_R^{-2} \to 0$ as $x_R\to +\infty$ for some $c$ where $x_L=e^{y_L}$ and $x_R=e^{y_R}$. This simple argument shows that for any $x_L>0$, $\int_{x_L}^\infty u dx =0$. Therefore the mass of $u$ concentrate in the region $(0, x_L)$. This heuristic argument can be made more rigorous to show the concentration of the stationary distribution and can be generalized to the two-dimensional case with the radial function $f(x)=|x|^2$. Going beyond such specific forms seems extremely difficult.

\subsection{Adding Gradient Information}

One important goal of this paper is to prove that global convergence is possible with an algebraic rate without even approximating the gradient in the algorithm. Another goal is to develop an efficient derivative-free algorithm. Derivative-free methods typically compare different objective function values to find the direction for the next step or to accept a step or not. This is so for
deterministic techniques, for example, the simplex method~\cite{LaReWrWr-SIAM98} and also for stochastic
algorithms, for example, simulated annealing~\cite{kirkpatrick1983optimization}, and consensus-based optimization methods~\cite{carrillo2021consensus,totzeck2021trends}.

\begin{figure}[!htb]
    \centering
    \includegraphics[width = 1.0\textwidth]{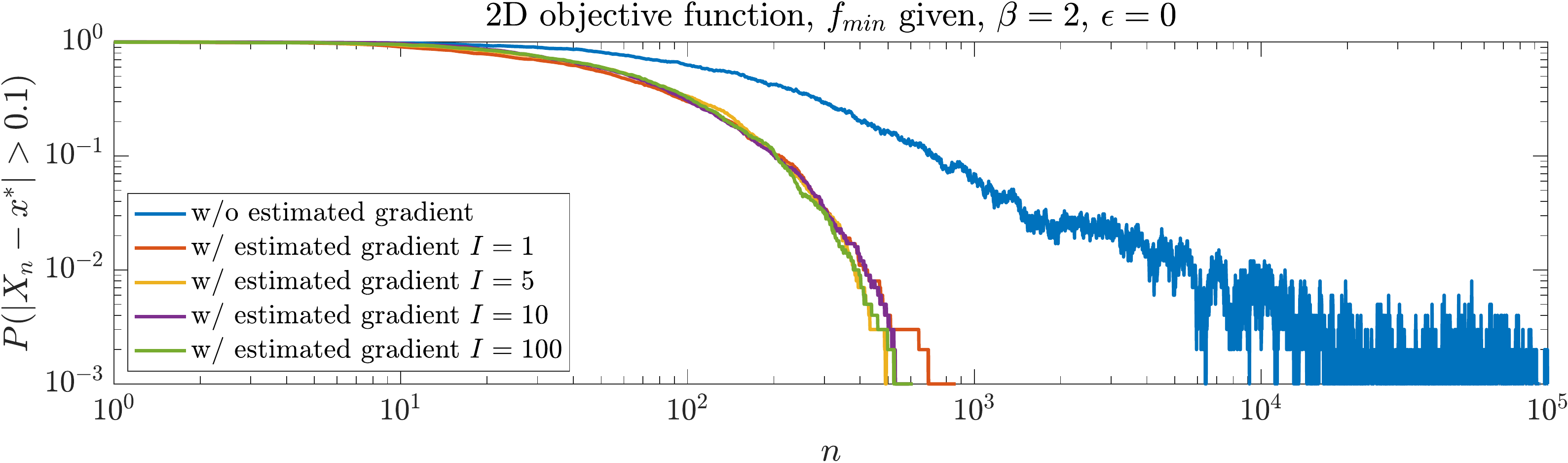}
    \caption{Log-log plots of convergence performance between~\eqref{EQ:Adaptive A} and~\eqref{eq:discrete_SGD}. For both cases, we set $D_n(X_n) = f(X_n)^2$ and $f_{\min}= 0$ is known \emph{a priori}. We also consider different window size $L$ for estimating the gradient in~\eqref{eq:estimate grad}. The statistics are estimated by $10^3$ i.i.d.~runs.}
    \label{fig:2D gradient comp}
\end{figure}

\begin{figure}[!htb]
    \centering
    \includegraphics[width = 1.0\textwidth]{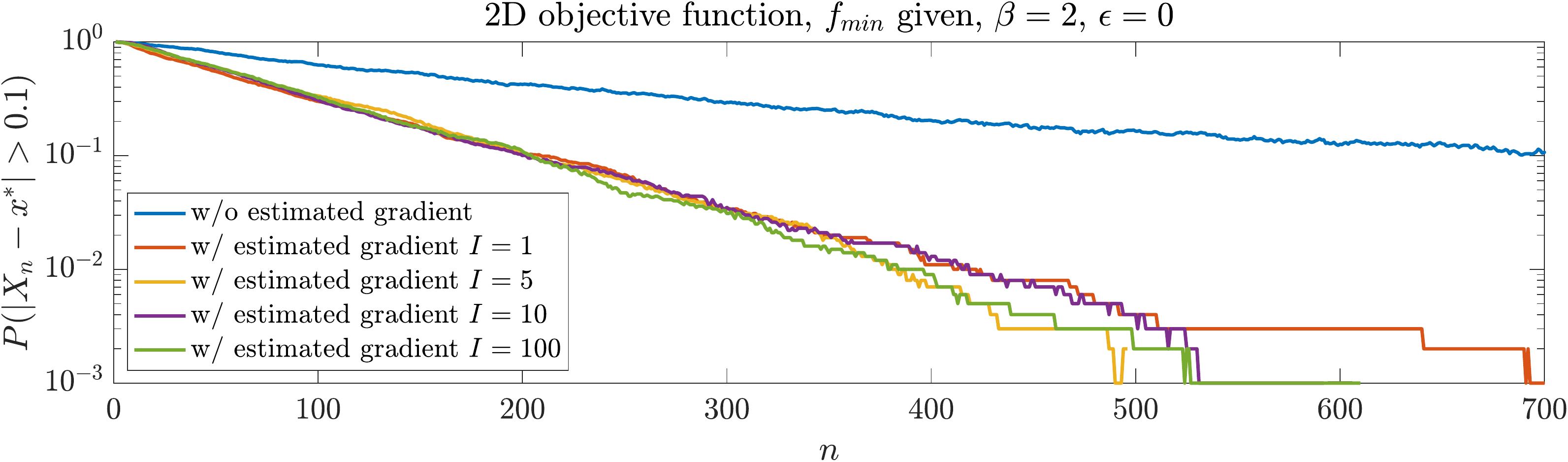}
    \caption{Semilog plots of the same convergence statistics in~\Cref{fig:2D gradient comp} but for $n\leq 700$.}
    \label{fig:2D gradient comp 2}
\end{figure}

Even if the gradient information is not necessary for convergence, adding such information from
objective function values of several steps is also possible here. Without extra computational
cost, the practical performance can be improved. We propose the following simple algorithm. First, we can accelerate the convergence with an approximated gradient based on the secant method as follows
\begin{equation} \label{eq:estimate grad}
     \overline{G}(X_n) = \sum_{i=1}^{I} w_i \frac{f(X_{n-i+1} ) - f(X_{n-i}) }{|X_{n-i+1} - X_{n-i}|^2}\, \left(X_{n-i+1} - X_{n-i}\right),
\end{equation}
where $\sum_{i=1}^I w_i = 1$, and $w_i \geq 0$. For example, we can set the weight $w_i \sim \gamma^i$ for some $0<\gamma < 1$. Using $\overline{G}(X_n)$ in place of the gradient term in a standard stochastic gradient descent scheme, we derive a modified algorithm compared to~\eqref{EQ:Adaptive A}:
\begin{equation} \label{eq:discrete_SGD}
    X_{n+1} = X_n  - \eta_g\, \overline{G}(X_n) + \eta\,\sigma(f(X_n)) \boldsymbol{\zeta}_n,
\end{equation}
where $\eta_g$ is the step size for the gradient term and other symbols follow earlier notations in~\eqref{EQ:Adaptive A}.

We performed simulations using this algorithm with an estimated gradient. In~\Cref{fig:2D gradient comp}, we present result for the case when $f_{\min} = f(\bx_*)$ is known \emph{a priori} and $\sigma(X_n)=\sqrt{2(f(X_n)-f_{\min})^2}$ (that is, the case of $\beta = 2$ and $\eps=0$). We use the weights $w_i \sim \gamma^i$ where $\gamma = 0.5$ and various $I$ values as used in~\eqref{eq:estimate grad}. We compare the convergence performance of descent algorithms based on~\eqref{EQ:Adaptive A} and~\eqref{eq:discrete_SGD}. The statistics are estimated from $10^3$ i.i.d.~runs. It is evident from the log-log plots in~\Cref{fig:2D gradient comp} that the approximated gradient information significantly accelerates the convergence of the stochastic descent algorithm when $n$ is large. The semilog plots in~\Cref{fig:2D gradient comp 2} illustrate the \textit{exponential} convergence when approximated gradients are used in the descent algorithm.

Next, we show an example of full-waveform inversion (FWI). FWI is a nonlinear inverse technique that utilizes the entire wavefield information to estimate the medium properties of the propagating domain. Without loss of generality, the PDE constraint of FWI is the following acoustic wave equation with zero initial condition and non-reflecting boundary conditions.
\begin{equation}\label{eq:FWD}
     \left\{
     \begin{array}{rl}
     & m(\mathbf{x})\frac{\partial^2 u(\mathbf{x},t)}{\partial t^2}- \Laplace u(\mathbf{x},t) = s(\mathbf{x},t),\\
    & u(\mathbf{x}, 0 ) = 0,                \\
    & \frac{\partial u}{\partial t}(\mathbf{x}, 0 ) = 0 . 
     \end{array} \right.
\end{equation}
We set the model parameter $m(\mathbf{x}) = 1/c(\mathbf{x})^2$, where $c(\mathbf{x})$ is the wave velocity, $u(\mathbf{x},t)$ is the forward wavefield, $s(\mathbf{x},t)$ is the wave source. The velocity parameter $m$ is often the target of reconstruction. Equation~\eqref{eq:FWD} is a linear PDE but defines a nonlinear operator $\mathcal F$ that maps $m(\mathbf{x})$ to $u(\mathbf{x},t)$. In FWI, we translate the inverse problem of finding the model parameter $m$ based on the observable seismic data $\{g^{obs}_i\}$ to a constrained optimization problem:
\begin{equation}\label{eq:FWI}
    m^{\ast} = \argmin_{m} f(m),\quad f(m) = \frac{1}{2} \sum_{i=1}^{n_s} \int_\Gamma \int_0^T \| g_i(x,t;m) - g^{obs}_i(x,t)\|^2 dt\,dx\,,
\end{equation}
where $n_s$ is the number of wave sources. For each given source  $s_i(\mathbf{x},t)$ where $1\leq i\leq n_s$,  $g_i(x,t;m) = R\mathcal F(m)$ is the synthetic data  with $R$ being the linear projection operator that extracts the wavefield $u_i$ at the measurement domain $\Gamma$.

We comment that~\eqref{eq:FWI} is a highly-nonconvex optimization problem. We will apply our AdaVar algorithm with an additional approximated gradient component~\eqref{eq:discrete_SGD} to find the global minimizer. First, we parameterize the velocity $c(\mathbf{x})$ to be piecewise-constant and we wish to invert ten unknowns $\{v_i\}_{i=1}^{10}$; see~\Cref{fig:FWI-vel} for an illustration. That is, we search for $X = [v_1,\ldots,v_{10}] \in [1.5,5.5]^{10}\subset \mathbb{R}^{10}$. Thus, the objective function can be denoted as $f(m) = f(v_1,\ldots,v_{10}) = f(X)$. In executing the algorithm~\eqref{eq:discrete_SGD}, we set $\sigma(f(X_n)) = |f(X_n)|^3$, i.e., $\beta = 6 > d/2 = 5$, $\eta = 0.125$, $\eta_g = 0.05$, $\gamma = 0.5$ and $L = 2$, as the hyper-parameters. We consider $f_{\min} = 0$ since this is a data-fitting problem. Since the last layer right above the bottom boundary cannot be accurately recovered due to the non-reflective boundary condition, we assume its velocity is known to be $5$ km/s. We place $8$ sources and $60$ receivers equally distributed on the top boundary. The source consists of two Ricker wavelets of disjoint supports at $15$ Hz peak frequency. The ground truth is $X^* = [4.81,4.77,4.75, 4.83,4.94,5.35,4.67,4.83,5.05,5.18]$.

In~\Cref{fig:FWI-obj,fig:FWI-iter}, we plot the convergence histories of the objective function values and the iterates. In the first $500$ iterations, the update is dominated by noise as the objective function value, and the errors in the iterates fluctuate randomly. Later, the approximated gradient becomes the leading driving force since the objective function decays almost monotonically. The top-layer coefficients, $v_1, \ldots, v_5$ converge to the ground truth as measured in the $\ell^2$ error; see~\Cref{fig:FWI-iter}. The bottom-layer coefficients converge much slower as the objective function is not very sensitive to their changes, and the estimated gradient biases towards sensitive coefficients. We also plot the results using the gradient descent algorithm starting from a homogeneous velocity of $2$ km/s. The iterates get stuck at a local minimum in fewer than $20$ iterations, as we can see from~\Cref{fig:FWI-obj,fig:FWI-iter}.

\begin{figure}[!htb]
\centering
\subfloat[velocity parameterization]{\includegraphics[width = 0.33\textwidth]{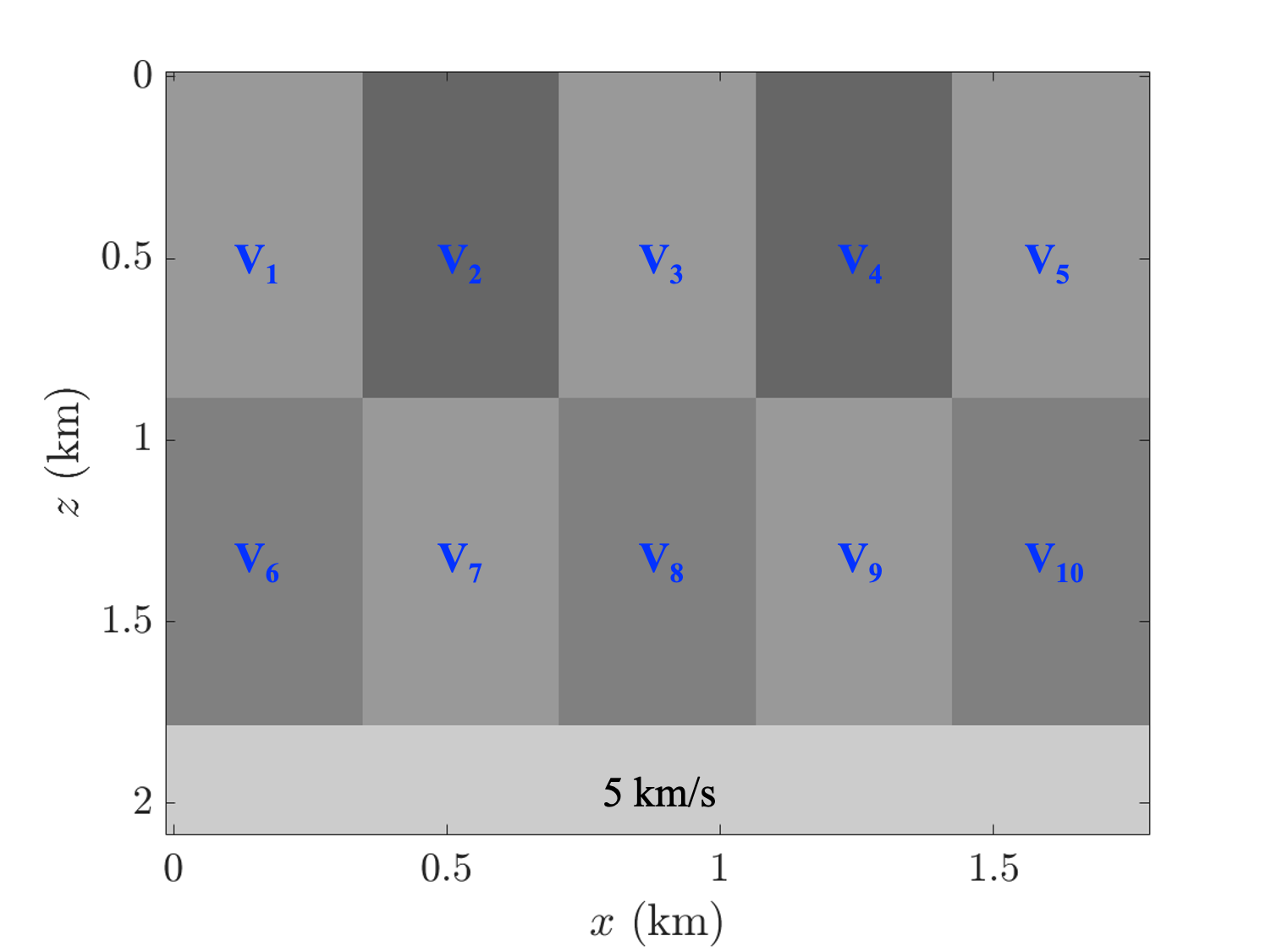}\label{fig:FWI-vel}}
\subfloat[objective function value]{\includegraphics[width = 0.33\textwidth]{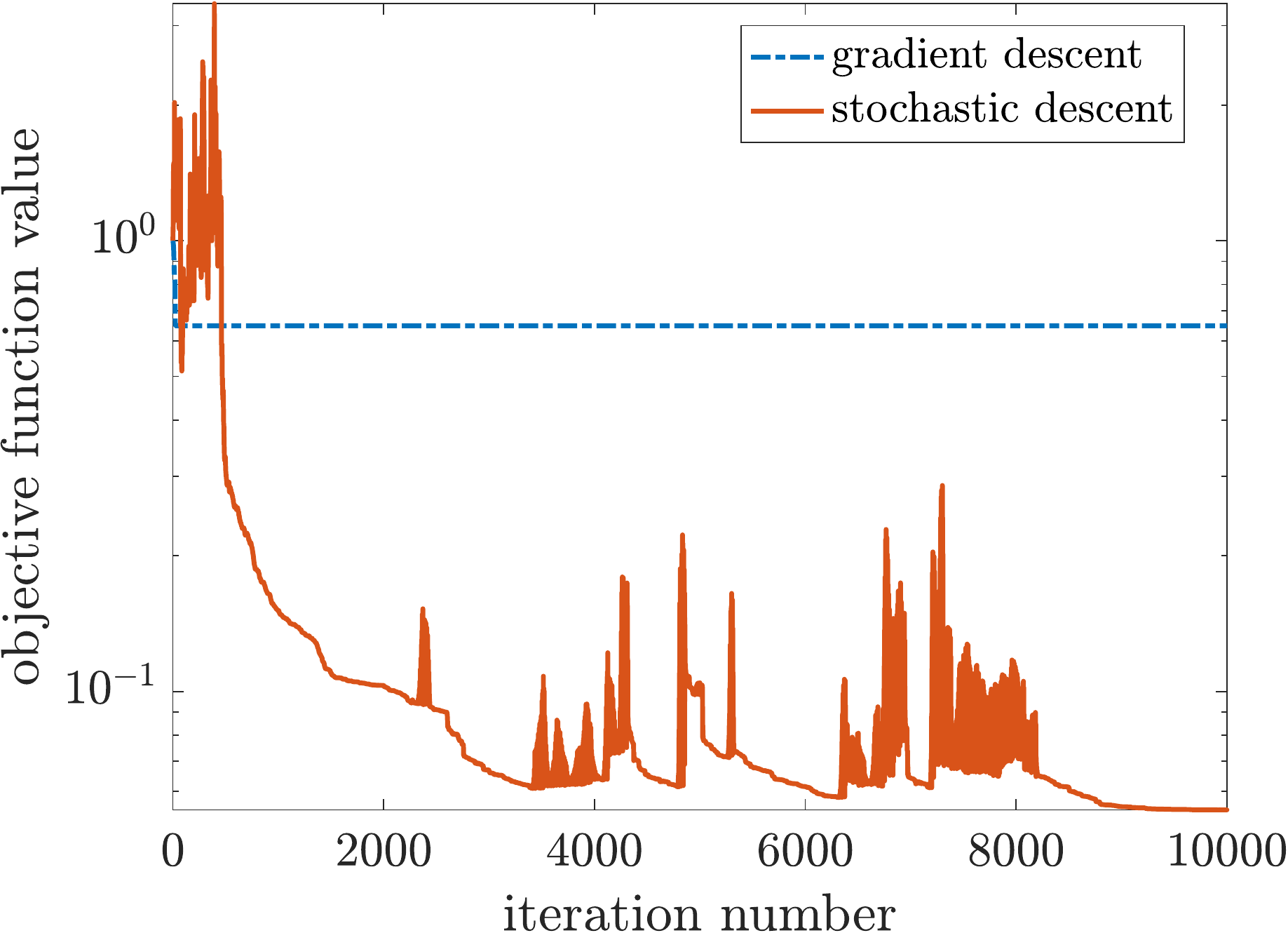}\label{fig:FWI-obj}}
\subfloat[$\ell^2$ error in iterates]{\includegraphics[width = 0.33\textwidth]{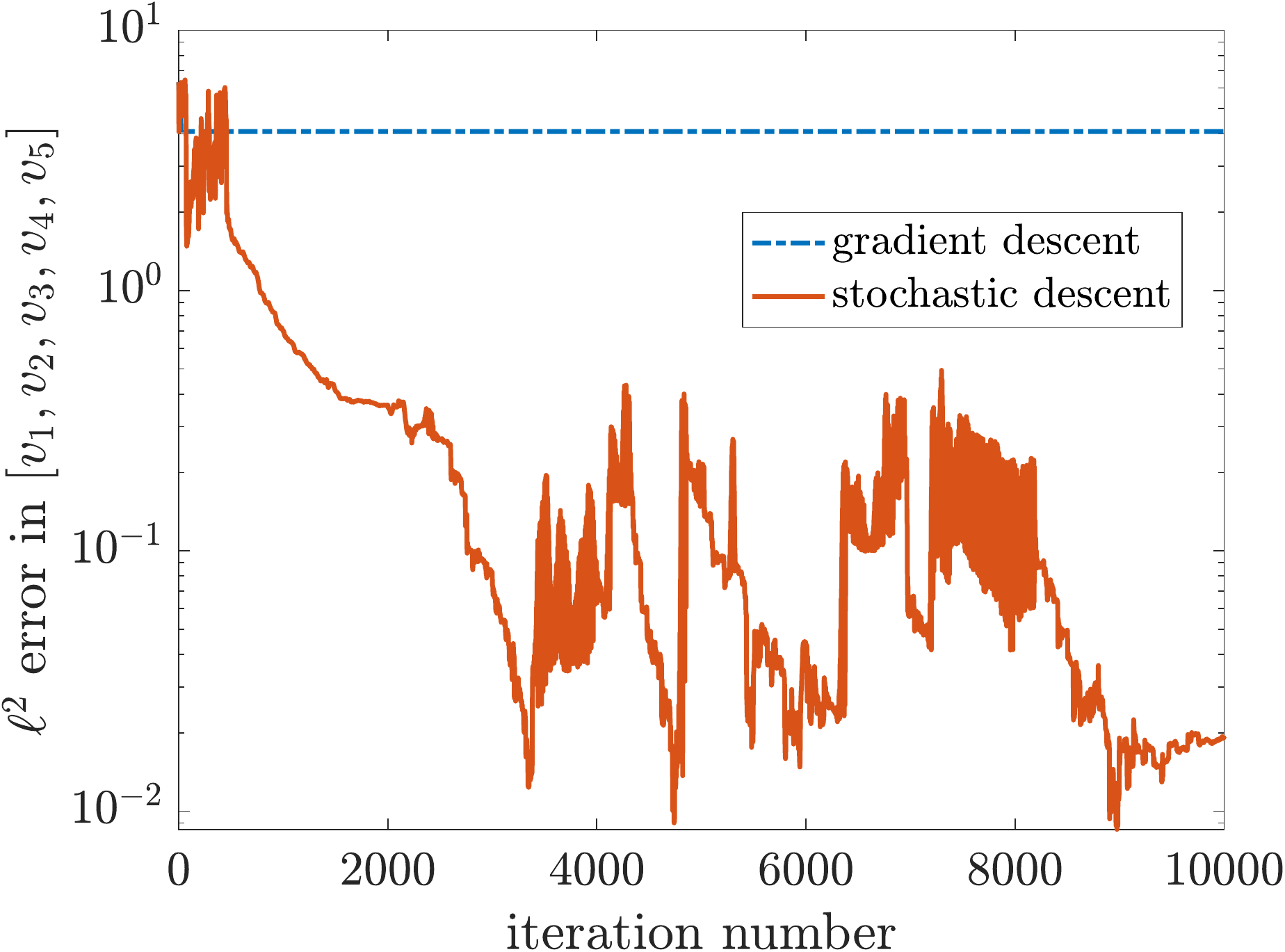}\label{fig:FWI-iter}}
\caption{Global optimization for FWI: the velocity parameterization with $10$ unknowns (left), the objective function value decay (middle), and the convergence history of $[v_1,\ldots,v_5]$ (right) using the 
proposed stochastic algorithm and the standard gradient descent algorithm.\label{fig:FWI-history}}
\end{figure}

This method improves the convergence rate over~\eqref{EQ:Adaptive A} significantly, particularly in higher dimensions, as seen in the numerical experiments above. The original algorithm~\eqref{EQ:Adaptive A} does not suffer from the curse of dimensionality in the same way as in standard quadrature and PDE
methods for which the discretization is done dimension by dimension. The algorithm here still
shows severe degradation in modestly higher dimensions because $\beta$ in~\eqref{EQ:Sigma} depends on $d$ in determining the noise power $\sigma(f)$. The convergence of the classical gradient
descent method is essentially independent of dimensional degradation. Thus, it is natural
to add gradient information  such as~\eqref{eq:discrete_SGD} to have a practical algorithm.

\begin{remark}\label{rem:beta_grad}
Here, we comment that the choice of $\beta$ differs from our earlier discussions when the (approximated) gradient is present. The $\beta$ values that give the best convergence for our derivative-free method are quite large; see~\Cref{thm:main}. With explicitly adding the (approximated) gradient, the stochastic term with a large $\beta$ is then too weak (given the fact that $f(x)-f_{\min}^* \in [0,1]$ in our test cases) to escape a local minimum and overcome the adverse gradient in a reasonable time. This is particularly the case when the objective function value at the local minimum is close to the estimated optimum value $f_{\min}^*$. A smaller $\beta$ naturally implies more noise based on the standard deviation $\sigma \sim |f(x)-f_{\min}^*|^{\beta/2}$ when $|f(x)-f_{\min}^*|\leq 1$, thereby increasing the probability of escape. Without the (approximated) gradient, the condition to ensure convergence in probability is that $\beta > d/2$; see~\Cref{coro:convergence in p}. The same condition does not carry over to the case when the (approximated) gradient is present.
\end{remark}

We can think of our estimated gradient $\overline G$ as a noisy version of the true gradient, i.e., $\overline G\approx \nabla f +\xi_k$. As a result, the iteration~\eqref{eq:discrete_SGD} with a constant $\sigma$ converges, in probability, to the global minimizer of $f$ under the right scaling (which essentially is that $\eta_g \sim 1/k$ and $\eta\sim 1/(k\log \log k)$). This can be shown with a slight modification of the techniques from~\cite{GeMi-SIAM91}, with minor additional assumptions on $f$. 

\section{Revisiting the AdaVar Stochastic Gradient Descent Algorithm in\texorpdfstring{~\cite{EnReYa-arXiv22}}{}}
\label{SEC:compare}

The basic concept of adding a stochastic term in the optimization algorithm with the variance of that term
being state-dependent was already introduced in~\cite{EnReYa-arXiv22}. There are two main differences between our earlier paper~\cite{EnReYa-arXiv22} and this work. One is  obviously that the earlier algorithm explicitly included the gradient, and the main proposal in this paper is derivative-free. In this paper, the step from $X_n$ to $X_{n+1}$ is here taken at a uniformly random angle (due to the isotropic Gaussian noise in~\eqref{EQ:Adaptive A}). The optimization landscape must be explored in a sequence of many steps to find a descent direction, and our analysis in this work can, therefore, not be done in a Markovian way on the discrete level based on worst-case scenarios, which was done in~\cite{EnReYa-arXiv22}. We need the probability distribution of $X_n$ here, and it is, therefore, natural to study the continuum limit in the form of the Fokker--Planck equation~\eqref{EQ:Fokker--Planck}. This is
common in convergence analysis; see, for example, \cite{GeHw-SIAM86}. 

We also comment that the convergence proof of~\cite{EnReYa-arXiv22} will not work without the gradient. The proof of Property One will not be affected~\cite[Sec.~3.1]{EnReYa-arXiv22} since it did not use the gradient explicitly, which, however, is essential in the proof of Property Two~\cite[Sec.~3.2]{EnReYa-arXiv22}. Without the gradient, the worst-case scenario in~\cite{EnReYa-arXiv22} will generate a very high probability for the iterate $X_n$ to escape the set $\Omega_n = \{{\bf x}\in\Omega: f({\bf x}) \leq f_n\}$, which is endowed with small noise variance. The discrete Markovian-style analysis will then not work. We have to use the history of $X_n$-values and the related probability density function to show that such worst-case scenarios have a small probability. 

The other difference is the choice of the adaptive state-dependent noise term $\sigma(f)$. In~\cite{EnReYa-arXiv22}, it was a piecewise-constant step function based on the value $f({\bf x})$: 
\begin{equation}\label{eq:two-stage-sigma}
 \sigma_n(f(X_n)) =    
 \begin{cases}
  \sigma_n^-, & f(X_n) \leq f_n,\\
   \sigma_n^+, & f(X_n) > f_n,
    \end{cases}
\end{equation}
where $f_n$ is a cut-off function decaying in $n$ towards $f(\bx_*)$. This was useful in its simplicity both for the analysis and in producing practical convergence. 
Without gradient information, noisy iterates driven by a constant variance will have a long hitting time to reach a close neighborhood of a global minimum. See the comments above and also in~\cite{EnReYa-arXiv22} for the importance of the gradient in this phase of the algorithm, i.e., Property Two. Using a $\sigma(f)$, which is a strictly monotone function of the objective function value $f(\bx)$, will implicitly exploit gradient information over a sequence of steps throughout the full domain $\Omega$. In this work, we indeed use such a regular monotone function of $|f({\bf x})-f_{\min}^*|$; see~\eqref{EQ:Sigma}. 
The particular choice of a regular variance function in this paper fits nicely into the analysis of the Fokker--Planck equation. From a practical point of view, the advantage of the currently proposed variance is the implicit encoding of the gradient, as remarked on in~\eqref{eq:grad-FK}. The high-dimensional example in the earlier paper~\cite[Sec.~5.1.2]{EnReYa-arXiv22} showed a much faster convergence into the basin of attraction of the global minimum than the result that a uniform sampling would have given. The gradient was a key in guiding the sequence of $X_n$-values closer to the optimum. If a step function-type variance is used in the derivative-free setup, we will only rely on uniform sampling to find the domain close to the optimum, resulting in very slow convergence.

It is natural to ask how a monotone $\sigma(f)$ will do in the gradient descent algorithm of~\cite{EnReYa-arXiv22} and
how the piecewise-constant variance~\eqref{eq:two-stage-sigma} works without gradients in the framework of~\eqref{EQ:Adaptive A}. The numerical examples below will shed light on these questions.

For numerical comparisons between the step function-based variance~\eqref{eq:two-stage-sigma} proposed in~\cite{EnReYa-arXiv22} and the continuous variance~\eqref{EQ:Sigma} proposed in this work, we will first show when the gradient is present (that is, the approximated gradient in~\eqref{eq:estimate grad} is replaced by the real one), how the two strategies perform in global optimization. The test case is the 2D Rastrigin function in~\cite[Eqn.~(5.1)]{EnReYa-arXiv22}. In~\Cref{fig:2D var comp}, we observe that for both cases of  $c=0.01$ and $c=0.05$, iterates driven by the continuous variance (labeled as ``cont-var'') converge much faster than the step function-based variance (labeled as ``bi-var'') in the plot. We applied the same strategy in choosing the hyper-parameters in the SGD algorithm with the step function-based variance~\eqref{eq:two-stage-sigma} as in~\cite{EnReYa-arXiv22}. 

\begin{figure}[!htb]
    \centering
    \includegraphics[width = 1.0\textwidth]{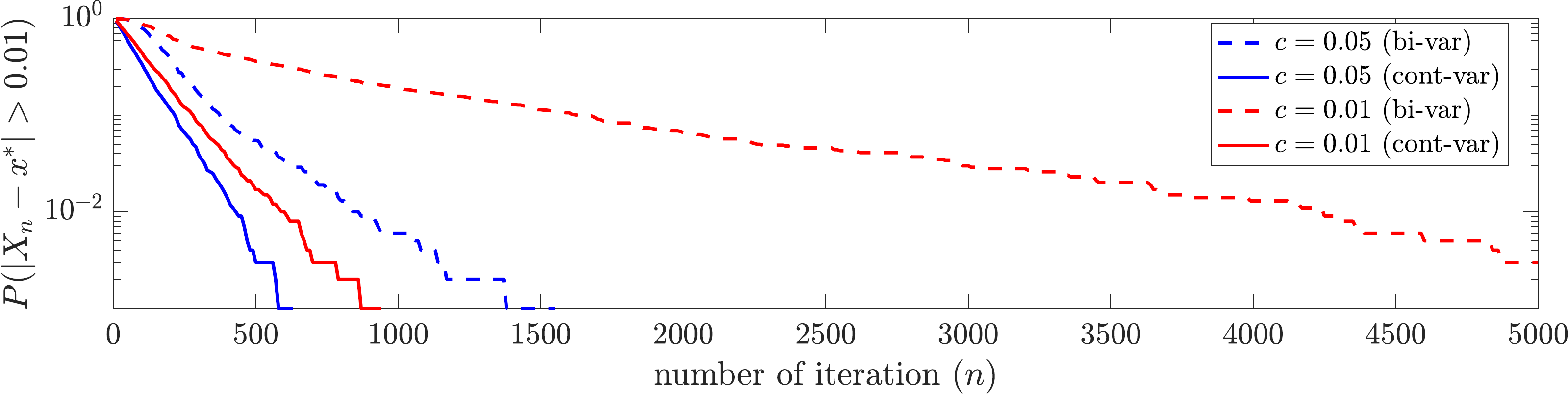}
    \caption{Semilog-y plots of convergence performance for stochastic gradient descent with step function-based variance~\eqref{eq:two-stage-sigma} and the continuous variance~\eqref{EQ:Sigma}. The statistics are estimated by $10^3$ i.i.d.~runs.}
    \label{fig:2D var comp}
\end{figure}

On the other hand, we can remove the gradient component in the SGD algorithm in~\cite{EnReYa-arXiv22}, leaving only the zero-mean noise with the two-stage variance~\eqref{eq:two-stage-sigma} controlling the trajectory of the iterate $X_n$. As mentioned earlier, the same proof~\cite{EnReYa-arXiv22} will not go through. We can also observe the difficulty in convergence from numerical tests. Consider the objective function~\eqref{eq:J} in 2D. The trajectory of one run using the continuous variance was shown earlier in~\Cref{fig:2D epsilon compare 1}. Similarly, we plot the trajectory of one run with the two-stage variance~\eqref{eq:two-stage-sigma} in~\Cref{fig:2D bi-var no grad}. The iterate $X_n$ has been close to the global minimum ${\bf x}_*=[2,2]^\top$ many times in the trajectory history but has also escaped shortly after. As the sublevel set $\Omega_n = \{{\bf x}:f({\bf x}) \leq f_n\}$ shrinks with $f_n$ decreasing, the ``leaving'' probability $\mathbb{P}(X_{n+1} \not\in \Omega_n |X_n \in \Omega_n)$ might be large as the volume of $\Omega_n$ decreases while the ``entering'' probability $\mathbb{P}(X_{n+1} \in \Omega_n |X_n \not\in \Omega_n)$ becomes smaller again due to the decreasing volume of $\Omega_n$. The fact that the ratio between these two conditional probabilities goes to zero was the key in the proof of~\cite{EnReYa-arXiv22}, which was driven by the gradient term. Without the gradient term, the same analysis does not apply.

\begin{figure}[!htb]
    \centering
\includegraphics[height = 5.5cm]{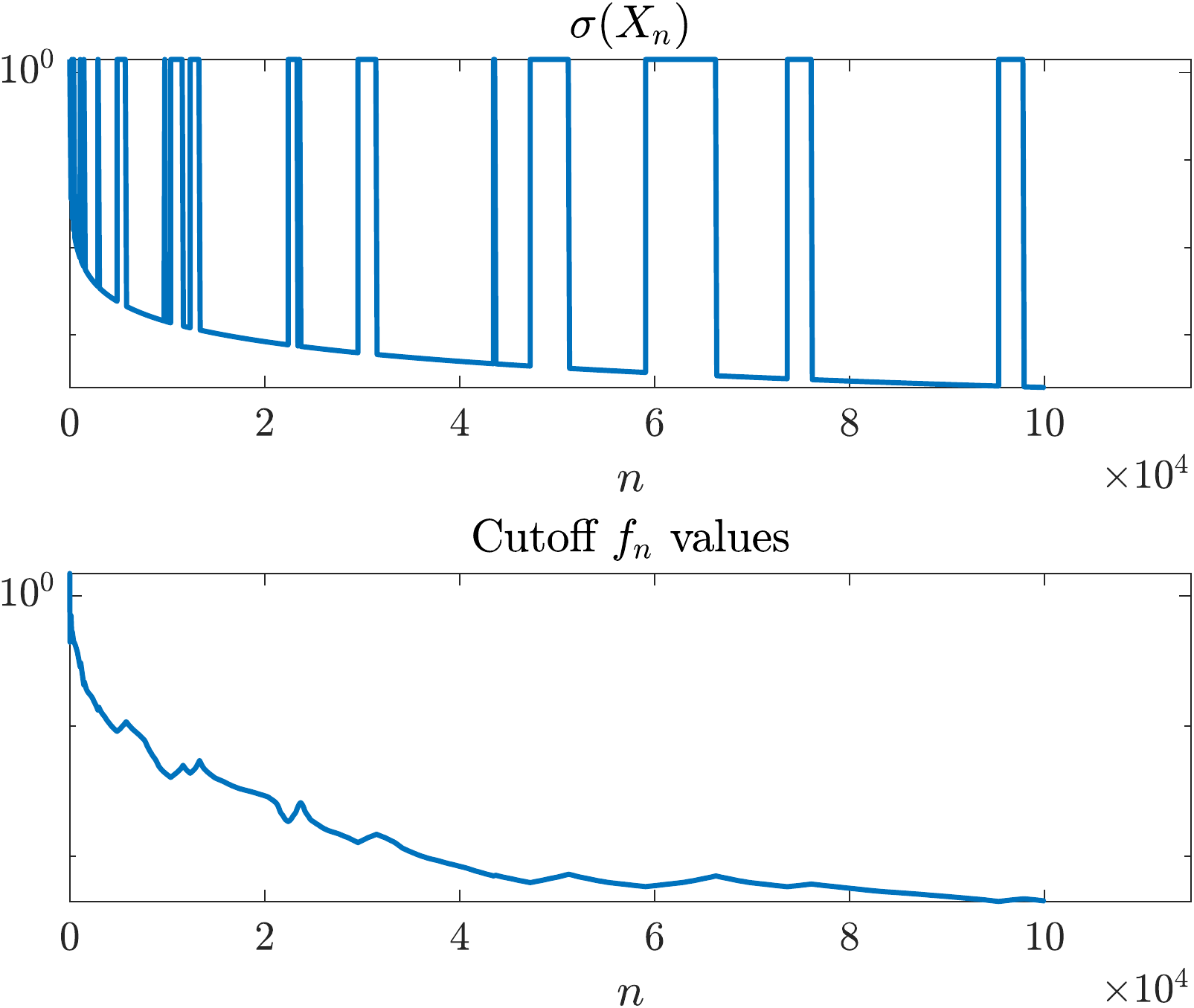}
\hspace{0.5cm}
\includegraphics[height = 5.5cm]{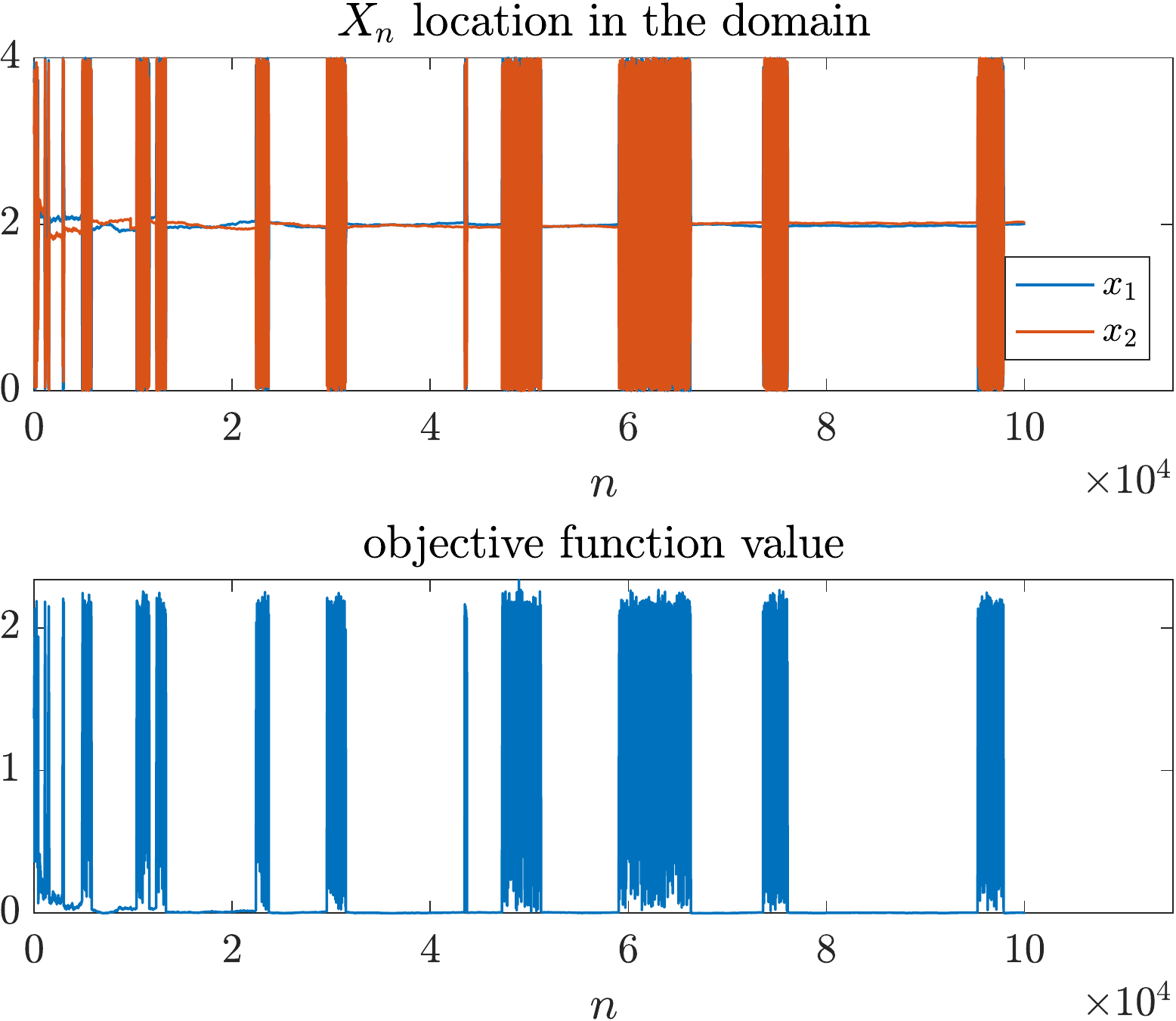}
    \caption{Stochastic descent without gradient driven by the two-stage variance~\eqref{eq:two-stage-sigma} for the 2D objective function~\eqref{eq:J}. Top left: effective variance $\sigma(X_n)$ at the $n$-th iteration; Bottom left: the cut-off value $f_n$ in~\eqref{eq:two-stage-sigma}; Top right: locations of $X_n$; Bottom right: the objective function value $f(X_n)$.}
    \label{fig:2D bi-var no grad}
\end{figure}

\section{Concluding remarks}
\label{SEC:Concl}

We have presented analysis and computational evidence to demonstrate the efficiency of an adaptive variance selection scheme for derivative-free optimization. While our theoretical justification is in the asymptotic regime, numerical simulations with the discrete algorithm show that the method works remarkably well in more challenging settings, for example, when the true value of the global minimum of the objective function is unknown.
 
The main difference between this contribution and other derivative-free methods is the rigorous analysis of global convergence with the algebraic rate, even in the case of no explicit gradient approximation. There are also several differences between the current work and our previous paper~\cite{EnReYa-arXiv22} in which a discrete version was studied. First, in paper~\cite{EnReYa-arXiv22}, the proof of Property Two does not work without gradient information. Second, the probability distribution is needed in this work, and it depends on the objective function value of the iterate. The proof in~\cite{EnReYa-arXiv22} is instead based on the discrete algorithm. Several interesting theoretical issues remain to be addressed, including the convergence of the algorithm in the case of $\eps = 0$, having $\Omega = \mathbb{R}^d$, and including the estimation of $f_{\min}$ in the analysis. There are also more practical issues as, for example, the best choice of gradient approximation and involving parallel sequences of $X_n$ in the optimization. We leave those to future works.






\section*{Acknowledgments}

We are grateful for the valuable discussions with Professor Linan Chen (McGill University) and Professor Panagiotis E.~Souganidis (University of Chicago) for constructive discussions. 

This work is partially supported by the National Science Foundation through grants DMS-2208504 (BE), DMS-1913309 (KR), DMS-1937254 (KR), and DMS-1913129 (YY). YY acknowledges support from Dr.~Max R\"ossler, the Walter Haefner Foundation, and the ETH Z\"urich Foundation. 

\appendix

\section{Weighted Poincar\'e inequality}
\label{SEC:WPI}
One key component in our analysis is the weighted Poincar\'e inequality with a given weight function $w(\bx):\Omega \mapsto [0, \infty)$. To increase the readability of our proof, we recall here the inequality. The material here is standard and can be found in the references cited. For a general weight function $w(\bx)$,  we first introduce the Muckenhoupt
$A_p$ weights.
\begin{definition}[$A_p$ weights]\label{DEF:Ap}
For a fixed $1 < p < \infty$, we say that a weight function $w:\R^d \mapsto [0, \infty)$ belongs to the class $A_p$ if $w$ is locally integrable, and for all cubes $Q\subset \R^d$, we have
\begin{equation}
[w]_p : = \sup_{Q\subset \R^d} \left({\frac {1}{V_Q }}\int _{Q}w (\bx)\,d\bx\right)\left({\frac {1}{V_Q}}\int _{Q}w (\bx)^{-{\frac {q}{p}}}\,d\bx\right)^{\frac {p}{q}} <\infty \,,
\end{equation}
where $q$ is a real number such that $\frac{1}{p} + \frac{1}{q} = 1$, and $V_Q$ is the volume of the cube $Q$.
\end{definition}

The following weighted Poincar\'e inequality for weights in the $A_p$ class can be found in~\cite[Proposition~11.7]{ perez2019degenerate}.
\begin{theorem}[Weighted Poincar\'e inequality~\cite{perez2019degenerate}]\label{thm:wpi}
Let $w$ be an $A_p$ weight function and $f(\bx)$ a Lipschitz function. Then the following weighted Poincar\'e inequality holds for the hypercube $\Omega \subset \R^d$:
\begin{equation} \label{eq:wpi}
\frac{1}{w(\Omega)} \int_\Omega |f-f_{\Omega,w}|^p  w\, d\bx \leq\frac{ 2^p }{w(\Omega)} \int_\Omega |f-f_{\Omega}|^p  w\, dx \leq  \frac{C_d^p\, \ell_\Omega^p\, [w]_p\, }{w(\Omega)} \int_\Omega |\nabla f|^p w\, d\bx\,,
\end{equation} 
where $w(\Omega) = \dint_\Omega w(\bx) d\bx$,  $f_{\Omega,w} = \frac{1}{w(\Omega)} \dint_\Omega f(\bx) w(\bx) d\bx$, $f_{\Omega} = \frac{1}{V_\Omega} \dint_\Omega f(\bx) d\bx$, $\ell_\Omega$ is the side length of the cube $\Omega$, and $C_d$ is a dimensional constant.  
\end{theorem}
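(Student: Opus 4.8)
The plan is to prove the two inequalities in~\eqref{eq:wpi} separately: the left one is elementary (it only compares the weighted mean $f_{\Omega,w}$ to the ordinary mean $f_\Omega$), while the right one is the genuine weighted Poincar\'e estimate where the $A_p$ structure enters.

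\textbf{Left inequality.} The claim is that swapping $f_{\Omega,w}$ for $f_\Omega$ costs at most a factor $2^p$. By the triangle inequality in $L^p(w\,d\bx)$,
\[
\Big(\int_\Omega |f-f_{\Omega,w}|^p w\, d\bx\Big)^{1/p} \leq \Big(\int_\Omega |f-f_{\Omega}|^p w\, d\bx\Big)^{1/p} + |f_\Omega - f_{\Omega,w}|\, w(\Omega)^{1/p}.
\]
Since $\frac{1}{w(\Omega)}w\,d\bx$ is a probability measure, Jensen's inequality gives $|f_{\Omega,w}-f_\Omega| = \big|\frac{1}{w(\Omega)}\int_\Omega (f-f_\Omega) w\,d\bx\big| \leq \big(\frac{1}{w(\Omega)}\int_\Omega |f-f_\Omega|^p w\,d\bx\big)^{1/p}$, so the second term on the right is itself bounded by $\big(\int_\Omega |f-f_\Omega|^p w\,d\bx\big)^{1/p}$. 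Adding and raising to the $p$-th power produces exactly the factor $2^p$.

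\textbf{Right inequality.} First I would establish a pointwise sub-representation on the convex cube $\Omega$. Writing $f(\bx)-f_\Omega = \frac{1}{V_\Omega}\int_\Omega \big(f(\bx)-f(\by)\big)\,d\by$ and integrating $\nabla f$ along the segment joining $\by$ to $\bx$ (which stays in $\Omega$ by convexity), a change of variables and Fubini yield the classical Riesz-potential bound
\[
|f(\bx)-f_\Omega| \leq C_d\int_\Omega \frac{|\nabla f(\by)|}{|\bx-\by|^{d-1}}\,d\by =: C_d\,(I_1 g)(\bx),\qquad g:=|\nabla f|.
\]
The problem thus reduces to the weighted bound $\int_\Omega (I_1 g)^p\, w\,d\bx \leq C_d^p\,\ell_\Omega^p\,[w]_p\int_\Omega g^p\, w\,d\bx$, which is exactly the constant claimed in~\eqref{eq:wpi}. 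To prove it I would split the potential into dyadic annuli $\{2^{-k-1}\ell_\Omega \le |\bx-\by| < 2^{-k}\ell_\Omega\}$ about $\bx$; summing the resulting geometric series gives the truncation estimate $(I_1 g)(\bx)\lesssim \ell_\Omega\,(Mg)(\bx)$ with $M$ the local Hardy--Littlewood maximal operator, after which one controls $\int_\Omega (Mg)^p\, w\,d\bx$ using that $w\in A_p$.

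The main obstacle is quantitative rather than structural. The qualitative boundedness of $M$ (or of $I_1$) on $L^p(w)$ for $A_p$ weights is classical, but routing through the maximal operator carries a strictly worse power of $[w]_p$ than the \emph{linear} dependence asserted in~\eqref{eq:wpi}. Recovering the sharp constant forces one to estimate $I_1$ directly, invoking the $A_p$ balance $\big(\frac{1}{V_Q}\int_Q w\big)\big(\frac{1}{V_Q}\int_Q w^{-q/p}\big)^{p/q}$ exactly once per dyadic scale via a duality/testing argument; this sharp weighted estimate is the technical heart, and it is the part carried out in~\cite{perez2019degenerate} that I would cite rather than reproduce in full.
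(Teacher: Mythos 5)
The paper does not actually prove this statement: Theorem~\ref{thm:wpi} is imported verbatim from P\'erez and Rela~\cite{perez2019degenerate} (their Proposition~11.7), and the surrounding text explicitly says the material is standard and cited. So there is no internal proof to compare against, and your sketch should be judged on its own. On that basis it is correct and sensibly organized. The left inequality is complete as written: Minkowski's inequality in $L^p(w\,d\bx)$ together with Jensen's inequality for the probability measure $w(\Omega)^{-1}w\,d\bx$ bounds $|f_{\Omega,w}-f_\Omega|\,w(\Omega)^{1/p}$ by $\bigl(\int_\Omega|f-f_\Omega|^p w\,d\bx\bigr)^{1/p}$, and summing and raising to the $p$-th power gives exactly $2^p$. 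For the right inequality, the sub-representation $|f(\bx)-f_\Omega|\le C_d\,(I_1|\nabla f|)(\bx)$ on the convex cube is the standard entry point, and you are right to flag that the easy route (truncation $I_1 g\lesssim \ell_\Omega\,Mg$ plus Buckley's sharp bound for the maximal operator) only yields $[w]_p^{p/(p-1)}$ rather than the linear $[w]_p$ asserted in~\eqref{eq:wpi}. That distinction is not cosmetic in this paper: the linear dependence is what makes the Poincar\'e constant scale like $\eps^{-1}$ in~\eqref{eq:constant_our_setting}, which feeds directly into the $\eps^2$ in Lemma~\ref{lem:energy bound} and hence the exponent $\gamma$ in Theorem~\ref{thm:main}; a worse power of $[w]_2$ would degrade the admissible range of $\alpha$ and the convergence rate. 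Deferring only the sharp weighted Riesz-potential estimate to~\cite{perez2019degenerate} while supplying the elementary reductions yourself is a reasonable division of labor --- in effect you prove strictly more than the paper, which cites the whole theorem.
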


There have been many results on the weighted Poincar\'e inequality~\cite{fabes1982local,heinonen2018nonlinear}. The paper by P\'erez and Rela~\cite{perez2019degenerate} improved some of the classical results and produced a quantitative control of the Poincar\'e constant (see~\eqref{eq:wpi}) in the inequality, which is crucial for the analysis of our algorithm. We refer interested readers to~\cite{heinonen2018nonlinear,perez2019degenerate} for more general weighted Poincar\'e and Poincar\'e--Sobolev inequalities in various settings.

\begin{remark}
The definition of the $A_p$ class allows one to consider degenerate and singular weights. For example, let $w(\bx) = |\bx|^{\eta}$, $\bx\in\bbR^d$. Then $w\in A_p$ if and only if $-d< \eta < d(p-1)$.
\end{remark}

{\small
\bibliography{BIB-REN2}
\bibliographystyle{siam}
}

\end{document}